\documentclass[10pt]{article}
\usepackage{amsmath,amssymb,amsthm, epsfig}
\usepackage{amsfonts}
\usepackage{amsmath}
\usepackage{amssymb}
\usepackage{color}
\usepackage{palatino}
\usepackage[mathscr]{eucal}



\title{Optimal design problems in rough inhomogeneous media.
Existence theory\footnote{This work is partially supported by
National Science Foundation Grant DMS 0600930.}}

\author{Eduardo V. Teixeira}

\date{Rutgers University, \\
 Department of Mathematics, \\
 Piscataway, NJ 08854-8019 \vspace{.5cm}\\
}



\newlength{\hchng}
\newlength{\vchng}
\setlength{\hchng}{0.55in} \setlength{\vchng}{0.55in}
\addtolength{\oddsidemargin}{-\hchng}
\addtolength{\textwidth}{2\hchng}
\addtolength{\topmargin}{-\vchng}
\addtolength{\textheight}{2\vchng}



\def \R {\mathbb{R}}
\def \supp {\mathrm{supp } }
\def \div {\mathrm{div}}
\def \dist {\mathrm{dist}}
\def \redbdry {\partial_\mathrm{red}}

\def \suchthat {\ \big | \ }

\def \L {\mathfrak{L}}

\def \J {\mathfrak{J}}
\def \H {\mathcal{H}^{n-1}(X)}
\def \A {\mathcal{A}}
\def \Leb {\mathscr{L}^n}




\newtheorem{theorem}{Theorem}[section]
\newtheorem{lemma}[theorem]{Lemma}
\newtheorem{proposition}[theorem]{Proposition}

\theoremstyle{definition}

\theoremstyle{remark}

\numberwithin{equation}{section}




\newcommand{\intav}[1]{\mathchoice {\mathop{\vrule width 6pt height 3 pt depth  -2.5pt
\kern -8pt \intop}\nolimits_{\kern -6pt#1}} {\mathop{\vrule width
5pt height 3  pt depth -2.6pt \kern -6pt \intop}\nolimits_{#1}}
{\mathop{\vrule width 5pt height 3 pt depth -2.6pt \kern -6pt
\intop}\nolimits_{#1}} {\mathop{\vrule width 5pt height 3 pt depth
-2.6pt \kern -6pt \intop}\nolimits_{#1}}}



\begin{document}
\maketitle

\begin{abstract}
This paper settles the existence question for a rather general
class of convex optimal design problems with a volume constraint.
In low dimensions, we prove the existence of an optimal
configuration for general convex minimization problems ruled by
bounded measurable degenerate elliptic operators. Under a mild
continuity assumption on the medium, the free boundary is proven
to enjoy the appropriate weak geometry and we establish the
existence of an optimal design for general convex optimal design
problems with volume constraints for all dimensions.
\end{abstract}

\tableofcontents

\section{Introduction}

Well known for modeling important problems in applied mathematics,
respected for the challenging mathematical questions they give
rise to and admired for their intrinsic beauty, optimization
problems with volume constraints have received an overwhelming
attention in the past few decades. In general, the usual
techniques of the Calculus of Variations are not sufficiently
powerful, or even appropriate, to establish existence of optimal
configurations for those classes of problems. This fact has
inspired remarkable recent advances in a number of branches of
applied analysis in an attempt to develop the right set of
analytical and geometrical tools to study optimal design problems
with volume constraints.
\par
One of the fundamental motivations of this present work can be, in
its most basic form, stated as follows: given an $n$-dimensional
body and a fixed amount of insulating material, what is the best
way of insulating it? Depending on the flexibility allowed, the
mathematical set-up used to model this classic question can also
be employed in the analysis of a variety of other problems in
applied mathematics. In more precise mathematical terms, but still
using the language of heat conduction, the above question takes
the following form: let $D$ be a fixed Lipschitz bounded domain in
$\R^n$ (the body to be insulated), $\varphi \colon \partial D \to
\R$ be a prescribed positive function (the temperature
distribution on $D$), and $\iota > 0$ be a given positive number
(the amount of insulating material available). For each
configuration $\Omega$ that surrounds $D$ and obeys $\Leb (\Omega
\setminus D) \le \iota$, we compute the flux associated to it:
$$
    \Omega  \mapsto \J (\Omega).
$$
In general, $\J$ is related to a boundary integral involving a
potential $u_\Omega$, linked to $\Omega$ by a prescribed PDE. The
optimal design problem is then
\begin{equation}\label{General Optimization Prob. INTROC}
    \textrm{Min } \left \{ \J(\Omega) \suchthat \Omega \subset D \quad \textrm{ and
    } \quad
    \Leb (\Omega \setminus D) \le \iota \right \}.
\end{equation}
Probably the first and still one of the most influential works in
this line of research is the pioneering article of Aguilera, Alt
and Caffarelli, \cite{AAC}. In this paper, the authors address the
question of minimizing the Dirichlet integral when prescribed the
volume of the zero set. More precisely, they study the
optimization problem
\begin{equation}\label{Prob studied in AAC}
    \textrm{Min} \quad  \left \{ \int |\nabla u|^2dX
    \suchthat u \in H^1(\Omega),\quad u = \varphi \ge 0 \textrm{
    on } \partial \Omega \quad \textrm{ and } \quad \Leb \left ( \{ u = 0\}
    \right ) = \alpha \right \},
\end{equation}
for a fixed $\alpha < \Leb (\Omega)$. In the case of an exterior
domain, $\Omega = \R^n \setminus D$, problem (\ref{Prob studied in
AAC}) can be used to model a very simple, yet interesting optimal
design problem with volume constraint as stated above. Namely,
suppose $D$ is evenly heated. If one tries to minimize the heat
flux given by $\int_{\partial \Omega} u_\mu d\H$, where $u$ is the
capacity potential associated to $\Omega$,  with $\Leb \left
(\Omega \setminus D \right )$ prescribed, a simple application of
Green's identity reveals that the heat flux equals the Dirichlet
integral, and therefore the problem becomes identical to
(\ref{Prob studied in AAC}). Fine regularity properties of the
free boundary, $\partial \{u^\star > 0 \} \cap \Omega$, where
$u^\star$ is a minimizer of (\ref{Prob studied in AAC}) rely on
the powerful geometric-measure machinery developed by Alt and
Caffarelli in \cite{AC}: the \textit{magnum opus} of free boundary
regularity theory for variational problems.
\par
A significant generalization of  problem (\ref{Prob studied in
AAC}) was carried out by Lederman in \cite{Led}. In this paper,
the author studies the non-homogeneous minimization problem, that
is, the Dirichlet integral is replaced by $\int |\nabla u|^2dX -
\int g u$, for a given $g$ bounded away from zero.
\par
In an important paper, Ambrosio, Fonseca, Marcellini and Tartar,
\cite{AFMT}, address another major generalization of problem
(\ref{Prob studied in AAC}). Namely they establish the existence
of a minimizer to the functional ${\mathscr{F}}: = \int_\Omega
W(Du)dx$,  for $W \colon \R^{d\times n} \to (0,\infty)$ $C^1$ and
quasi-convex, with the multiple volume constraint
${\mathscr{L}}^n(\{u=z_i\})=\alpha_i, 0\leq i\leq k$. In a
subsequence article, Tilli, in \cite{Tilli}, showed, for $W(\xi)
:= |\xi|^2$, that in the case of just two level constraints, the
minimizers are locally Lipschitz continuous.
\par
Still assuming a constant temperature distribution, Oliveira and
the author in \cite{OT} studied the optimization problem
(\ref{General Optimization Prob. INTROC}), governed by the
$p$-Laplacian operator when the flux is given by $\J(u) :=
\int_{\partial \Omega} \left (u_\mu \right )^{p-1} d\H$. This
translates into the analysis of the minimization problem
(\ref{Prob studied in AAC}), for the $p$-Dirichlet integral, that
is, $W(\xi) = |\xi|^p$, for $p>1$.
\par
The first work to deal with optimal design problems with
non-constant temperature distribution $\varphi \colon \partial D
\to (0,\infty)$ is \cite{ACS}. In this paper, the authors consider
the linear functional: $\J(\Omega) = \int \Delta u dX$, where $u$
is the harmonic function in $\Omega \setminus D$, taking boundary
data $\varphi$ on $\partial D$ and zero on $\partial \Omega$. Even
for this simple functional, major difficulties arise. For
instance, the free boundary condition, that is, the behavior of
$\nabla u^\star$ along the free boundary, $\partial \Omega^\star$,
is non-local and it required a new machinery to establish the
appropriate geometric-measure properties of the free boundary
necessary to perform suitable smooth perturbations. The latter is
used in its entirely to finally conclude the existence of an
optimal design.
\par
At least for smooth competing configurations, $\Omega$, for the
linear functional studied in \cite{ACS} we have
$$
    \J(\Omega) :=  \int \Delta u dX = \int_{\partial \Omega} u_\nu
    d\H = \int_{\partial D} u_\mu d\H.
$$
This is a na\"ive, yet important observation, as the latter
integral is taken over the fixed boundary. Therefore, at least in
an intuitive perspective, a non-linear theory for this class of
minimization problems should use $\int_{\partial D} u_\mu d\H$ as
its linear pattern. From the applied viewpoint, if one allows a
nonlinear flux, $\J$ that might also depend upon the local
structure of the boundary of the body $D$, i.e.,
\begin{equation}\label{General Nonlinear Functional - INTRO}
    \J(\Omega) := \int_{\partial D} \Gamma \big (X, u_\mu(X) \big ) d\H
\end{equation}
the mathematical model (\ref{General Optimization Prob. INTROC})
would address several other physical situations, such as: optimal
configurations in electrostatics, problems in material science,
flux dynamics, among many others. This nonlinear setting, however
still only for problems governed by the Laplacian operator, has
been studied by the author in \cite{Teix} and \cite{T3}.
\par
In this present paper, we settle the existence theory for optimal
design problem (\ref{General Optimization Prob. INTROC}) with
nonlinear functionals as in (\ref{General Nonlinear Functional -
INTRO}), when $u_\Omega$ is linked with $\Omega$ by a rather
general class of degenerate elliptic PDEs.  In terms of
applications, it greatly extends the range of physical systems
that can be modeled by this set-up. From the mathematical
viewpoint, this project brings a number of new rather challenging
difficulties in its analysis and modern solutions to various
issues commonly found in free boundary problems are developed
throughout the paper. Free boundary regularity theory for uniform
elliptic operators in divergence form with merely H\"older
continuous coefficients is currently being developed in order to
establish $C^{1,\gamma}$ smoothness of an optimal configuration,
up to a possible negligible singular set, \cite{Teix-Prep}.
\par
The article is organized as follows: in section \ref{SECTION -
Mathematical Set-up}, we describe all the mathematical elements
involved in the model and the optimization problem is accurately
stated in that section. Still in section \ref{SECTION -
Mathematical Set-up}, we introduce weak formulations of the
optimal design problem (\ref{General Optimization Prob. INTROC})
that are somewhat simpler to be tackled from the mathematical
perspective. Basic properties of the functional to be minimized
are established in section \ref{SECTION Basic functional and
analytic properties}. The first existence theorem for a weak
formulation of the original optimization problem is delivered in
section \ref{SECTION - Existence of minimizer to problem
Penalized}. In section \ref{SECTION - Existence up to dimension
p}, by letting the penalty term blow-up, we establish the
existence of an optimal configuration to the optimal design
problem with volume constraint (\ref{General Optimization Prob.
INTROC}) ruled by totally discontinuous degenerate elliptic
operators. For that though, a technical restriction on the
dimension is necessary. In section \ref{SECTION - Weak Geometric
Properties of the free boundary}, under $C^\epsilon$ regularity on
the medium, a series of results concerning the weak geometric
properties of the boundary of an optimal configuration to the weak
formulation of the original problem (\ref{General Optimization
Prob. INTROC}) are achieved. These are used in section
\ref{SECTION - Existence theory in any dimension} to ultimately
derive existence of an optimal configuration in all dimensions.

\section{Mathematical set-up} \label{SECTION - Mathematical
Set-up}

Throughout the paper, $D$ denotes a fixed Lipschitz bounded domain
in $\R^n$, $\varphi \colon \partial D \to \R$ is a prescribed
positive function and $\iota
> 0$ is a given positive number. Our medium deformation will be expressed by $\A \colon
D^C \times \R^n \to \R^n$, a measurable $p$-degenerate elliptic
map, that is,
\begin{description}
    \item[(a)] for each $\xi \in \R^n$, the mapping $X \mapsto
    \A(X, \xi)$ is measurable.
    \item[(b)] For a.e. $X \in D^C$, the mapping $\xi \mapsto  \A(X, \xi)$ is
    continuous.
    \item[(c)] There exists constants $0<\lambda \le \Lambda < \infty$ and a $p>1$, such
    that, for a.e. $X\in D^C$ and all $\xi \in \R^n$,
        \begin{description}
            \item[(i)] $\A(X,\xi) \cdot \xi \ge \lambda |\xi|^p$,
            \item[(ii)] $|\A(X,\xi)| \le \Lambda |\xi|^{p-1}$,
            \item[(iii)] $\langle \A(X,\xi_1) - \A(X,\xi_2), \xi_1
            - \xi_2 \rangle > 0$, whenever $\xi_1 \neq \xi_2$ and
            \item[(iv)] $\A(X, \alpha \xi) = \alpha |\alpha|^{p-2}
            \A(X,\xi)$.
        \end{description}
\end{description}
A typical example to keep in mind is
$$
    \A(X,\xi) = A(X) |\xi|^{p-2} \xi,
$$
with $\A$ bounded measurable, which gives rise to the theory of
optimal shape problems governed by the $p$-Laplacian in a totally
discontinuous medium.
\par
Our optimization problem is then formulated as follows: for each
domain $\Omega \subset D$ satisfying
\begin{equation}\label{volume constraint}
    \Leb \left ( \Omega \setminus D \right ) \le \iota,
\end{equation}
we consider the $\A$-potential, $u = u(\Omega)$, with the
prescribed boundary value $\varphi$ on the fixed boundary
$\partial D$, associated to $\Omega$, i.e. the unique solution to
\begin{equation}\label{A potential}
\left \{
\begin{array}{rll}
\L u := \div \big ( \A(X, Du) \big ) &=& 0 \textrm{ in } \Omega \setminus D \\
u & = & \varphi \textrm{ on } \partial D \\
u &=& 0 \textrm{ on } \partial\Omega
\end{array}
\right.
\end{equation}
and compute
$$
    \J(\Omega) := \displaystyle \int_{\partial D} \Gamma \left ( X, \partial_\A u(X) \right )
    d\H \quad \textrm{ (the flux: quantity to be
    minimized)}.
$$
Here $\Gamma \colon \partial D \times \R \to \R$ is a given
function, whose properties will be described soon, and
\begin{equation}\label{A-derivative}
    \partial_\A u(X) := \left \langle \A(X, \nabla u(X)) , \mu(X) \right \rangle
\end{equation}
where $\mu$ denotes the inward normal vector defined
$\mathcal{H}^{n-1}$ a.e. on $\partial D$. The optimal design
problem we are interested in is the following:
\begin{equation}\label{Minimization Problem}
    \textrm{Minimize } \quad \Big \{ \J(\Omega) \suchthat \Omega \supset D \textrm{ and
    }\Leb \left ( \Omega \setminus D \right ) \le \iota \Big \}.
\end{equation}
\par
The analytical (and naturally mild) properties assumed on the
nonlinearity $\Gamma$ are:
\begin{enumerate}
\item For each $X \in \partial D$ fixed, $\Gamma(X,\cdot)$ is
convex and increasing.

\item For each $t \in \mathbb{R}$ fixed, $\partial_t \Gamma(\cdot,t)$ is
continuous.

\item If $\Gamma(X_0,t_0) = 0$ then $\Gamma(Y,t_0) = 0~\forall Y \in \partial D$; otherwise
$
    \dfrac{\Gamma(Y,t)}{\Gamma(X,t)} \le L,
$ for a universal constant $L>0$.
\end{enumerate}
\par
Notice that from 1 the following coercivity condition holds:
\begin{equation}\label{coersivity}
\lim\limits_{t\to +\infty} \int_{\partial D} \Gamma(X,t)d\H =
+\infty.
\end{equation}
\par
If $\psi$ is a positive continuous function defined on $\partial
D$ and $\gamma$ is a increasing convex function, then
$$
    \Gamma(X,t) = \psi(X)\gamma(t)
$$
gives a typical nonlinearity that fulfils the above properties. As
in the Calculus of Variations, $\Gamma$ is chosen based upon the
particular problem we are trying to model and no relation
whatsoever is imposed upon the nonlinearity $\Gamma$ and $\A$.
\par
Sometimes it is convenient to use the language of heat conduction
theory to describe the elements involved in our analysis. Thus,
$D$ is the body to be insulated, $\varphi$ represents the
temperature distribution on $\partial D$, $\iota$ corresponds to
the maximum amount of insulating material available, $\J$ plays
the role of the (generalized) heat flux, which is the quantity to
be minimized, and $\A$ determines the inhomogeneous and complexity
features of the medium. However it is important to highlight that
this model is widely applicable to several other situations beyond
the bounds of the classical heat conduction theory and other
interpretation of the model might provide different insights on
what is reasonable to expect to hold.
\par
It is noteworthy to point out that, since we are not forcing any
regularity assumption on the medium $\A$, in principle just
H\"older continuity estimates are available for an $\A$-potential
$u = u(\Omega)$. Thus, the $\A$-normal derivative of $u$,
$\partial_\A u$, as entitled in (\ref{A-derivative}) is not
properly defined. Some of our primary results concerning geometric
properties of the free boundary will not depend upon any
smoothness condition on the medium. However, just to grapple with
this technical inconsistence, we will assume throughout the paper
that there exists a small $1>\!\!> \delta_0 > 0$, such that
\begin{equation} \label{Continuity of A close to D}
    \begin{array}{l}
        \textrm{(i) } \A \textrm{ is H\"older (or even only Dini) continuous  in }
        D_{\delta_0} := \left \{ X \in \R^n \suchthat \textrm{dist}(X,~\partial D) < \delta_0
        \right \}
        \textrm{ and } \\
        \textrm{(ii) } \varphi\colon \partial D \to \mathbb{R} \textrm{ is accordiantly smooth}.
    \end{array}
\end{equation}
Once more we emphasize that for the first part of this project,
condition (\ref{Continuity of A close to D}) plays merely a
technical role and, for sake of applications, it should not be
seen as a constraint.
\subsection{Penalty Method and weak formulation}

From the mathematical point of view, the minimization problem
(\ref{Minimization Problem}) carries too many difficulties to be
approached directly. Instead, we will employ a fruitful penalty
method in order to formulate weak versions of problem
(\ref{Minimization Problem}). Such a technique has been
successfully employed to study a variety problems in applied
mathematics.
\par
The intuitive idea behind a penalization strategy is the
following: suppose our problem has an ``undesired" (from the
mathematical perspective) constraint on the competing
configurations (in our case a volume constraint). We then allow
any configuration to compete; however we ``charge a fee" for those
configuration that do not obey the previously set constraint. We
expect that, if the fee is too high, optimal configurations will
indeed prefer to satisfy the original constraint.
\par
Still in a philosophical perspective, one should expect that an
optimal configuration, $\Omega^{\star}$, of problem
(\ref{Minimization Problem}) satisfies
$$
   \Leb \left ( \Omega^{\star} \setminus D \right ) = \iota.
$$
For that, think of $\iota$ as the budget available and
$\Omega^{\star}$ as the ultimate object to be built up.
Mathematically, this fact is indeed easily justified. For instance
suppose, for an optimal configuration $\Omega^{\star}$, we had
$$
    \Leb \left ( \Omega^{\star} \setminus D \right ) < \iota - \varepsilon,
$$
for some $\varepsilon > 0$. Let $X_0 \in \partial \Omega^{\star}$
be a free boundary point and $\rho > 0$ so that $\omega_n \rho^n <
\varepsilon$. Consider
$$
    \tilde{\Omega} := \Omega^{\star} \cup B_\rho(X_0).
$$
Thus, $\tilde{\Omega}$ competes with $\Omega^\star$ in the
minimization problem (\ref{Minimization Problem}) and, because of
maximum principle, $u  ( \tilde{\Omega} ) >  u ( {\Omega^\star})$.
Taking into account that $\Gamma$ is increasing and applying Hopf
maximum principle on $\partial D$, we would conclude
$$
    \J(\Omega^\star) > \J(\tilde{\Omega}),
$$
which contradicts the minimality property of $\Omega^\star$.  Our
conclusion is that in problem (\ref{Minimization Problem}) we can
regard the condition $\Leb ( \Omega \setminus D)  \le \iota$ as
$\Leb (\Omega \setminus D) = \iota$. For future reference, let us
state this as a Lemma.
\begin{lemma}\label{Lemma 1 - less or equal replaced by equal} Let
$\Omega^\star$ be a minimizer of problem (\ref{Minimization
Problem}). Then $\Leb (\Omega^\star \setminus D ) = \iota$.
\end{lemma}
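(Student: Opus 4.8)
The plan is to make rigorous the comparison/perturbation argument sketched just above the statement, whose only real content is two analytic facts: a comparison principle for the degenerate operator $\L$ and a Hopf-type boundary estimate near $\partial D$. Arguing by contradiction, suppose $\Leb(\Omega^\star\setminus D)<\iota$ and fix $\varepsilon>0$ with $\Leb(\Omega^\star\setminus D)<\iota-\varepsilon$. First I would fix a free boundary point $X_0\in\partial\Omega^\star$ lying at positive distance $d_0:=\dist(X_0,D)>0$ from $D$; such a point exists, for otherwise $\partial\Omega^\star\subset\partial D$, which (together with $\Omega^\star\supset D$) forces $\Omega^\star\setminus\overline D$ to be either empty --- incompatible with the boundary conditions $u=\varphi>0$ on $\partial D$ and $u=0$ on $\partial\Omega^\star$ in (\ref{A potential}) --- or to contain the unbounded component of $D^C$, violating the volume bound. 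Picking $0<\rho<\min\{d_0,(\varepsilon/\omega_n)^{1/n}\}$ and setting $\tilde\Omega:=\Omega^\star\cup B_\rho(X_0)$, we get $B_\rho(X_0)\cap D=\emptyset$, hence $\Leb(\tilde\Omega\setminus D)\le\Leb(\Omega^\star\setminus D)+\omega_n\rho^n<\iota$; thus $\tilde\Omega$ is an admissible competitor in (\ref{Minimization Problem}), with $\tilde\Omega\setminus D\supsetneq\Omega^\star\setminus D$.

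Next I would compare $v:=u(\tilde\Omega)$ with $u:=u(\Omega^\star)$ on the common region $\Omega^\star\setminus D$: both solve $\L w=0$ there with $w=\varphi$ on $\partial D$, while on $\partial\Omega^\star$ one has $u=0\le v$, so the weak comparison principle attached to the monotone map $\A$ (condition (c)(iii)) gives $v\ge u$ in $\Omega^\star\setminus D$. The two potentials genuinely differ --- near $X_0$ one has $v>0=u(X_0)$ --- so the strong maximum principle applied to $v-u\ge0$ forces $v>u$ in the interior. In the collar $D_{\delta_0}\cap(\Omega^\star\setminus D)$, where by (\ref{Continuity of A close to D}) the coefficients of $\A$ are Dini/H\"older continuous and $\varphi$ is smooth, $v-u$ vanishes on $\partial D$, is positive inside, and satisfies a linear uniformly elliptic equation obtained by linearizing $\A$ along the segment joining $\nabla u$ to $\nabla v$; standard boundary regularity then makes $\partial_\A u$ and $\partial_\A v$ well defined $\mathcal{H}^{n-1}$-a.e. on $\partial D$ (and, via assumption 3 on $\Gamma$, makes $\J(\Omega^\star)$ and $\J(\tilde\Omega)$ finite), while Hopf's lemma for $v-u$ yields the strict ordering of the normal derivatives which, with the inward orientation of $\mu$ in (\ref{A-derivative}), reads $\partial_\A v(X)<\partial_\A u(X)$ for $\mathcal{H}^{n-1}$-a.e. $X\in\partial D$.

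Since $\Gamma(X,\cdot)$ is increasing for every $X$, the last inequality upgrades to $\Gamma(X,\partial_\A v(X))<\Gamma(X,\partial_\A u(X))$ a.e. on $\partial D$, and integrating gives $\J(\tilde\Omega)<\J(\Omega^\star)$, contradicting the minimality of $\Omega^\star$; hence $\Leb(\Omega^\star\setminus D)=\iota$. I expect the genuinely delicate step to be the passage from the interior comparison $v\ge u$ to the strict gradient inequality $\partial_\A v<\partial_\A u$ on $\partial D$: one must localize in the regular collar $D_{\delta_0}$, linearize the degenerate operator $\L$ to a uniformly elliptic one with regular coefficients, and then combine the strong maximum principle with Hopf's lemma --- alternatively, a direct barrier comparison near $\partial D$ would bypass the linearization. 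The remaining ingredients --- the choice of $\varepsilon$, $X_0$, $\rho$ and the admissibility of $\tilde\Omega$ --- are routine.
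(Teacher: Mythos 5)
Your proposal is correct and follows essentially the same route as the paper, whose own justification is the comparison/perturbation argument sketched immediately before the statement of Lemma \ref{Lemma 1 - less or equal replaced by equal}: enlarge $\Omega^\star$ by a small ball at a free boundary point, invoke the comparison principle to get $u(\tilde\Omega)\ge u(\Omega^\star)$, and use Hopf's lemma in the regular collar $D_{\delta_0}$ together with the monotonicity of $\Gamma$ to deduce $\J(\tilde\Omega)<\J(\Omega^\star)$. The extra details you supply (existence of a free boundary point off $\partial D$, the choice of $\rho$, and the linearization making $\partial_\A u$, $\partial_\A v$ well defined via (\ref{Continuity of A close to D})) are consistent with, and merely flesh out, the paper's argument.
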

Another general comment: we will always extend the $\A$-potential
$u(\Omega)$ by zero outside $\Omega$. Thus, in the distributional
sense,
\begin{equation}\label{superharmoniticity of A-pontentials}
    \L \left [ u(\Omega) \right ] = 0, \textrm{ in } \Omega = \{ u(\Omega) > 0 \} \quad
    \textrm{ and } \quad \L \left [ u(\Omega) \right ] \ge 0, \textrm{ in } \R^n
    \setminus D.
\end{equation}
\par
Returning to the penalty technique issue: we shall borrow the
simple, yet quite clever penalty term suggested in \cite{Tilli},
that is, for each $\lambda > 0$, we will consider the penalization
term $\varrho_\lambda \colon \R_+ \to \R_+$, defined by
\begin{equation}\label{penalization term}
\varrho_\lambda(t) := \lambda (t - \iota)^{+}.
\end{equation}
We then define the $\lambda$-perturbed functional, $\J_\lambda$,
to be
\begin{equation} \label{e-Functional}
    \J_\lambda(\Omega) := \displaystyle \int_{\partial D}
    \Gamma \left ( X, \partial_\A u(X) \right ) d\H + \varrho_\lambda
    \left ( \Leb \left ( \Omega \setminus D \right ) \right ).
\end{equation}
Once more, the idea is the following: we allow $\J_\lambda$ to act
on any configuration $\Omega \supset D$ and, when $\lambda$ is big
enough, we hope that an optimal design $\Omega^\star_\lambda$ for
$\J_\lambda$ will satisfy $\Leb \left ( \Omega^\star_\lambda
\setminus D \right ) = \iota$, thus it will also be a minimizer
for our original optimization problem with volume constraint. Our
initial goal is then study existence and geometric properties of
the penalized problem:
\begin{equation}\label{Penalized Min Problem}
    (\mathfrak{P}_\lambda) \quad \quad
    \textrm{Minimize } ~ \Big \{ \J_\lambda(\Omega) \textrm{ among all sets }
    \Omega \supset D \Big \}.
\end{equation}
However, even the penalty problem (\ref{Penalized Min Problem})
is, in principle, too hard to be directly approached. Thus, for
the time being, it will be more appropriate to initially deal with
a weak formulation of problem (\ref{Penalized Min Problem}), which
we start describing now. Let $\delta_0$ be the technical number in
(\ref{Continuity of A close to D}). For each $\delta <\!\!<
\delta_0$, we us define the functional set
\begin{equation}\label{def of V delta funct set}
    \mathcal{V}(\delta) := \left \{ f \in W^{1,p}(D^C) \suchthat f =
    \varphi \textrm{ on } \partial D, ~ f \ge 0, ~ \mathfrak{L} f \ge 0, ~ \mathfrak{L} f =
    0 \textrm{ in } D_\delta \right \}.
\end{equation}
Then we define the sample functional set:
\begin{equation}\label{def of V funct set}
    \mathcal{V} := \bigcup\limits_{\delta \searrow 0} \mathcal{V}(\delta)
\end{equation}
and the weak formulation of problem (\ref{Penalized Min Problem})
can then be stated as
\begin{equation} \label{Weak Min Problem}
    (\mathfrak{P}_\lambda^{\mathrm{weak}}) \quad \quad
    \min\limits_{f \in \mathcal{V}}  \left \{
    \int_{\partial D} \Gamma \left ( X, \partial_\A f(X) \right )
    d\H + \varrho_\lambda \left ( \Leb \left (\{ f > 0 \} \setminus D \right ) \right )
    \right \}.
\end{equation}
\section{Basic functional and analytic properties}
\label{SECTION Basic functional and analytic properties}

In this section we establish all the basic and necessary
properties on the mathematical elements of the problems we are
interested in, namely, problems (\ref{Minimization Problem}),
(\ref{Penalized Min Problem}) and (\ref{Weak Min Problem});
however we will mostly be concerned with the latter, as it the the
weakest formulation among them all.
\par
We start by stating, as a lemma, a simple yet crucial observation
regarding the measure theory involved on our optimization
problems. The proof is somewhat long, but rather standard and we
omit it here.
\begin{lemma} \label{Lf is a Radon Measure} Let $f \in
\mathcal{V}$, as in (\ref{def of V funct set}). Then $\mathfrak{L}
f$ defines a nonnegative Radon measure, $\mu_f$, in $D^C$. In
particular, for any $\psi \in C(D^C) \cap W^{1,p}(D^C)$,
$$
    \int_{B_r(Y)} \psi(X) d\mu_f(X) + \int_{B_r(Y)} \langle \A(X,Df), D \psi(X)
    \rangle dX = \int_{\partial B_r(Y)} \psi(S) \cdot
    \partial_\A f(S)d\mathcal{H}^{n-1}(S),
$$
for almost all $0 \le r < \dist (Y,\partial D)$. Also, if $\psi
\in W^{1,p}_0(D^C)$, there holds
\begin{equation}\label{weak Div Thm}
    \int_{D^C} \langle \A(X,Df), D\psi(X) \rangle dX = \int_{D^C} \psi(X) d\mu_f(X).
\end{equation}
Furthermore,
\begin{equation}\label{meaure mu is normal bdry int}
    \mu_f \left (\mathbb{R}^n \setminus D \right ) = \int_{\partial
    D} \partial_\A f(X) d\H.
\end{equation}
\end{lemma}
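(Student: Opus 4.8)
The plan is to realize $\L f$ as a nonnegative distribution on $D^C$, invoke the Riesz representation theorem to produce the measure $\mu_f$, and then extract the three integral identities by approximation; the one genuinely delicate point is the passage to integrals over the spheres $\partial B_r(Y)$, which is why that identity is asserted only for almost every radius. Since $f\in W^{1,p}(D^C)$, structure condition (c)(ii) gives $\A(\cdot,Df)\in L^{p/(p-1)}(D^C;\R^n)$, so $\L f=\div\A(\cdot,Df)$ is a well-defined distribution that extends continuously to $W_0^{1,p}(D^C)$ by H\"older's inequality. The hypothesis $\L f\ge 0$ says exactly that this distribution is nonnegative on nonnegative test functions; a nonnegative distribution is automatically of order zero and is represented by a nonnegative Radon measure, so the Riesz--Markov theorem yields a unique nonnegative Radon measure $\mu_f$ on $D^C$ representing $\L f$. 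Because $\L f=0$ on $D_\delta$ (recall $f\in\mathcal V(\delta)$ for some $\delta>0$) and $\A(X,0)=0$ by (c)(ii), the support of $\mu_f$ lies in $\overline{\{f>0\}}\setminus D_\delta$, so $\mu_f$ is a finite measure.

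Identity \eqref{weak Div Thm} for $\psi\in C_c^\infty(D^C)$ is nothing but the statement that $\mu_f$ represents $\L f$, read with the sign and normal conventions fixed in the statement. For general $\psi\in W_0^{1,p}(D^C)$ one approximates by $\psi_k\in C_c^\infty(D^C)$ in $W^{1,p}$: the left-hand sides converge by H\"older's inequality and $\A(\cdot,Df)\in L^{p/(p-1)}$, and the right-hand sides converge as well — when $p>n$ directly from the embedding $W^{1,p}\hookrightarrow C^0$ together with the finiteness of $\mu_f$, and in general by a standard capacitary argument, using that the Riesz measure $\mu_f$ charges no set of $p$-capacity zero.

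For the identity on balls, fix $Y\in D^C$ and $0<r<\dist(Y,\partial D)$, so $\overline{B_r(Y)}\subset D^C$, and take $\psi\in C(D^C)\cap W^{1,p}(D^C)$. Multiplying by a radial cutoff $\zeta_\varepsilon\in C_c^\infty(B_r(Y))$ equal to $1$ on $B_{r-\varepsilon}(Y)$, with $D\zeta_\varepsilon$ supported in the annulus $B_r(Y)\setminus B_{r-\varepsilon}(Y)$ and essentially equal to $-\varepsilon^{-1}$ times the outer unit normal of the spheres, and applying \eqref{weak Div Thm} to $\psi\zeta_\varepsilon\in W_0^{1,p}(D^C)$, one expands $D(\psi\zeta_\varepsilon)$: the two bulk terms converge to $\int_{B_r(Y)}\langle\A(X,Df),D\psi\rangle\,dX$ and $\int_{B_r(Y)}\psi\,d\mu_f$ (the latter whenever $\mu_f(\partial B_r(Y))=0$, hence for a.e.\ $r$), while the term carrying $D\zeta_\varepsilon$ is, by the coarea formula, an average over radii in $[r-\varepsilon,r]$ of the sphere integrals of the $L^1_{\loc}$ field $\A(\cdot,Df)$, and therefore converges — at every Lebesgue point $r$ of that function of the radius, again a.e.\ $r$ — to $\int_{\partial B_r(Y)}\psi\,\partial_\A f\,d\mathcal{H}^{n-1}$, with $\partial_\A f$ on $\partial B_r(Y)$ taken with the normal convention of the statement. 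Rearranging gives the stated formula.

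Finally, for \eqref{meaure mu is normal bdry int}, assumption \eqref{Continuity of A close to D} together with interior elliptic regularity for $\L f=0$ on $D_\delta$ makes $\A(\cdot,Df)$ continuous up to $\partial D$, so $\partial_\A f=\langle\A(\cdot,Df),\mu\rangle$ is defined $\mathcal{H}^{n-1}$-a.e.\ on $\partial D$. Choosing $\psi\in W^{1,p}(D^C)$ with compact support and $\psi\equiv 1$ on $\overline{\{f>0\}}$, the bulk term $\int_{D^C}\langle\A(X,Df),D\psi\rangle\,dX$ vanishes, since $D\psi\equiv 0$ on $\{f>0\}$ while $\A(X,Df)=\A(X,0)=0$ on $\{f=0\}$; the Gauss--Green formula on $D^C$ applied to $\psi\,\A(\cdot,Df)$ — whose distributional divergence is $\psi\,\mu_f+\langle\A(\cdot,Df),D\psi\rangle$, and which is regular near $\partial D$ with $\mu_f$ supported away from $\partial D$ — then yields $\mu_f(\R^n\setminus D)=\int_{\partial D}\partial_\A f\,d\mathcal{H}^{n-1}$. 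The step that is more than bookkeeping is the sphere-integral passage above: the merely $L^{p/(p-1)}$ field $\A(\cdot,Df)$ has a meaningful trace on $\partial B_r(Y)$ only for a.e.\ $r$, and that is where the coarea formula and Lebesgue differentiation carry the argument.
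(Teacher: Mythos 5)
The paper omits the proof of this lemma, calling it ``rather standard,'' and your argument is precisely that standard proof --- Riesz representation of the nonnegative distribution $\L f$, extension of the pairing to $W^{1,p}_0(D^C)$ via the fact that a nonnegative measure in $W^{-1,p'}$ charges no set of $p$-capacity zero, and Gauss--Green on $B_r(Y)$ via radial cutoffs together with the coarea formula and Lebesgue points in $r$ (which is indeed exactly why the sphere identity holds only for a.e.\ $r$); modulo the paper's own loose sign/normal conventions, it is correct. The one point to flag is that your deduction ``$\supp\mu_f\subset\overline{\{f>0\}}\setminus D_{\delta_0}$, hence $\mu_f$ is finite,'' and the choice in the last step of a compactly supported $\psi\equiv 1$ on $\overline{\{f>0\}}$, implicitly assume $\{f>0\}$ is bounded --- an assumption not written into the definition of $\mathcal{V}$ but needed for \eqref{meaure mu is normal bdry int} (and harmless in all applications in the paper); for an unbounded positivity set one must instead exhaust with cutoffs $\psi_R$ and control the error term $\int\langle\A(X,Df),D\psi_R\rangle\,dX$, which requires an additional decay argument when $p<n$.
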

Another useful results that we make use throughout the paper is
(for a proof in the case of the $p$-Laplacian we refer, for
instance, to \cite{DP}, page 100):
\begin{lemma} \label{p harmonic functions with same bondary data}
Let $\mathcal{O}$ be a domain in $\R^n$ and $f \in
W^{1,p}(\mathcal{O})$. There exists a constant $c = c(n,\A) > 0$,
such that
$$
    \int_{\mathcal{O}} \left ( \langle \A(X, Df), Df \rangle  - \langle \A(X, Dh), Dh \rangle
    \right ) dX \ge c
    \left \{
        \begin{array}{lll}
            \displaystyle \int_{\mathcal{O}} |\nabla (f-h)|^p dX &\textrm{ if
            }& p\ge 2 \\
            \alpha(f) \cdot \left [
            \displaystyle \int_{\mathcal{O}} |\nabla (f-h)|^p dX \right
            ]^{2/p} &\textrm{ if }& 1 < p \le 2.
        \end{array}
    \right.
$$
where
$$
    \alpha (f) := \displaystyle \left [ \int_{\mathcal{O}} |\nabla f|^p dX\right]^{1-\frac{2}{p}}
$$
and $h$ is the $\A$-harmonic function in $\mathcal{O}$ that
agrees with $f$ on $\partial \mathcal{O}$.
\end{lemma}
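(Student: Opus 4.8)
The plan is to run the classical energy-comparison argument for functionals of $p$-growth --- for the $p$-Laplacian this is precisely the computation reproduced in \cite{DP} --- the operator $\A$ entering only through a uniform convexity (equivalently, a quantitative strong monotonicity) property of its structure.

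First I would extract the single consequence of the PDE that is needed. Since $h$ is the $\A$-harmonic function in $\mathcal{O}$ with $h=f$ on $\partial\mathcal{O}$, the difference $v:=f-h$ belongs to $W^{1,p}_0(\mathcal{O})$, and using $v$ as a test function in $\L h=\div\A(X,Dh)=0$ gives the orthogonality identity
$$
\int_{\mathcal{O}}\langle\A(X,Dh),\,Df-Dh\rangle\,dX=0 .
$$
The pointwise input is the standard uniform-convexity inequality for the $p$-energy density: for a.e.\ $X\in\mathcal{O}$ and all $a,b\in\R^n$,
$$
\langle\A(X,b),b\rangle-\langle\A(X,a),a\rangle-p\,\langle\A(X,a),\,b-a\rangle\ \ge\ c\,\omega_p(a,b),\qquad
\omega_p(a,b):=\begin{cases}|b-a|^p,&p\ge 2,\\[3pt]\dfrac{|b-a|^2}{\big(|a|+|b|\big)^{2-p}},&1<p\le 2,\end{cases}
$$
with $c=c(n,p,\lambda,\Lambda)>0$; for $p\ge2$ this is the elementary convexity of $\xi\mapsto|\xi|^p$, and for $1<p\le2$ it is the degenerate convexity estimate, cf.\ \cite{DP}. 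Applying it with $a=Dh$, $b=Df$, integrating over $\mathcal{O}$, and discarding the linear term (which vanishes by the orthogonality identity), one gets
$$
\int_{\mathcal{O}}\big(\langle\A(X,Df),Df\rangle-\langle\A(X,Dh),Dh\rangle\big)\,dX\ \ge\ c\int_{\mathcal{O}}\omega_p(Dh,Df)\,dX ,
$$
which already settles the case $p\ge 2$.

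For $1<p\le2$ it remains to pass from the weighted quantity $\int_{\mathcal{O}}|Df-Dh|^2(|Df|+|Dh|)^{p-2}\,dX$ to $\big(\int_{\mathcal{O}}|Df-Dh|^p\,dX\big)^{2/p}$. I would do this with H\"older's inequality for the conjugate pair $\big(\tfrac2p,\tfrac2{2-p}\big)$,
$$
\int_{\mathcal{O}}|Df-Dh|^p\,dX\ \le\ \Big(\int_{\mathcal{O}}\tfrac{|Df-Dh|^2}{(|Df|+|Dh|)^{2-p}}\,dX\Big)^{p/2}\Big(\int_{\mathcal{O}}(|Df|+|Dh|)^p\,dX\Big)^{(2-p)/2},
$$
combined with the a priori bound $\|Dh\|_{L^p(\mathcal{O})}\le(\Lambda/\lambda)\,\|Df\|_{L^p(\mathcal{O})}$, which comes from testing $\L h=0$ with $h-f\in W^{1,p}_0(\mathcal{O})$ and using (c)(i)--(ii). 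This lets one replace $\int_{\mathcal{O}}(|Df|+|Dh|)^p\,dX$ by a constant multiple of $\int_{\mathcal{O}}|Df|^p\,dX$; rearranging, the leftover power of $\int_{\mathcal{O}}|Df|^p\,dX$ is exactly $1-\tfrac2p$, i.e.\ the factor $\alpha(f)$.

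The routine parts are the orthogonality identity and the homogeneity bookkeeping. The one place that needs care is the subquadratic regime $1<p<2$: there the estimate produced by convexity is the weighted $L^2$ one, and the weight is precisely what forces $\alpha(f)$ to appear --- note that $\alpha(f)$ is then a \emph{negative} power of the Dirichlet energy of $f$ --- so the a priori comparison $\|Dh\|_{L^p}\lesssim\|Df\|_{L^p}$ is indispensable and cannot be bypassed.
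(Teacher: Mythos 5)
Your proposal is correct, and it is essentially the argument the paper itself delegates to \cite{DP}: the paper offers no proof of this lemma, only the citation, and your orthogonality-plus-uniform-convexity computation (with the H\"older step producing the factor $\alpha(f)$ in the subquadratic range) is exactly the standard one being invoked. The only caveat worth recording is one you share with the paper's statement: for a general $\A$ satisfying only (a)--(c), the $\A$-harmonic function need not minimize $\int\langle\A(X,Dv),Dv\rangle\,dX$, and the strict monotonicity (c)(iii) carries no quantitative modulus, so your pointwise convexity inequality with linear term $p\langle\A(X,a),b-a\rangle$ and lower bound $c\,\omega_p(a,b)$ implicitly requires $\A$ to be the gradient of a $p$-homogeneous potential with strong (not merely strict) monotonicity --- assumptions the paper itself uses tacitly throughout, e.g.\ when it treats $\A$-harmonic functions as energy minimizers.
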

\par
Our first Proposition provides an energy estimate for a minimizing
sequences to our optimization problems. More precisely, we have:
\begin{proposition}\label{Energ Estimate} Let $u_j$ be a
minimizing sequence for the functional $\J_\lambda$. Then,
$$
    \|\nabla u_j \|_{L^p(D^C)} \le C,
$$
where $C$ depends only on dimension, $\A$, $D$, $\varphi$ and
$\Gamma$.
\end{proposition}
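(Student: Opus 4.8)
The plan is to bound the $L^p$-norm of $\nabla u_j$ by playing the coercivity of the operator $\A$ (condition (c)(i)) against an upper bound for the boundary flux, which in turn follows from the fact that $(u_j)$ is \emph{minimizing} for $\J_\lambda$. First I would fix a convenient competitor: since any configuration $\Omega \supset D$ is admissible, a natural choice is $\Omega_0 := D_{\delta_0/2}\cup D$ (or simply a fixed smooth bounded set containing $\overline{D}$), with associated $\A$-potential $w$. Because $\A$ and $\varphi$ are smooth near $\partial D$ by (\ref{Continuity of A close to D}), $\partial_\A w$ is well defined on $\partial D$ and $\J_\lambda(\Omega_0)$ is a finite constant depending only on $n,\A,D,\varphi,\Gamma$. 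Minimality then gives $\J_\lambda(u_j)\le \J_\lambda(\Omega_0)+1$ for $j$ large, hence both $\int_{\partial D}\Gamma(X,\partial_\A u_j)\,d\H$ and $\varrho_\lambda(\Leb(\{u_j>0\}\setminus D))$ are bounded by a constant $C_0$.

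Next I would convert the bound on $\int_{\partial D}\Gamma(X,\partial_\A u_j)\,d\H$ into a bound on $\int_{\partial D}\partial_\A u_j\,d\H = \mu_{u_j}(\R^n\setminus D)$ via Lemma \ref{Lf is a Radon Measure}, using property 3 of $\Gamma$ together with the coercivity (\ref{coersivity}): if $\int_{\partial D}\partial_\A u_j\,d\H$ were not bounded, then along a subsequence the average $t_j := \fint_{\partial D}\partial_\A u_j\,d\H \to +\infty$, and convexity of $\Gamma(X,\cdot)$ plus Jensen would force $\int_{\partial D}\Gamma(X,\partial_\A u_j)\,d\H \ge \int_{\partial D}\Gamma(X,t_j)\,d\H \to +\infty$, a contradiction. (One must be slightly careful when $\Gamma$ is flat on part of its domain, but property 3 handles the genuinely increasing regime, which is the only one that matters since $\partial_\A u_j\ge 0$ by the maximum principle/Hopf lemma and we are driving it to $+\infty$.) This yields $\mu_{u_j}(\R^n\setminus D)\le C$.

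Now I would test the weak divergence identity (\ref{weak Div Thm}) with an appropriate $\psi\in W^{1,p}_0(D^C)$. Extend $\varphi$ to a fixed smooth function $\Phi$ on $\R^n$ supported in $D_{\delta_0}$, and put $\psi := u_j - \Phi$, which lies in $W^{1,p}_0(D^C)$ since $u_j=\varphi=\Phi$ on $\partial D$ and $u_j$ has bounded support away from infinity (one truncates with a fixed cutoff if needed). Then
\begin{equation*}
    \int_{D^C}\langle \A(X,Du_j),Du_j\rangle\,dX
    = \int_{D^C}\langle \A(X,Du_j),D\Phi\rangle\,dX + \int_{D^C}(u_j-\Phi)\,d\mu_{u_j}.
\end{equation*}
By (c)(i) the left side is $\ge \lambda\|\nabla u_j\|_{L^p(D^C)}^p$; by (c)(ii) and Young's inequality the first term on the right is $\le \tfrac{\lambda}{2}\|\nabla u_j\|_{L^p}^p + C(\lambda,p)\|\nabla\Phi\|_{L^p}^p$; and the second term is controlled by $\|u_j-\Phi\|_{L^\infty}\,\mu_{u_j}(\R^n\setminus D)$, where $\|u_j\|_{L^\infty}\le \sup_{\partial D}\varphi$ by the maximum principle and $\mu_{u_j}(\R^n\setminus D)\le C$ from the previous step. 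Absorbing the $\tfrac{\lambda}{2}\|\nabla u_j\|_{L^p}^p$ term into the left side gives $\|\nabla u_j\|_{L^p(D^C)}^p\le C$ with $C$ depending only on $n,\A,D,\varphi,\Gamma$, as claimed.

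\textbf{Main obstacle.} The delicate point is the second step: rigorously passing from the bound on $\int_{\partial D}\Gamma(X,\partial_\A u_j)\,d\H$ to a bound on the total mass $\mu_{u_j}(\R^n\setminus D)$, since $\Gamma$ is only assumed convex, increasing, and to satisfy the comparison condition 3 — it may vanish on a half-line and it is not assumed superlinear pointwise. The coercivity statement (\ref{coersivity}) is about the \emph{integrated} nonlinearity, so one genuinely needs Jensen's inequality on $\partial D$ (with the normalized $\mathcal{H}^{n-1}$ measure) rather than a pointwise argument, and one must first establish $\partial_\A u_j\ge 0$ $\mathcal{H}^{n-1}$-a.e. on $\partial D$ (Hopf-type lemma, legitimate under (\ref{Continuity of A close to D})) so that $t_j\ge 0$ and the monotone/convex machinery applies on the relevant side. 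Everything else — the choice of competitor, the maximum principle $L^\infty$ bound, and the Young-inequality absorption — is routine.
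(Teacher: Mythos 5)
Your proposal is correct and follows essentially the same route as the paper: an energy identity obtained by testing the weak divergence formula of Lemma \ref{Lf is a Radon Measure} against $u_j$ minus a fixed extension of $\varphi$ (the paper uses the $p$-harmonic extension $\mathfrak{h}$, you use a smooth extension $\Phi$ — immaterial), ellipticity plus Young's inequality to absorb, the maximum principle $L^\infty$ bound, and Jensen's inequality combined with property 3 of $\Gamma$ and the coercivity (\ref{coersivity}) to control the total flux $\mu_{u_j}(\R^n\setminus D)$ by the (bounded) functional values of the minimizing sequence. The only difference is the order — you bound the flux first and then run the energy identity, while the paper expresses $\|\nabla u_j\|_{L^p}^p$ in terms of the flux first and then feeds it through $\Gamma$ — which does not change the argument.
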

\begin{proof}
Let $\mathfrak{h} = \mathfrak{h}_p$ be the $p$-harmonic function
in $D^C$ that agrees with $\varphi$ on $\partial D^C$, that is the
solution to
\begin{equation}\label{A potential on D complementar}
    \left \{
        \begin{array}{rll}
            \Delta_p \mathfrak{h} &=& 0 \textrm{ in } D^C \\
            \mathfrak{h} &=& \varphi \textrm{ on } \partial D^C \\
            \mathfrak{h} &\in& W^{1,p}(D^C).
        \end{array}
    \right.
\end{equation}
From the maximum principle, there holds
$$
    0 \le  \mathfrak{h} \le \sup\limits_{\partial D} \varphi.
$$
For sake of notation convenience, let us denote $\L u_j dX :=
\mu_{u_j} = \mu_j $, as in Lemma \ref{Lf is a Radon Measure}. We
clearly have
\begin{equation} \label{Energ Est Eq01}
    \begin{array}{lll}
        \displaystyle\int (\mathfrak{h} - u_j) d\mu_j &=&
        \displaystyle\int \langle \A(X, Du_j), (\mathfrak{h} -
        u_j) \rangle dX \\
        &=& \displaystyle\int \langle \A(X,Du),Dh(X) \rangle dX  - \int \langle
        \A(X,Du), Du \rangle dX.
    \end{array}
\end{equation}
From the degenerate ellipticity of $\A$, we can deduce from
(\ref{Energ Est Eq01}) that
\begin{equation}\label{Energ Est Eq02}
    \begin{array}{lll}
        \lambda \displaystyle \int |D u_j(X) |^p dX & \le &
        \left | \displaystyle \int (\mathfrak{h} - u) d\mu_j -
        \displaystyle \int \langle \A(X,Du), D \mathfrak{h} \rangle dX \right |
        \\
        & \le & \sup\limits_{\partial D} \varphi \cdot \mu_j(\R^n
        \setminus D) + \Lambda \displaystyle \int |Du_j|^{p-1} |D\mathfrak{h}|
        dX \\
        & \le & \sup\limits_{\partial D} \varphi \cdot \mu_j(\R^n
        \setminus D) + \dfrac{\lambda}{2}  \displaystyle \int |Du_j|^p dX +
        C_1  \displaystyle \int |D\mathfrak{h}|^p dX.
    \end{array}
\end{equation}
In the last step we have used Young's inequality and $C_1 =
\epsilon^{-p}/p$ where $\epsilon$ satisfies $\epsilon^{p/(p-1)} =
p\lambda /2(p-1)$. In view of (\ref{meaure mu is normal bdry int})
and the estimate in (\ref{Energ Est Eq02}), we reach the
conclusion that there exists a constant $C_1$, depending only on
$\A$, $D$ and $\varphi$, such that
\begin{equation}\label{Energ Est Eq03}
    \|\nabla u_j \|_{L^p(D^C)}^p \le C_1 \left ( 1 + \dfrac{1}{2\alpha} \int_{\partial
    D} \partial_\A u_j(X) d\H \right ),
\end{equation}
where $\alpha := \mathcal{H}^{n-1}(\partial D)$. From the
monotonicity and convexity properties of the non-linearity
$\Gamma$, we derive, for each $Y \in \partial D$ fixed, that
$$
    2 \Gamma \left( Y, \|\nabla u_j \|_{L^p(D^C)}^p \right ) \le
    C_2 + \Gamma \left( Y, \dfrac{1}{\alpha} \int_{\partial
    D} \partial_\A u_j(X) d\H \right ),
$$
where $C_2$ is a constant depending only on $\A$, $D$, $\varphi$
and $\Gamma$. Once more using the convexity of $\Gamma(Y, \cdot)$,
it follows from Jensen's inequality that
\begin{equation}\label{Energ Estimate - Ineq 3}
    2 \Gamma \left( Y, \|\nabla u_j \|_{L^p(D^C)}^p \right ) \le
    C_1 +  \dfrac{1}{\alpha} \int_{\partial D}\Gamma \left( Y,\partial_\A u_j(X) \right )
    d\H.
\end{equation}
Integrate inequality (\ref{Energ Estimate - Ineq 3}) with respect
to $Y$ over $\partial D$ and taking into account property (iii) of
the non-linearity $\Gamma$, we derive
\begin{equation}\label{Energ Estimate - Ineq 4}
    \begin{array}{lll}
        \displaystyle \int_{\partial D} \Gamma \left( Y, \|\nabla u_j \|_{L^p(D^C)}^p \right )
        d\mathcal{H}^{n-1}(Y) &\le& C_3 \left ( 1 +
        \displaystyle  \int_{\partial D}\Gamma \left( X,\partial_\A u_j(X) \right )
        d\H \right ),
    \end{array}
\end{equation}
where again $C_3$ depends only upon $\A$, $D$, $\varphi$ and
$\Gamma$. Finally, (\ref{Energ Estimate - Ineq 4}) and the
coercivity of the function
$$
    t \mapsto \int_{\partial D}\Gamma \left( X, t \right )
    d\H,
$$
see (\ref{coersivity}), together complete the proof of the
Proposition.
\end{proof}
In view of the energy estimate provided in Proposition \ref{Energ
Estimate}, it becomes natural to investigate the behavior of
$\J_\lambda$ over weakly convergent sequences in $W^{1,p}$. In
this direction we have
\begin{lemma}\label{semi-continuity}
Let $f_j \in H^1(D^C)$ be a sequence of functions satisfying, $\L
f_j \ge 0$ in the distributional sense and, for some $\delta > 0$,
$\L f_j = 0$ in $D_\delta := \left \{X \in D^C \suchthat \dist (X,
\partial D) < \delta \right \}.$ Assume $f_j$ converges weakly to
$f$ in $W^{1,p}(D^C)$. Then, $\L f \ge 0$ in the distributional
sense, $\L f = 0$ in $D_\delta$ and furthermore
$$
    \J (f) + \varrho_\lambda \left ( | \{ f > 0 \} | \right )
    \le \liminf\limits_{j \to \infty}  \Big \{
    \J(f_j) + \varrho_\lambda \left ( | \{ f_j > 0 \} | \right )  \Big \}.
$$
\end{lemma}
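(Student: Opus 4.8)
The plan is to split the claim into three parts: (i) the distributional inequality $\L f \ge 0$ passes to the weak limit; (ii) the equation $\L f = 0$ in $D_\delta$ passes to the weak limit; and (iii) the lower semicontinuity of the full penalized functional. Parts (i) and (ii) are the ``easy'' analytic facts about $\A$-harmonicity under weak convergence, and part (iii) is where the real work lies. For (i), fix a nonnegative $\psi \in C_c^\infty(D^C)$; then by the monotonicity condition (iii) on $\A$ applied to $\xi_1 = Df_j$, $\xi_2 = Df$ we get $\int \langle \A(X,Df_j) - \A(X,Df), D(f_j - f)\rangle \, dX \ge 0$, which rearranges (using the distributional inequality for $f_j$ and weak convergence $Df_j \rightharpoonup Df$ to kill the cross terms) to yield $\int \langle \A(X,Df), D\psi\rangle\, dX \le 0$, i.e.\ $\L f \ge 0$. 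A slight refinement of the same argument (testing against both signs of $\psi \in C_c^\infty(D_\delta)$, where $\L f_j = 0$) gives $\L f = 0$ in $D_\delta$; here one exploits that $\A$-harmonicity in $D_\delta$ together with the energy bound of Proposition~\ref{Energ Estimate} forces strong $W^{1,p}_{\loc}(D_\delta)$ convergence of $Df_j$ (a standard Minty-type / monotone-operator argument using condition (iii)), so that $\A(X,Df_j) \to \A(X,Df)$ strongly enough to pass to the limit inside the weak formulation $\int \langle \A(X,Df_j), D\psi\rangle\, dX = 0$.

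With strong local convergence of the gradients near $\partial D$ in hand, the flux term is handled as follows. Since $\A$ is Dini (or H\"older) continuous in $D_{\delta_0}$ by~(\ref{Continuity of A close to D}) and $\varphi$ is correspondingly smooth, each $f_j$ is $\A$-harmonic in $D_\delta$ and hence $C^{1,\alpha}$ up to $\partial D$ with uniformly bounded $C^{1,\alpha}(\overline{D_{\delta/2}})$ norms (by the energy bound and the interior-to-boundary regularity theory for the homogeneous equation $\L f_j = 0$). Therefore $\partial_\A f_j \to \partial_\A f$ uniformly on $\partial D$, so by continuity of $\Gamma$ and dominated convergence, $\int_{\partial D} \Gamma(X, \partial_\A f_j)\, d\H \to \int_{\partial D} \Gamma(X, \partial_\A f)\, d\H$. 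In particular the flux term is actually \emph{continuous}, not merely lower semicontinuous, along the sequence. (This is the payoff of hypothesis~(\ref{Continuity of A close to D}): the nonlinearity $\Gamma$ lives only on the fixed boundary $\partial D$, where the medium is regular, so no lower-semicontinuity subtlety enters through $\J$.)

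It remains to treat the penalty term $\varrho_\lambda(|\{f>0\}|) = \lambda(|\{f>0\} \setminus D| - \iota)^+$. The key point is that $f \mapsto |\{f>0\}\setminus D|$ is lower semicontinuous under, say, $L^1_{\loc}$ convergence, which the weak $W^{1,p}$ convergence supplies (via Rellich on bounded pieces, plus the fact that all competitors vanish outside a fixed large ball since they are $\A$-subsolutions agreeing with $\varphi$ on $\partial D$ — more precisely one first verifies the supports are uniformly bounded). Indeed, after passing to a subsequence, $f_j \to f$ a.e., so $\chi_{\{f>0\}} \le \liminf_j \chi_{\{f_j>0\}}$ pointwise a.e., and Fatou gives $|\{f>0\}\setminus D| \le \liminf_j |\{f_j>0\}\setminus D|$; since $t \mapsto \lambda(t-\iota)^+$ is continuous and nondecreasing, $\varrho_\lambda(|\{f>0\}|) \le \liminf_j \varrho_\lambda(|\{f_j>0\}|)$. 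Combining the continuity of the flux term with the lower semicontinuity of the penalty term, and noting $\liminf(a_j + b_j) \ge \lim a_j + \liminf b_j$ when $a_j$ converges, yields the asserted inequality. The main obstacle I anticipate is making rigorous the strong local gradient convergence near $\partial D$ and the attendant uniform $C^{1,\alpha}$ bounds that legitimize ``$\partial_\A f_j \to \partial_\A f$ on $\partial D$''; everything else is soft. If one wished to avoid the regularity theory, an alternative is to prove lower semicontinuity of the flux directly by a convexity argument (using that $\Gamma(X,\cdot)$ is convex and that $\partial_\A f$ can be represented, via~(\ref{meaure mu is normal bdry int}) and~(\ref{weak Div Thm}), through the weakly convergent quantities $\A(X,Df_j)$), but the regularity route is cleaner given the standing assumption~(\ref{Continuity of A close to D}).
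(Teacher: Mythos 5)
Your proposal is correct in substance, but on the main point --- lower semicontinuity of the flux term $\J$ --- it takes a genuinely different route from the paper. You exploit the standing assumption (\ref{Continuity of A close to D}) to get uniform $C^{1,\alpha}$ bounds for the $\A$-harmonic functions $f_j$ up to $\partial D$, hence uniform convergence of $\partial_\A f_j$ to $\partial_\A f$ on $\partial D$, and conclude that $\J(f_j)\to\J(f)$ actually \emph{converges}. The paper never claims this: it proves only lower semicontinuity, by approximating the convex nonlinearity $\Gamma(X,\cdot)$ from below by maxima of finitely many affine functions $F_m(X,t)=\max_k\{B_k(X)t+C_k(X)\}$, partitioning $\partial D$ according to which affine piece attains the maximum, and using the weak-$\star$ convergence of the nonnegative Radon measures $\L f_j\rightharpoonup\L f$ together with the representation (\ref{meaure mu is normal bdry int}) to pass to the liminf in each piece. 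Your route is shorter and yields a stronger conclusion, but it is hostage to the regularity hypotheses: it needs $\partial D$ and $\varphi$ smooth enough for boundary Schauder-type estimates for the degenerate operator (the paper only assumes $D$ Lipschitz, though it tacitly needs more elsewhere), and it collapses entirely if (\ref{Continuity of A close to D}) is dropped, whereas the paper's convexity argument is the robust, direct-method-style one and is exactly your suggested ``alternative''. Two smaller cautions: your Minty-type step establishing $\L f\ge 0$ outside $D_\delta$, where the $f_j$ are only subsolutions, is the one place where ``kill the cross terms'' hides real work --- identifying the weak $L^{p'}$ limit of $\A(X,Df_j)$ with $\A(X,Df)$ is not automatic from $Df_j\rightharpoonup Df$ (the paper glosses over the same equality); and the detour through ``uniformly bounded supports'' for the penalty term is unnecessary, since Fatou applies on the unbounded set $D^C$ directly once you have a.e.\ convergence along a subsequence.
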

\begin{proof}
The fact that $\L f \ge 0$ in the distributional sense follows
easily. In fact, for any nonnegative $\psi \in C^1_0(D^C)$, we
have
$$
    \langle \L f, \psi \rangle := - \int \langle \A(X,Df), D \psi
    \rangle dX = - \lim\limits_{j\to \infty} \int \langle \A(X,Df_j), D \psi
    \rangle dX \ge 0,
$$
since, for any $j$,  $\lim\limits_{j\to \infty} \int \langle
ADf_j, D\varphi \rangle dX \leq 0$. A similar computation shows
that $\L f = 0$ in $D_\delta$.
\par
Let us turn our attention to the $W^{1,p}$-weak lower
semicontinuity of the functional $\J_\lambda$. Firstly, the volume
penalty term of the functional $\J_\lambda$ is indeed weak lower
semicontinuous, since, up to a subsequence, $f_j(X) \to f(X)$ for
a.e. $X \in D^C$. Thus, by Fatou's Lemma
$$
    | \{ f > 0 \} | \le \liminf\limits_{j\to\infty} | \{ f_j > 0 \} |.
$$
Since, the penalty factor $\varrho_\lambda$ is non-decreasing and
continuous, there holds
$$
    \varrho_\lambda\left (  | \{ f > 0 \} | \right ) \le
    \liminf\limits_{j\to\infty}  \varrho_\lambda\left (  | \{ f_j > 0 \} | \right
    ),
$$
as desired. We now focus our attention on the functional $\J(v) =
\int_{\partial D} \Gamma(X, \partial_\A v) d\H$. As in the
Calculus of Variations, in order to establish the $W^{1,p}$-weak
lower semicontinuity of $\J$, we shall explore the convexity
assumption on $\Gamma(X,\cdot)$. Indeed, we start by analyzing
functional with piecewise linear potential, i.e., functionals with
this particular profile:
\begin{equation}\label{semi-continuity - functional form}
    \mathfrak{F}_m (v) := \int_{\partial D} F_m(X,\partial_\A v) d\H,
\end{equation}
where $F_m$ is of the form
\begin{equation}\label{semi-continuity - Fm form}
    F_m(X,t) = \max\limits_{1\le k \le m} \left \{ B_k(X) t + C_k(X) \right
    \}, \quad B_k, ~ C_k \in C(\partial D).
\end{equation}
We then label, for each $k = 1, 2, \cdots, m$, the sets
$$
    \mathcal{D}_k(f) :=\{ X \in \partial D \suchthat F_m (X, \partial_\A f(X) ) =
    B_k(X) \partial_\A f(X) + C_k(X) \}.
$$
Thus $\partial D = \bigcup\limits_{k=1}^{m} \mathcal{D}_k(f)$, and
we may assume that $\mathcal{D}_k(f) \cap \mathcal{D}_i(f) =
\emptyset$, whenever $k \not = i$. Also, recall that $\L f_j$ and
$\L f$ define Radon measures in $D^C$, and since $f_j
\rightharpoonup f$ in $W^{1,p}$, by standard elliptic estimates,
we have
$$
    \L f_j \stackrel{\star}{\rightharpoonup} \L f,
$$
in the sense of Radon measures. Therefore, using a representation
as in (\ref{meaure mu is normal bdry int}), we obtain that, for
any continuous function $\zeta \in C(\partial D)$,
$$
    \int_{\partial D} \zeta(X) \partial_\A f(X)  d \H \le
    \liminf\limits_{j\to \infty} \int_{\partial D}  \zeta(X) \partial_\A f_j(X)  d \H.
$$
With the above at hands, we estimate
$$
\begin{array}{lll}
    \mathfrak{F}_m(f) & = & \displaystyle\sum\limits_{k=1}^m \int_{\mathcal{D}_k(f)}
      \left \{ B_k(X)\partial_\A f + C_k(X) \right \} d\H \\
    & \le & \liminf\limits_{j\to \infty} \displaystyle\sum\limits_{k=1}^m \int_{\mathcal{D}_j(f)}
    \left \{ B_k(X) \partial_\A f_j  + C_k(X) \right \} d\H  \\
    & \le & \liminf\limits_{j\to \infty} \mathfrak{F}_m(f_j),
\end{array}
$$
In other words, we have proven functionals as in
(\ref{semi-continuity - functional form}) are $W^{1,p}$-weak lower
semicontinuous. Finally, under the assumption that
$\Gamma(X,\cdot)$ is convex we know that for each $X\in
\partial D$ there exits a sequence of functions $F_m(X,t)$ as in
(\ref{semi-continuity - Fm form}) such that, for any $t$,
\begin{equation}\label{semi-continuity 2}
    \Gamma(X,t) = \lim\limits_{m \to \infty} F_m(X,t).
\end{equation}
As a combination of (\ref{semi-continuity 2}) and the
$W^{1,p}$-weak lower semicontinuity of each $\mathfrak{F}_m$, the
Lemma follows.
\end{proof}
The results proven in Proposition \ref{Energ Estimate} and in
Lemma \ref{semi-continuity} are important piece of information
towards establishing the existence of an optimal shape for problem
(\ref{Penalized Min Problem}); however, at this precise stage,
those are not enough. We would like to invite the readers to make
a small pause in order to appreciate the intrinsic difficulty
involved in proving the existence of a minimal configuration to
the penalized problem (\ref{Penalized Min Problem}).
\par
Following the natural scheme, one considers a minimizing sequence,
$\Omega_j$, to the functional $\J_\lambda$, i.e.,
$$
    \J_\lambda(\Omega_j) \stackrel{j\to \infty}{\longrightarrow}
    \min\limits_{\Omega \supset D} \J_\lambda.
$$
If $u_j$ denotes the $\A$-potential associated to the
configuration $\Omega_j$, it follows from Proposition \ref{Energ
Estimate} that, up to a subsequence, $u_j$ converges weakly and
almost everywhere to a function $u \in W^{1,p}(D^C)$ which is
non-negative. As a consequence of Lemma \ref{semi-continuity}, we
have that $\L u \ge 0 $. Furthermore,
$$
    \int_{\partial D} \Gamma(X, \partial_\A u)d\H + \varrho_\lambda \left (
    |\{ u > 0 \} | \right ) \le \min\limits_{\Omega \supset D}
    \J_\lambda.
$$
Therefore, a natural candidate for an optimal shape to problem
(\ref{Penalized Min Problem}) is
$$
    \Omega := \left \{X \in \R \setminus D \suchthat u(X) > 0 \right \}.
$$
However, with the information we have so far, it is not possible
to guarantee that $(\Omega, u)$ is an admissible pair, i.e., that
$u$ is the $\A$-potential associated to $\Omega$, or equivalently
that
$$
    \L u = 0 \textrm{ in } \Omega.
$$
In fact, it is not true, in general, that if an ordinary sequence
of functions $u_j$, satisfying $\Delta u_j = 0$ in $\{u_j > 0\}$,
converges weakly in $H^1$ to $u$, then $\Delta u = 0$ in $\{u >
0\}$. As a general comment, the above described difficulty is one
of the features that makes problems with varying domains (free
boundary problems) notably more delicate.

\section{Existence of minimizer to problem
$(\mathfrak{P}_\lambda)$} \label{SECTION - Existence of minimizer
to problem Penalized}
Well, the scheme presented at the end of the previous section is
not pointless: since our sequence is converging to a special
configuration, namely a minimizer for the functional $\J_\lambda$,
we should keep the hope that this strong additional ingredient
will assure that in fact $\L u = 0$ in $\{u > 0 \}$. In this
section, we will carry this delicate analysis out, which will
ultimately allow us to conclude problem (\ref{Weak Min Problem})
has always a minimizer. As we will see, even the weak formulation
of the penalty version of our primary goal presents rather
delicate mathematical issues. This is due in part to the adverse
environment generated by the non-linear and degeneracy features of
$\A$, and in part to the non-local structure of the problem. The
latter makes local perturbations inefficient, and thus  more
creativity is needed to furnish appropriate competing
configurations.
\par
As for our first result towards the existence of a minimizer for
problem (\ref{Weak Min Problem}),  we will provide an a-priori
estimate on the distance from the free boundary to the fixed
boundary. This is an important supporting result as it allows to
seek for minimizers in a more suitable class of configurations.
\par
However, in order to accomplish such a result, we initially need
to study an auxiliary free boundary problem in the spirt of
\cite{AC}, which we present now.
\begin{theorem} \label{AC for A-potentials}Let $\mathcal{O}$ be a domain in
$\R^n$ and $\psi \colon \mathcal{O} \to \R$ a nonnegative
function. Let $\A$ be a $p$-degenerate elliptic map and assume $\A
(\cdot, \mathcal{O}) \in C^\epsilon$ for all $\mathcal{O}$. Then,
for any constant $\tau >0$, there exists a minimizer $v=v_\tau$ to
the problem
$$
    \textrm{Minimize } \quad \left \{
        {E}_\tau(f) := \displaystyle \int\limits_{\mathcal{O}}
        \left \{ \langle \A(X,Df), Df \rangle + \tau \chi_{\{f > 0\}} \right \} dX
        \suchthat f \in W^{1,p}(\mathcal{O}),~
        f\big |_{\partial \mathcal{O}} = \psi
         \right \}.
$$
Furthermore, $v$ is nonnegative, Lipschitz continuous and
nondegenerate away from the free boundary $\partial \{ v_\tau > 0
\}$.
\end{theorem}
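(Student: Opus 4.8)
The plan is to adapt the classical Alt--Caffarelli program from \cite{AC} to the degenerate $\A$-harmonic setting, using the energy comparison estimate of Lemma \ref{p harmonic functions with same bondary data} as the substitute for the Hilbert-space orthogonality that powers the linear case. First I would establish \textbf{existence of a minimizer}: the functional $E_\tau$ is bounded below by $0$, and for a minimizing sequence $f_k$ one gets a uniform $W^{1,p}(\mathcal{O})$ bound from coercivity of the quadratic-type term $\langle \A(X,Df),Df\rangle \ge \lambda|Df|^p$ together with a fixed competitor having boundary data $\psi$. Extract $f_k \rightharpoonup v$ weakly in $W^{1,p}$ and a.e.; the Dirichlet-type term is weakly lower semicontinuous because $\xi \mapsto \langle \A(X,\xi),\xi\rangle$ is convex (this follows from the monotonicity (iii) and homogeneity (iv) of $\A$, exactly as $|\xi|^p$ is convex), while $\int \tau\chi_{\{f>0\}}$ is l.s.c. under a.e.\ convergence by Fatou. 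One may replace $v$ by $v^+$ without increasing $E_\tau$, so $v\ge 0$; the boundary condition passes to the limit since the admissible class is weakly closed. Hence a nonnegative minimizer $v=v_\tau$ exists.

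Next, \textbf{Lipschitz continuity}. Interior $\A$-harmonicity of $v$ on $\{v>0\}$ follows by making one-sided inner variations $v_t = v + t\eta$ with $\eta$ compactly supported in $\{v>0\}$ (there the measure term is locally constant), giving $\L v = 0$ in $\{v>0\}$, so by the De Giorgi--Nash--Moser / $C^{1,\gamma}$ theory for $\A\in C^\epsilon$, $v$ is locally smooth inside the positivity set. The key is the linear growth bound near the free boundary: for $X_0\in\partial\{v>0\}$ and small $r$ with $B_r(X_0)\subset\mathcal{O}$, compare $v$ on $B_r(X_0)$ with the $\A$-harmonic replacement $h$ of $v$ on that ball. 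Testing minimality $E_\tau(v)\le E_\tau(h)$ and using Lemma \ref{p harmonic functions with same bondary data} gives
$$
    c\int_{B_r(X_0)} |\nabla(v-h)|^p\,dX \le \tau\,\Leb\big(\{v>0\}\cap B_r(X_0)\big)\le \tau\omega_n r^n
$$
(in the range $p\ge 2$; for $1<p\le 2$ one carries the factor $\alpha(v)$, controlled by the uniform energy bound, which only changes constants). Since $h\ge 0$ is $\A$-harmonic with $h=v$ on $\partial B_r$ and $h(X_0)\ge 0$, Harnack and the above $L^p$ closeness force $\fint_{\partial B_r(X_0)} v \le Cr$; a standard iteration/barrier argument (subsolution comparison on the annular region, exploiting $\A$-harmonicity where $v>0$) upgrades this to $\sup_{B_{r/2}(X_0)} v \le Cr$, i.e.\ linear growth, hence $v$ is Lipschitz in a neighborhood of the free boundary, and globally Lipschitz on compact subsets of $\mathcal{O}$ by combining with interior estimates.

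Finally, \textbf{nondegeneracy}: for $X_0\in\overline{\{v>0\}}$ and small $r$, I would test minimality against the competitor $w := \min\{v,\,\kappa(|X-X_0|-r/2)^+\}$ inside $B_r(X_0)$ (cutting $v$ down to zero on $B_{r/2}(X_0)$ at a controlled slope $\kappa$), which removes a definite fraction of volume $\gtrsim r^n$ from $\{v>0\}$ while changing the energy term by at most $C\kappa^p r^n$; choosing $\kappa$ a small fixed multiple of a suitable power of $\tau$ and using $E_\tau(v)\le E_\tau(w)$ forces $\fint_{\partial B_r(X_0)} v \ge c\,r$, which is nondegeneracy away from $\partial\{v_\tau>0\}$. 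I expect the \textbf{main obstacle} to be the linear growth step: in the linear Alt--Caffarelli argument one uses the explicit fundamental solution and the exact identity $\int|\nabla v|^2 - \int|\nabla h|^2 = \int|\nabla(v-h)|^2$, whereas here one must substitute the one-sided quantitative convexity estimate of Lemma \ref{p harmonic functions with same bondary data} (with its genuinely different $1<p\le 2$ behavior), control the $\A$-harmonic replacement $h$ only through Harnack and $C^{1,\gamma}$ estimates for rough $\A\in C^\epsilon$ rather than explicit formulas, and carefully track how the degeneracy constants $\lambda,\Lambda,p$ and $\tau$ enter so that the final Lipschitz constant is uniform on compact subsets; handling the $p\ne 2$ Caccioppoli/iteration bookkeeping cleanly is where the real work lies.
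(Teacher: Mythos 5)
The paper never actually proves this theorem: it names two admissible routes --- a direct Alt--Caffarelli-type minimization argument ``mixing the strategy as in \cite{AC} and \cite{DP}'', and a singular-perturbation route in which $\chi_{\{f>0\}}$ is replaced by $B_\varepsilon(f)$, Lipschitz continuity and nondegeneracy are proved for the regularized minimizers $v^\varepsilon$ uniformly in $\varepsilon$, and one passes to the limit $\varepsilon \searrow 0$ --- and then explicitly omits all details, sketching only the set-up of the second route. Your proposal is a fleshed-out version of the first route, so it is a legitimate alternative to what the paper sketches; the direct route spares you the uniform-in-$\varepsilon$ estimates but forces you to handle the discontinuous term $\chi_{\{f>0\}}$ head-on, and that is precisely where your sketch has two soft spots.

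First, in the Lipschitz step, the inference ``Harnack and the $L^p$ closeness force the average of $v$ over $\partial B_r(X_0)$ to be $\le Cr$'' is not an argument: $h=v$ on $\partial B_r$, so smallness of $\int|\nabla(v-h)|^p$ carries no direct information about the boundary average, and for $p\le n$ one cannot convert $\|\nabla(v-h)\|_{L^p}$ into a pointwise estimate at the center. The mechanism that actually works (written out in the paper's first proof of Theorem \ref{Lipschitz Continuity}) is the ray-integration estimate: the Hopf-type barrier gives $h(X)\ge c\,h(X_0)(1-|X-X_0|/r)$, integrating $\frac{d}{dr}(v-h)$ along each ray up to the first zero of $v$ yields $\left(h(X_0)/r\right)^p\Leb\left(\{v=0\}\cap B_r\right)\le C\int_{B_r}|\nabla(v-h)|^pdX$, and only the combination with the energy comparison $\int_{B_r}|\nabla(v-h)|^pdX\le C\tau\,\Leb\left(\{v=0\}\cap B_r\right)$ gives $h(X_0)\le C\tau^{1/p}r$; you should import that argument rather than the one you wrote. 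Second, the nondegeneracy competitor $w=\min\{v,\kappa(|X-X_0|-r/2)^+\}$ is not admissible: on $\partial B_r$ it equals $\min\{v,\kappa r/2\}$, which differs from $v$ wherever $v>\kappa r/2$, so $w$ does not glue to $v$ outside $B_r$ in $W^{1,p}$, and comparing $E_\tau$ against an inadmissible function proves nothing. Nondegeneracy must be run by contraposition --- assume $\sup_{B_r}v\le\kappa r/2$, so that the truncation is admissible, and conclude $v\equiv0$ in $B_{r/2}$ --- and even then the energy cost of the truncation lives on $\{v>0\}\cap(B_r\setminus B_{r/2})$ rather than on $B_{r/2}$, so the trace/Caccioppoli bookkeeping of \cite{AC} (Lemma 3.4 there) or \cite{DP} is needed, not a bare volume count. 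A smaller point: weak lower semicontinuity of $\int\langle\A(X,Df),Df\rangle dX$ requires convexity of $\xi\mapsto\langle\A(X,\xi),\xi\rangle$, which does not follow from the monotonicity (iii) and homogeneity (iv) alone; the paper tacitly assumes a variational structure for $\A$ throughout (it treats $\A$-harmonic functions as minimizers of this energy), so you should state that as a hypothesis rather than claim to derive it.
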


With the free boundary technology available nowadays, it is not
hard to establish the existence as well as optimal regularity and
nondegeneracy of a minimizer to the above problem. Basically there
are two procedures that lead to these results: one can directly
approach the minimization problem, by mixing the strategy as in
\cite{AC} and \cite{DP}. Another charming and fruitful strategy is
to employ a regularizing technique method, basically by mixing the
estimates in \cite{MT} or \cite{T1} and \cite{DPS}, see also
\cite{K}. Mathematically the latter is described as follows:
choose your favorite nonnegative bounded real function $\beta$,
such that $\supp \beta = [0,1]$ and, say, $\int_0^1
\beta(\zeta)d\zeta =1$. For each $\varepsilon > 0$ define
$$
    \beta_\varepsilon(t) := \dfrac{1}{\varepsilon} \beta \left (
    \dfrac{t}{\varepsilon} \right ),
$$
and finally put $B_\varepsilon(s) := \int_0^s
\beta_\varepsilon(\zeta) d\zeta$. The $\varepsilon$ regularizing
problem then becomes
\begin{equation}\label{Regularizing Min Problem in AC}
    \textrm{Minimize } \quad \left \{
        {E}^\varepsilon_\tau(f) := \displaystyle \int\limits_{\mathcal{O}}
        \left \{ \langle \A(X,Df), Df \rangle + {\tau} B_\varepsilon(f) \right \} dX
        \suchthat f \in W^{1,p}(\mathcal{O}),~
        f\big |_{\partial \mathcal{O}} = \psi
         \right \},
\end{equation}
The existence of minimizers $v^\varepsilon$ of (\ref{Regularizing
Min Problem in AC}) is standard. One then proves Lipschitz
regularity and nondegeneracy for $v^\varepsilon$, uniform in
$\varepsilon$. By letting $\varepsilon \searrow 0$, up to a
subsequence, $v^\varepsilon$ will converge to a locally Lipschitz
function $v$ that is a minimizer of $E_\tau$.  We omit the details
of the proof of Theorem \ref{AC for A-potentials}.
\begin{proposition}\label{Min is positive around D} There exists a
positive constant $\gamma > 0$, depending  only  on dimension,
$\lambda$, $\partial D$, $\Gamma$ and $\varphi$ such that any
(possible) minimizer $u^\star$ of problem (\ref{Weak Min Problem})
satisfies
$$
    D_\gamma := \left \{ X \in D^C \suchthat \dist
    (X, \partial D) \le \gamma \right \} \subset \{ u^\star > 0
    \}.
$$
\end{proposition}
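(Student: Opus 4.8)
The plan is to establish a uniform lower bound for the distance between the free boundary of a minimizer and $\partial D$, by inserting between $u^\star$ and $\partial D$ a fixed Alt--Caffarelli-type barrier furnished by Theorem \ref{AC for A-potentials}.

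First I would record two preliminary facts about a minimizer $u^\star$ of \eqref{Weak Min Problem}. \emph{(a)} An a priori bound: let $w$ be the $\A$-harmonic replacement of $u^\star$ in the fixed collar $\mathcal{O}:=\{X\in D^C\suchthat \dist(X,\partial D)<\delta_0/2\}$ (so $\L w=0$ in $\mathcal{O}$, $w=\varphi$ on $\partial D$, $w=u^\star$ on the inner face), extended by $u^\star$ elsewhere. Since $\L u^\star\ge 0$, comparison gives $w\ge u^\star$, whence $w>0$ on $\mathcal{O}$ by the strong maximum principle and $w\in\mathcal{V}(\delta_0/2)\subset\mathcal{V}$; the smoothness hypotheses near $\partial D$ of \eqref{Continuity of A close to D} yield a boundary gradient bound $|\partial_\A w|\le C$ on $\partial D$ with $C$ depending only on the data, so $\J(w)\le C_0$, again depending only on the data. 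As $w$ enlarges the positivity volume by at most $|\mathcal{O}|$ and $\varrho_\lambda$ is nondecreasing and $\lambda$-Lipschitz, testing the minimality of $u^\star$ against $w$ gives $\J(u^\star)\le C_0+\lambda|\mathcal{O}|=:M$, with $M$ depending only on dimension, $\lambda$, $\partial D$, $\Gamma$, $\varphi$; by the coercivity \eqref{coersivity} and property (iii) of $\Gamma$ this also bounds $\int_{\partial D}\partial_\A u^\star\, d\H$ and, through Proposition \ref{Energ Estimate}, $\|\nabla u^\star\|_{L^p(D^C)}$, by data-dependent constants. \emph{(b)} If $u^\star\in\mathcal{V}(\delta)$, then $u^\star$ is $\A$-harmonic, nonnegative and equal to $\varphi>0$ on $\partial D$ in $D_\delta$, so Harnack forbids it to vanish there: every free boundary point lies at distance $\ge\delta$ from $\partial D$. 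The issue is that $\delta$ is not a priori uniform, and it is this that must be upgraded.

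For the upgrade, apply Theorem \ref{AC for A-potentials} on $\mathcal{O}=D_{\delta_0/2}$ with datum $\varphi$ on $\partial D$, $0$ on the inner face $\{\dist=\delta_0/2\}$, and with $\tau$ a sufficiently large multiple of $\lambda$, the factor depending only on $\partial D$ and $\varphi$; let $v=v_\tau$ be the resulting minimizer of $E_\tau$, which by Theorem \ref{AC for A-potentials} is nonnegative, Lipschitz and nondegenerate with constants controlled by $\tau=\tau(\lambda)$ and the data. I claim $\{v>0\}\subset\{u^\star>0\}$. Granting this, since $v=\varphi\ge\min_{\partial D}\varphi>0$ on $\partial D$ and $v$ is Lipschitz, $v>0$ throughout $D_\gamma$ with $\gamma:=\min_{\partial D}\varphi/(2\Lip(v))$ --- a constant of the allowed form --- and then $D_\gamma\subset\{v>0\}\subset\{u^\star>0\}$, as desired. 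To prove the claim one supposes $A:=\Leb(\{v>0\}\cap\{u^\star=0\})>0$ and tests the minimality of $u^\star$ against the competitor $\tilde u$ obtained by replacing $u^\star$ on $\mathcal{O}$ with $u^\star\vee\hat v$, where $\hat v$ is the zero-extension of $v$, and then $\A$-harmonically correcting $\tilde u$ inside a fixed thin sub-collar so that $\tilde u\in\mathcal{V}$. The lattice identities $E_\tau(f\wedge g)+E_\tau(f\vee g)=E_\tau(f)+E_\tau(g)$, valid pointwise for the densities $\langle\A(X,D\cdot),D\cdot\rangle$ and $\chi_{\{\cdot>0\}}$, together with the minimality $E_\tau(v)\le E_\tau(u^\star\wedge\hat v)$, force the Dirichlet-type energy of $u^\star\vee\hat v$ on $\mathcal{O}$ to drop by at least $\tau A$ below that of $u^\star$; via the divergence identity of Lemma \ref{Lf is a Radon Measure} --- both functions carry the datum $\varphi$ on $\partial D$ and the values of $u^\star$ on the remaining boundary --- this translates into a strict drop of the flux $\J$ by an amount comparable to $\tau A$, while the penalty $\varrho_\lambda$ grows by at most $\lambda A$. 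Choosing the multiple of $\lambda$ defining $\tau$ large enough makes the value of the functional at $\tilde u$ strictly below that at $u^\star$, contradicting minimality; hence $A=0$.

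The main obstacle is precisely this comparison. Because the leading term of the functional, the flux, lives on the fixed boundary $\partial D$ and is nonlocal, $u^\star$ is not a priori an $\A$-supersolution, so $\{v>0\}\subset\{u^\star>0\}$ cannot be read off from the maximum principle and must be squeezed out of the two minimality properties simultaneously. The delicate points are (i) restoring membership in $\mathcal{V}$ after passing to the maximum, without destroying the energy balance, and (ii) converting the drop of the Dirichlet energy recorded by the $E_\tau$-comparison into a quantitatively larger drop of the nonlocal flux, so as to overwhelm the $\lambda A$ charged by the volume penalty; this is what pins down $\tau$ as a multiple of $\lambda$, and hence the dependence of $\gamma$ on $\lambda$. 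Everything else --- the a priori bound, the Harnack observation, and the concluding geometric argument from the Lipschitz bound and nondegeneracy of $v$ --- is routine.
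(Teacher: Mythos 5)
Your proposal is correct and follows essentially the same route as the paper: an Alt--Caffarelli auxiliary minimizer $v_\tau$ with $\tau$ a large multiple of $\lambda$ serves as a barrier, the double comparison $E_\tau(v)\le E_\tau(u^\star\wedge v)$ and $\J_\lambda(u^\star)\le \J_\lambda(u^\star\vee v)$ (the lattice identity being the same bookkeeping the paper performs on the set $\Theta=\{v>u^\star\}$) pits the Dirichlet-energy drop $\tau A$ against the penalty charge $\lambda A$ after converting the former into a flux drop via Lemma \ref{Lf is a Radon Measure}, forcing $\{v>0\}\subset\{u^\star>0\}$. The only differences are cosmetic --- you run the comparison once on the whole collar $D_{\delta_0/2}$ rather than on tangent annuli $5B\setminus B$ around each boundary point, and your preliminary a priori bound in step (a) is not actually needed.
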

\begin{proof}
Let $P \in \partial D$ be fixed and $B = B_r(Y) \subset D$ satisfy
$$
    \overline{B}\cap \partial D = \{P\}.
$$
By a compactness argument on $\partial D$, we can select an $r <
5\delta_0$, where $\delta_0$ is the universal number from
(\ref{Continuity of A close to D}), such that the above holds for
a.e. $P \in \partial D$. In view of Theorem \ref{AC for
A-potentials}, there exists a minimization, $v = v(\tau)$ to
\begin{equation}\label{AC for A-potentials}
    \textrm{Min } \quad \left \{ \begin{array}{c}
        E_\tau(f) := \displaystyle \int\limits_{5B \setminus
        B} \left \{ \langle \A(X,Df), Df \rangle + \tau \chi_{\{f > 0\}} \right \} dX
        \suchthat f \in W^{1,p}(5B\setminus B), \\
        f\big |_{\partial 5B} = 0, \quad \textrm{ and } \quad
        f\big |_{\partial B} = \inf\limits_{\partial D} \varphi
        \end{array}
    \right \}.
\end{equation}
Here $\tau > 0$ is a constant to be chosen later.  For future
reference, let us label the following the sets
$$
    \Theta := \{ X \in D^C\cap 5B \suchthat v(X) > u(X) \} \quad \textrm{and} \quad
    \mathcal{O} := \{ X \in D^C \cap 5B \suchthat v(X) > 0 \}.
$$
It is important to keep in mind that, from the properties of $v$,
we can ensure that there exist constants $\theta, \hat{\delta}
> 0$, depending only on $\A$, $\tau$, $\partial D$ and $\inf \varphi$ such
that
\begin{equation}\label{Prop Min is positive around D - Key Ingredient}
    |\mathcal{O} \cap D^C| > \theta(\tau), \quad \textrm{ and } \quad
    \dist \left ( P, \left (\partial \mathcal{O} \cap D^C\right) \right) > \hat{\delta}(\tau).
\end{equation}
We now define the function $\mathfrak{m} \colon 5B \setminus B \to
\R_{+}$ as
$$
    \mathfrak{m}(X) := \left \{
        \begin{array}{cll}
            v(X) & \textrm{in}& (D \setminus B) \cap 5B \\
            \min\{u(X), v(X) \} & \textrm{in}& D^C \cap 5B,
        \end{array}
    \right.
$$
Since $\mathfrak{m}$ competes with $v$ in the minimization problem
(\ref{AC for A-potentials}), we have $E_\tau(v) \le
E_\tau(\mathfrak{m})$. Hence, the following inequality holds
\begin{equation}\label{Prop Min is positive around D - 1st ineq}
        \displaystyle \int_\Theta \langle \A(X,Du),Du \rangle dX -
        \displaystyle \int_\Theta \langle \A(X,Dv),Dv \rangle
        dX \ge \tau \left \{ \Leb \left ( \mathcal{O} \right ) -
        \Leb \left (  \{u > 0 \} \cap \mathcal{O} \right ) \right \}
\end{equation}
Our strategy now is to obtain a competing inequality to (\ref{Prop
Min is positive around D - 1st ineq}). To this end, let us
consider the function $\mathfrak{M} \colon D^C \to \R_{+}$ defined
as
$$
    \mathfrak{M}(X) := \max \{ v(X), u(X) \},
$$
and compare it with $u$ in terms of the minimization problem
(\ref{Weak Min Problem}). Using the minimality of $u$, we obtain
\begin{equation}\label{Prop Min is positive around D 2nd ineq}
    \begin{array}{lll}
        \varrho_\lambda \Big ( \Leb \left ( \mathcal{O} \right ) + \Leb \left (  \{u > 0 \}
        \right ) - \Leb \left (  \{ u > 0 \} \cap \mathcal{O} \right ) \Big )
        &-&
         \varrho_\lambda \Big ( \Leb \left ( \{u > 0 \} \right ) \Big ) \\
         &\ge&
        \displaystyle \int_{\partial D} \Gamma(X, \partial_\A
        u) - \Gamma(X, \partial_\A \mathfrak{M}) d\H.
    \end{array}
\end{equation}
From properties 1 and 2 of $\Gamma$, and the Lipschitz continuity
of the penalty term $\varrho_\lambda$, we conclude from (\ref{Prop
Min is positive around D 2nd ineq}) that there exists a small
constant $\alpha_0 = \alpha_0(\partial D, \Gamma)$ such that
\begin{equation}\label{Prop Min is positive around D 3nd ineq}
        \dfrac{\lambda}{\alpha_0} \big ( \Leb \left ( \mathcal{O} \right )
        - \Leb \left ( \{ u > 0 \} \cap \mathcal{O} \right ) \big ) \ge
        \displaystyle \int_{\partial D} \left \{ \partial_\A u - \partial_\A \mathfrak{M}  \right \} d\H.
\end{equation}
Applying the Divergence Theorem (see the representation in
(\ref{weak Div Thm})) and taking into account that $v(X)\L v(X) =
0$ a.e., we obtain
\begin{equation}\label{Prop Min is positive around D 4nd ineq}
        \displaystyle \int_{\partial D} \left \{ \partial_\A u - \partial_\A \mathfrak{M}  \right \} d\H
        \ge \dfrac{1}{\sup\limits_{\partial D} \varphi } \displaystyle \int_\Theta \langle ADu,Du \rangle -
        \langle ADv,Dv \rangle dX.
\end{equation}
As a combination of (\ref{Prop Min is positive around D - 1st
ineq}), (\ref{Prop Min is positive around D 3nd ineq}) and
(\ref{Prop Min is positive around D 4nd ineq}) we deduce that
$$
    \sup\limits_{\partial D} \varphi \cdot
    \dfrac{\lambda}{\alpha_0} \Big [ \Leb \left (\mathcal{O} \right )
    - \Leb \left ( \{ u > 0 \} \cap \mathcal{O} \right ) \Big ]
    \ge \tau \Big [ \Leb \left (\mathcal{O} \right )
    - \Leb \left ( \{ u > 0 \} \cap \mathcal{O} \right ) \Big ].
$$
Thus, if $\tau$ is chosen big enough, depending only upon
dimension, $\A$, $\partial D$ and $\varphi$, there must be the
case that
$$
   \mathcal{O} \subset \{ u^\star > 0 \}.
$$
This together with (\ref{Prop Min is positive around D - Key
Ingredient}) ultimately finishes the proof of the Proposition.
\end{proof}
In order to advance in our analysis, we need another related free
boundary problem: an $\A$-obstacle type problem, which again, with
the free boundary technology available, is easy accomplished and
therefore we omit the details.
\begin{theorem}\label{obstable problem} Let $\mathcal{M}$ be a measurable set in
$D^C$. There exists a unique function $\mathfrak{b}$, solution to
the following obstacle-type problem:
$$
    \textrm{Min } \left \{ \int_{D^C} \langle \A(X,Df),Df \rangle dX
    \suchthat f \in W^{1,p}(D^C)~ f = \varphi \textrm{ on } \partial D
    \textrm{ and } f \le 0 \textrm{ in } \mathcal{M}
    \right \}.
$$
Furthermore, $\sup \varphi \ge \mathfrak{b}\ge 0$, $\L
\mathfrak{b} = 0$ in $\{ \mathfrak{b} > 0 \}$ and $\int
\mathfrak{b} \L \mathfrak{b} dX = 0$.
\end{theorem}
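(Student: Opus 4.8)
The plan is to obtain $\mathfrak{b}$ by the direct method in the class
$$
\mathcal{K}_{\mathcal{M}} := \left\{ f \in W^{1,p}(D^C) \suchthat f = \varphi \textrm{ on } \partial D \textrm{ and } f \le 0 \textrm{ a.e. in } \mathcal{M} \right\},
$$
and then to extract the stated variational consequences from the Euler–Lagrange inequality. First I would check that $\mathcal{K}_{\mathcal{M}}$ is nonempty (the $p$-harmonic extension $\mathfrak{h}$ of Proposition \ref{Energ Estimate} composed with a suitable truncation, or simply $\varphi^+$ extended so that it is supported near $\partial D$ and zero on $\mathcal{M}$, lies in it), convex, and sequentially weakly closed in $W^{1,p}(D^C)$: the boundary trace condition passes to weak limits by compactness of the trace operator, and the pointwise constraint $f\le 0$ on $\mathcal{M}$ passes to the limit because a weakly convergent subsequence converges a.e. The energy $f \mapsto \int_{D^C}\langle \A(X,Df),Df\rangle\,dX$ is bounded below by $\lambda\|Df\|_{L^p}^p$ by (c)(i), hence coercive on $\mathcal{K}_{\mathcal{M}}$ modulo the fixed boundary datum (one subtracts $\mathfrak{h}$ and uses Poincaré), and it is convex and $W^{1,p}$-weakly lower semicontinuous: convexity of $\xi\mapsto \langle \A(X,\xi),\xi\rangle$ follows from monotonicity (c)(iii) together with the homogeneity (c)(iv), which makes this function the $p$-homogeneous "energy" whose gradient is (up to a constant) $\A(X,\cdot)$, so De Giorgi–Ioffe lower semicontinuity applies; alternatively one bypasses convexity of the integrand and argues directly, as in Lemma \ref{p harmonic functions with same bondary data}, that along a minimizing sequence $Df_j \to Df$ strongly in $L^p$. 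Either way a minimizer $\mathfrak{b}$ exists, and uniqueness follows from strict convexity (strict monotonicity (c)(iii) forces the energy integrand to be strictly convex on the set where gradients differ, so two minimizers must have the same gradient a.e., and the common boundary trace pins down the constant).

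Next I would record the sign bounds. That $\mathfrak{b}\le \sup_{\partial D}\varphi$ is a comparison/truncation argument: replacing $\mathfrak{b}$ by $\min\{\mathfrak{b},\sup\varphi\}$ does not increase the energy (it only decreases $|D\mathfrak{b}|$ where it acts) and stays in $\mathcal{K}_{\mathcal{M}}$, so by uniqueness it equals $\mathfrak{b}$. That $\mathfrak{b}\ge 0$ is similar: $\max\{\mathfrak{b},0\}$ still satisfies the obstacle constraint on $\mathcal{M}$ (where $\mathfrak{b}\le 0$, the max is $0$) and agrees with $\varphi\ge 0$ on $\partial D$, and it has no larger Dirichlet energy, so again it equals $\mathfrak{b}$.

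Finally, the variational identity. For any $\varphi$-admissible perturbation direction — concretely, for $w\in W^{1,p}_0(D^C)$ with $w\le 0$ on $\mathcal{M}$ — the function $\mathfrak{b}+tw$ lies in $\mathcal{K}_{\mathcal{M}}$ for $t\ge 0$ small (on $\mathcal{M}$: $\mathfrak{b}\le 0$ and $w\le 0$), so $\frac{d}{dt}\big|_{t=0^+}E(\mathfrak{b}+tw)\ge 0$, which by (c)(iv) and the chain rule gives $\int_{D^C}\langle \A(X,D\mathfrak{b}),Dw\rangle\,dX \ge 0$; in particular taking $w=-\psi$ with $\psi\in C^1_0\!\left(\{\mathfrak{b}>0\}\right)$, $\psi\ge 0$ (for which $\mathfrak{b}-t\psi$ is still $\ge 0$ hence admissible, and also $\mathfrak{b}+t\psi$ is admissible since we are away from $\mathcal{M}$'s active part by positivity), both signs are allowed and $\int\langle \A(X,D\mathfrak{b}),D\psi\rangle\,dX=0$, i.e. $\L\mathfrak{b}=0$ in $\{\mathfrak{b}>0\}$. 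For the identity $\int \mathfrak{b}\,\L\mathfrak{b}\,dX=0$: since $\mathfrak{b}\ge 0$, the set where $\L\mathfrak{b}$ (a nonnegative Radon measure by the same reasoning as Lemma \ref{Lf is a Radon Measure}, as the constraint forces superharmonicity) is supported is contained in $\{\mathfrak{b}=0\}\cup(\partial D\cup\mathcal{M})$; testing the equation against $\mathfrak{b}$ itself — justified by approximating $\mathfrak{b}$ by admissible test functions, e.g. $(\mathfrak{b}-\epsilon)^+$, which vanish near $\partial D$ — gives $\int(\mathfrak{b}-\epsilon)^+\,d\mu_{\mathfrak{b}}=\int_{\{\mathfrak{b}>\epsilon\}}\langle\A(X,D\mathfrak{b}),D\mathfrak{b}\rangle\ge 0$, while from the Euler inequality with $w=-(\mathfrak{b}-\epsilon)^+$ the left side is $\le 0$; letting $\epsilon\to 0$ pins it to $0$. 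The main obstacle is the careful bookkeeping in this last step: one must make sure the admissible perturbations genuinely stay in $\mathcal{K}_{\mathcal{M}}$ near $\mathcal{M}$ and near $\partial D$, and that $\L\mathfrak{b}$ really is a well-defined nonnegative measure so that pairing it with $\mathfrak{b}$ makes sense; everything else is a routine application of the direct method plus monotone-operator variational inequalities, which is why the paper omits it.
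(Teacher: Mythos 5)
The paper deliberately omits the proof of this theorem (``with the free boundary technology available\dots we omit the details''), and your argument --- direct method on the convex class $\mathcal{K}_{\mathcal{M}}$, truncation by $0$ and $\sup\varphi$ for the bounds, one-sided variations to get $\L\mathfrak{b}\ge 0$ as a Radon measure, two-sided variations in $\{\mathfrak{b}>0\}$ (legitimate because $\mathfrak{b}\ge 0$ forces $\mathfrak{b}=0$ a.e.\ on $\mathcal{M}$), and pairing with $(\mathfrak{b}-\epsilon)^{+}$ for the last identity --- is precisely the standard proof the author has in mind; I find no gap. The only point worth flagging is that identifying the first variation of $\int\langle\A(X,Df),Df\rangle\,dX$ with $p\int\langle\A(X,D\mathfrak{b}),Dw\rangle\,dX$, hence the minimizer with an $\A$-harmonic function, tacitly requires $\A(X,\cdot)$ to be a gradient field in $\xi$; but this is an implicit structural assumption of the paper itself (it underlies Lemma \ref{p harmonic functions with same bondary data} and Theorem \ref{AC for A-potentials} as well), not a defect of your argument.
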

We now can state and proof our main theorem concerning the
existence of an optimal configuration to weak formulations of
problem (\ref{Minimization Problem}), namely problems
$(\mathfrak{P}^\textrm{weak}_\lambda)$ and
$(\mathfrak{P}_\lambda)$.
\begin{theorem} \label{Exist for penalized problem}
There exists an optimal configuration $\Omega_\lambda^\star$ to
 problem (\ref{Penalized Min Problem}) (the penalized problem $(\mathfrak{P}_\lambda)$).
Furthermore, for a universal modulus of continuity $\sigma$, the
$\A$-potential associated to $\Omega_\lambda^\star$,
$u^\star_\lambda$, is $\sigma$-continuous in $D^C$ and
$\|u^\star_\lambda\|_{C^{\sigma}} \lesssim K(\lambda, D, \varphi,
\Gamma, \A)$.
\end{theorem}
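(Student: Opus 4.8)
The plan is to run the direct method on the weak problem $(\mathfrak{P}^{\mathrm{weak}}_\lambda)$, upgrade the resulting weak minimizer into a genuine $\A$-potential (hence an optimal configuration for $(\mathfrak{P}_\lambda)$), and finally extract the modulus of continuity. First I would take a minimizing sequence $u_j \in \mathcal{V}$ for the weak functional $\J(\cdot) + \varrho_\lambda(|\{\cdot > 0\}\setminus D|)$. By Proposition~\ref{Energ Estimate} the sequence is bounded in $W^{1,p}(D^C)$, so up to a subsequence $u_j \rightharpoonup u^\star$ weakly in $W^{1,p}$ and a.e.; by Lemma~\ref{semi-continuity}, $u^\star \ge 0$, $\L u^\star \ge 0$, $\L u^\star = 0$ in $D_\delta$ for the appropriate $\delta$, and the functional is weakly lower semicontinuous, so $u^\star$ realizes the infimum over $\mathcal{V}$. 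Set $\Omega^\star_\lambda := D \cup \{u^\star > 0\}$.

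The crux, flagged at the end of Section~\ref{SECTION Basic functional and analytic properties}, is that a priori one only knows $\L u^\star \ge 0$ on $D^C$, not $\L u^\star = 0$ in $\{u^\star > 0\}$, so the pair $(\Omega^\star_\lambda, u^\star)$ need not be admissible. This is the main obstacle. To overcome it I would invoke the $\A$-obstacle problem of Theorem~\ref{obstable problem} with obstacle set $\mathcal{M} := \{u^\star = 0\}\cap D^C$, producing $\mathfrak{b}$ with $\sup\varphi \ge \mathfrak{b}\ge 0$, $\mathfrak{b} = \varphi$ on $\partial D$, $\mathfrak{b} \le 0$ on $\{u^\star = 0\}$, $\L\mathfrak{b} = 0$ in $\{\mathfrak{b}>0\}$, and $\int \mathfrak{b}\,\L\mathfrak{b}\,dX = 0$. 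By the comparison/maximum principle $\mathfrak{b} \le u^\star$ (so $\{\mathfrak{b}>0\}\subset\{u^\star>0\}$, hence the penalty term does not increase), while Lemma~\ref{p harmonic functions with same bondary data} together with the representation (\ref{weak Div Thm}) and the identity $\int\mathfrak{b}\,\L\mathfrak{b} = 0$ shows $\int_{D^C}\langle\A(X,D\mathfrak{b}),D\mathfrak{b}\rangle dX \le \int_{D^C}\langle\A(X,Du^\star),Du^\star\rangle dX$, and via the divergence theorem on $\partial D$ (as in (\ref{meaure mu is normal bdry int})) this controls $\partial_\A\mathfrak{b} \le \partial_\A u^\star$ along $\partial D$, so by monotonicity of $\Gamma(X,\cdot)$ one gets $\J(\mathfrak{b}) \le \J(u^\star)$. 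Hence $\mathfrak{b}$ is also a weak minimizer, and now $\L\mathfrak{b} = 0$ in $\{\mathfrak{b}>0\}$, i.e. $\mathfrak{b} = u_{\Omega}$ for $\Omega := D\cup\{\mathfrak{b}>0\}$. A short argument — if $|\{\mathfrak{b}=0\}\cap\{u^\star>0\}|>0$ one could strictly decrease the penalty while not increasing $\J$, contradicting minimality — combined with Proposition~\ref{Min is positive around D}, which forces $D_\gamma \subset \{\mathfrak{b}>0\}$ so that $\Omega$ is an honest open neighborhood of $D$ and $u_\Omega$ solves (\ref{A potential}), shows $(\Omega, u_\Omega)$ is admissible for $(\mathfrak{P}_\lambda)$ and attains the infimum of $\J_\lambda$; relabel $\Omega^\star_\lambda := \Omega$, $u^\star_\lambda := \mathfrak{b}$.

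For the regularity statement, I would argue that $u^\star_\lambda$ is a minimizer of a Bernoulli-type functional and invoke Theorem~\ref{AC for A-potentials}. More precisely, testing the minimality of $u^\star_\lambda$ against competitors obtained by local inward perturbations and using the Lipschitz continuity of $\varrho_\lambda$ (with constant $\lambda$), one shows $u^\star_\lambda$ is a local \emph{almost-minimizer} of the energy $\int \langle\A(X,Df),Df\rangle + \Lambda_\lambda\chi_{\{f>0\}}\,dX$ on $D^C \setminus \overline{D_{\delta_0/2}}$ with $\Lambda_\lambda$ controlled by $\lambda$, $D$, $\varphi$; Theorem~\ref{AC for A-potentials} (applied to the regularized problems when $\A$ is merely bounded measurable, where one only gets a uniform $C^{0,\sigma}$ bound rather than Lipschitz) then yields a modulus $\sigma$ and the bound $\|u^\star_\lambda\|_{C^\sigma}\lesssim K(\lambda,D,\varphi,\Gamma,\A)$. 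Near $\partial D$, where $\A$ and $\varphi$ are smooth by (\ref{Continuity of A close to D}) and $u^\star_\lambda = u_\Omega$ solves $\L u_\Omega = 0$ with boundary data $\varphi$, standard De~Giorgi–Nash–Moser (or Lieberman) estimates give interior Hölder continuity, and these patch together to the global $\sigma$-continuity in $D^C$. The constant $K$ tracks the energy bound from Proposition~\ref{Energ Estimate}, which is where the $\lambda$-dependence enters.
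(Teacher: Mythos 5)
Your overall architecture (direct method on the weak problem, obstacle/harmonic replacement to recover $\L u=0$ in the positivity set, then continuity) is in the spirit of the paper, but there is a genuine gap at the very first step. You run the direct method over $\mathcal{V}=\bigcup_{\delta\searrow 0}\mathcal{V}(\delta)$ and invoke Lemma \ref{semi-continuity} ``for the appropriate $\delta$''. That lemma requires the \emph{whole sequence} to satisfy $\L f_j=0$ in a \emph{fixed} collar $D_\delta$; a minimizing sequence in $\mathcal{V}$ only gives $u_j\in\mathcal{V}(\delta_j)$ with possibly $\delta_j\searrow 0$, in which case the mass of the measures $\L u_j$ can accumulate on $\partial D$, the weak limit need not belong to any $\mathcal{V}(\delta)$, $\partial_\A u^\star$ is then not defined on $\partial D$, and the lower semicontinuity of the boundary flux --- which is proved via the representation (\ref{meaure mu is normal bdry int}), valid only when $f$ is $\A$-harmonic in a fixed neighborhood of $\partial D$ --- can fail. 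The paper resolves precisely this by a stabilization argument: it first minimizes over each fixed class $\mathcal{V}(\delta)$ (where the direct method is legitimate), then uses Proposition \ref{Min is positive around D} together with the obstacle/harmonic-replacement estimates (\ref{uniform continuity Eq03})--(\ref{uniform continuity Eq04}) to show that every such minimizer is $\A$-harmonic in the \emph{universal} collar $D_\gamma$, whence $\J(u_\lambda^{\delta_1})=\J(u_\lambda^{\delta_2})$ for all $\delta_1,\delta_2\le\gamma$ and the infimum over all of $\mathcal{V}$ is attained in $\mathcal{V}(\gamma)$. Without this device your $u^\star$ is not known to minimize the weak problem, so Proposition \ref{Min is positive around D} cannot yet be applied to it and the rest of your argument does not start.

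Two smaller points. The comparison with the obstacle solution goes the other way: $\mathfrak{b}\ge u^\star$, as in (\ref{Existence Thm Eq 01}), not $\mathfrak{b}\le u^\star$; the inclusion $\{\mathfrak{b}>0\}\subset\{u^\star>0\}$ you need follows instead from the constraint $\mathfrak{b}\le 0$ on $\{u^\star=0\}$ combined with $\mathfrak{b}\ge 0$. Likewise a pointwise inequality $\partial_\A\mathfrak{b}\le\partial_\A u^\star$ on $\partial D$ is not what the energy comparison yields; one only gets the integral estimate (\ref{uniform continuity Eq01}) relating the drop of $\int_{\partial D}\Gamma$ to the drop of the $\A$-Dirichlet energy, which is what the paper actually uses. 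Finally, for the modulus of continuity the paper does not pass through almost-minimality of an Alt--Caffarelli functional: it reads off from (\ref{uniform continuity Eq03})/(\ref{uniform continuity Eq04}) that $u_\lambda^\star$ belongs to a De Giorgi class, which gives $C^{0,\alpha}$ for merely bounded measurable $\A$ away from $\partial D$, whereas your route through Theorem \ref{AC for A-potentials} needs $\A(\cdot,\xi)\in C^\epsilon$, not assumed at this stage. Note also that continuity must come \emph{before} you can regard $\{\mathfrak{b}>0\}$ as an open configuration admissible for $(\mathfrak{P}_\lambda)$, so the order of your last two steps should be reversed.
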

\begin{proof}
Before starting the proof, let us explain its strategy. We will
initially establish the existence of a minimizer to a very weak
formulation to  problem (\ref{Penalized Min Problem}). Afterwards
we  ``regularize" the minimizer we have found via a stabilization
phenomenon. Here are the details: Lemma \ref{semi-continuity}
assures, for each $0<\delta <\!\!< 1$, the existence of a function
$u^\delta_\lambda \in \mathcal{V}(\delta)$, satisfying
$$
    \J_\lambda(u^\delta_\lambda) = \min\limits_{V(\delta)}
    \J_\lambda.
$$
Furthermore, by  noticing that the same computation employed in
the proof of Theorem \ref{Min is positive around D} applies if we
restrict ourselves to configurations in $\mathcal{V}(\delta)$, we
know that
$$
    D_\gamma \subset \{ u^\delta_\lambda > 0 \}, \quad \forall
    \delta >0.
$$
Let $B = B_r(X_0)$ be a fixed ball in $D^C$ and $\mathfrak{b}$ be
the solution provided by Theorem \ref{obstable problem} to
\begin{equation}\label{obs problem - uniform continuity}
    \textrm{Min } \left \{ \int_{D^C} \langle \A(X,Df),Df \rangle dX
    \suchthat f \in W^{1,p}(D^C)~ f = \varphi \textrm{ on } \partial D
    \textrm{ and } f \le 0 \textrm{ in } \{ u_\lambda^\delta = 0 \} \setminus B
    \right \}.
\end{equation}
We also consider $\mathfrak{h}$ to be the $\A$-harmonic function
in $B$ that agrees with $u_\lambda^\delta$ on $B^C$. It is
standard to verify that
\begin{equation}\label{Existence Thm Eq 01}
    0\le u \le \mathfrak{b} \le \mathfrak{h} \le \sup \varphi.
\end{equation}
As before, (more precisely, as in the proof of Proposition
\ref{Min is positive around D}) taking into account that $\int
\mathfrak{b} \mathfrak{L} \mathfrak{b} dX = 0$, we find
\begin{equation} \label{uniform continuity Eq01}
    \int_{\partial D} \Gamma (X, \partial_\A u) - \Gamma (X, \partial_\A
    \mathfrak{b}) \ge c_1 \left ( \int_{D^C} \langle \A(X,Du_\lambda^\delta), Du_\lambda^\delta
    \rangle dX -
    \int_{D^C} \langle \A(X, D\mathfrak{b}), D \mathfrak{b}\rangle dX \right
    ),
\end{equation}
for a universal positive constant $c_1 > 0$. However,
$\mathfrak{h}$ competes with $\mathfrak{b}$ in the obstacle
problem (\ref{obs problem - uniform continuity}), thus,
(\ref{uniform continuity Eq01}) becomes
\begin{equation} \label{uniform continuity Eq02}
     \int_{\partial D} \Gamma (X, \partial_\A u) - \Gamma (X, \partial_\A
    \mathfrak{b}) \ge c_1 \left ( \int_{D^C} \langle A(X,Du_\lambda^\delta), Du_\lambda^\delta
    \rangle dX -
    \int_{D^C} \langle \A(X, D\mathfrak{h}), D \mathfrak{h}\rangle dX \right ).
\end{equation}
For the moment, let us assume $p \ge 2$. If we take into account
Lemma \ref{p harmonic functions with same bondary data}, we can
enhance the estimate by below in (\ref{uniform continuity Eq02})
as
\begin{equation} \label{uniform continuity Eq02.5}
     \int_{\partial D} \Gamma (X, \partial_\A u) - \Gamma (X, \partial_\A
    \mathfrak{b}) \ge c_2 \left ( \int_{D^C} \left |
    \nabla \left (u_\lambda^\delta - \mathfrak{h}\right )(X) \right|^p  dX
\right ),
\end{equation}
for an appropriate positive but small constant $c_2$. Our next
step is to compare $u_\lambda^\delta$ and $\mathfrak{b}$ in terms
of the functional $\J_\lambda$. By doing so, in view of
(\ref{uniform continuity Eq02.5}), we obtain
\begin{equation} \label{uniform continuity Eq03}
     \lambda \Leb \left ( \{ X \in B_r(X_0) \suchthat u_\lambda^\delta(X) = 0 \} \right ) \ge
     c_3  \left ( \int_{D^C} \left |
    \nabla \left (u_\lambda^\delta - \mathfrak{h}\right )(X) \right|^p  dX \right ),
\end{equation}
for another constant $c_3>0$, depending on dimension, $\A$, $\sup
\varphi$, and $\Gamma$. If $1 < p \leq 2$, we obtain
\begin{equation} \label{uniform continuity Eq04}
     \left [\lambda \Leb \left (  \{ X \in B_r(X_0) \suchthat u_\lambda^\delta(X) = 0 \}
     \right )\right ]^{p/2} \times
     \left [ \int_{D^C} |\nabla u_\lambda^\delta|^p dX \right ]^{1 - \frac{p}{2}} \ge
     c_3 \left ( \int_{D^C} \left |
    \nabla \left (u_\lambda^\delta - \mathfrak{h}\right )(X) \right|^p  dX \right ).
\end{equation}
In any case, our conclusion is that if $B_r(X_0) \subset
D_\gamma$, then $|\{ X \in B_r(X_0) \suchthat u_\lambda^\delta(X)
= 0 \} | = 0$ and consequently, from either (\ref{uniform
continuity Eq03}) or (\ref{uniform continuity Eq04}),
$u_\lambda^\delta$ is $\A$-harmonic there. Of course $
    \J(u_\lambda^{\delta_1}) \le  \J(u_\lambda^{\delta_2}),
$ provided $\delta_1 \le \delta_2$. However, from the fact that
$\L u_\lambda^\delta = 0$ in $D_\gamma$ we have a much stronger
conclusion:
$$
    \J(u_\lambda^{\delta_1}) =  \J(u_\lambda^{\delta_2}),
$$
whenever $\delta_1, ~ \delta_2 \le \gamma$. We have proven the
existence of a minimizer $u_\lambda^\star$ to
$(\mathfrak{P}_\lambda^\mathrm{weak})$, that is, problem
(\ref{Weak Min Problem}).
\par
Our next step is now to prove that $\Omega^\star := \{
u_\lambda^\star
> 0 \}$ is a minimizer to problem (\ref{Penalized Min Problem}). For
that, we have to show
$$
    \L u_\lambda^\star = 0 \textrm{ in } \Omega^\star.
$$
Well, but again it is a standard argument to show from either
(\ref{uniform continuity Eq03}) or (\ref{uniform continuity Eq04})
that $u_\lambda^\star$ belongs to an appropriate De Giorgi's class
(recall $\mathfrak{h}$ is H\"older continuous by elliptic
estimates). Therefore, there indeed exists a modulus of continuity
$\sigma$ ($\sigma (t) = |t|^\alpha$, for some $\alpha>0$), such
that
$$
  \left |  u(X) - u(Y) \right | \le C \lambda \sigma(|X-Y|).
$$
In order to prove that $\mathfrak{L} u = 0$ in $\{u > 0\}$, we
argue as follows: let $X_0 \in \{u > 0\}$ be a generic point. By
the continuity of $u$, there exists an $r_0 >0$ such that
$B_{r_0}(X_0) \subset \{ u > 0 \}$. Therefore, in view of
(\ref{uniform continuity Eq03}) or (\ref{uniform continuity
Eq04}), we conclude, as before that
$$
    u = \mathfrak h \textrm{ in } B_{r_0}(X_0),
$$
and the Theorem is finally proven.
\end{proof}

\section{Existence of an optimal shape to problem (\ref{Minimization Problem}) in low dimensions} \label{SECTION - Existence up to dimension p}

In this section, upon a technical restriction on the dimension, we
will show that the original volume constrained problem
(\ref{Minimization Problem}) admits an optimal configuration. The
theory that addresses the existence of an optimal design for
problem (\ref{Minimization Problem}) in all dimensions will be
developed in section \ref{SECTION - Existence theory in any
dimension}.
\par
Our strategy is based on a limiting analysis on the penalized
problem (\ref{Penalized Min Problem}). For that, we initially need
a simple lemma.
\begin{lemma}\label{control on gradient} There exists a constant $C>0$, depending on
$\A$, $\Gamma$, $D$ and $\varphi$, but independent of $\lambda$,
such that if $u^\star_\lambda$ is the $\A$-potential associated to
an optimal shape $\Omega^\star_\lambda$ for problem
(\ref{Penalized Min Problem}), then
$$
    \int_{D^C} |\nabla u_\lambda^\star (X)|^p dX < C.
$$
\end{lemma}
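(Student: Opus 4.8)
The plan is to reduce the statement to Proposition \ref{Energ Estimate}. The constant produced there is already claimed to depend only on dimension, $\A$, $D$, $\varphi$ and $\Gamma$, so the only point that genuinely requires an argument is that the minimal value of the penalized problem (\ref{Penalized Min Problem}) is bounded above \emph{uniformly in $\lambda$}. Here the specific structure of the penalty term (\ref{penalization term}) is exactly what is needed: any fixed competing configuration that already respects the volume constraint (\ref{volume constraint}) pays no penalty at all, so testing $\J_\lambda$ against such a configuration gives a bound on $\min \J_\lambda$ that is independent of $\lambda$.

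Concretely, I would first fix, once and for all, a configuration $\Omega_0 \supset D$ with $\Leb(\Omega_0 \setminus D) \le \iota$ whose flux $\J(\Omega_0)$ is finite --- for instance a thin smooth tubular neighbourhood of $\overline D$, or the obstacle solution $\mathfrak{b}$ of Theorem \ref{obstable problem} with obstacle set equal to $D^C$ minus a thin collar of $\partial D$ of volume at most $\iota$, chosen so that $\Omega_0 \supset D_\delta$ for some $\delta>0$. Finiteness of $\J(\Omega_0)$ is the one place the mild hypothesis (\ref{Continuity of A close to D}) enters: since $\A$ is Dini continuous and $\varphi$ is smooth in a neighbourhood of $\partial D$, and the associated $\A$-potential is $\A$-harmonic near $\partial D$, elliptic regularity up to the boundary gives $\partial_\A u(\Omega_0) \in L^\infty(\partial D)$, so that $\J(\Omega_0) =: C_0 < \infty$, with $C_0$ depending only on $\A, D, \varphi, \Gamma$ (and the fixed $\iota$), never on $\lambda$. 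Since $\varrho_\lambda(\Leb(\Omega_0\setminus D)) = \lambda(\Leb(\Omega_0\setminus D) - \iota)^+ = 0$, we get $\J_\lambda(\Omega_0) = \J(\Omega_0) = C_0$. This construction of a single admissible competitor with $\lambda$-independent finite flux is, in my view, the only real obstacle; everything else is bookkeeping on top of the energy estimate already established.

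Now let $\Omega^\star_\lambda$ be the optimal configuration provided by Theorem \ref{Exist for penalized problem}, with $\A$-potential $u^\star_\lambda$. Since $\Omega_0 \supset D$ is admissible for (\ref{Penalized Min Problem}), minimality gives $\J_\lambda(\Omega^\star_\lambda) \le \J_\lambda(\Omega_0) = C_0$, and because the penalty term $\varrho_\lambda$ is nonnegative this forces $\int_{\partial D}\Gamma(X, \partial_\A u^\star_\lambda)\, d\H \le C_0$ with $C_0$ independent of $\lambda$. At this stage I would simply rerun the argument in the proof of Proposition \ref{Energ Estimate} with $u_j$ replaced by $u^\star_\lambda$, the only input being precisely this uniform bound on $\int_{\partial D}\Gamma(X,\partial_\A u^\star_\lambda)\,d\H$: the chain of estimates culminating in (\ref{Energ Estimate - Ineq 4}) bounds $\int_{\partial D}\Gamma\big(Y, \|\nabla u^\star_\lambda\|_{L^p(D^C)}^p\big)\,d\mathcal{H}^{n-1}(Y)$ by $C_3(1 + C_0)$, with $C_3 = C_3(\A, D, \varphi, \Gamma)$, and then the coercivity (\ref{coersivity}) of $t \mapsto \int_{\partial D}\Gamma(X,t)\,d\H$ forces $\|\nabla u^\star_\lambda\|_{L^p(D^C)}^p \le C$ for some $C = C(\A, \Gamma, D, \varphi)$ independent of $\lambda$, which is exactly the claim. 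As a variant that sidesteps any boundary-regularity discussion, one may instead work in the weak class $\mathcal{V}$ of (\ref{def of V funct set}) and pick $\Omega_0$ so that its potential lies in $\mathcal{V}(\gamma)$ with $\gamma$ as in Proposition \ref{Min is positive around D}, so that $\A$-harmonicity near $\partial D$ --- hence finiteness of the flux --- is built into the admissible set.
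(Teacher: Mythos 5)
Your proposal is correct and follows essentially the same route as the paper: fix one admissible configuration with $\Leb(\,\cdot\,\setminus D)\le\iota$ so the penalty vanishes and minimality yields a $\lambda$-independent bound on $\int_{\partial D}\Gamma(X,\partial_\A u^\star_\lambda)\,d\H$, then convert this into a bound on $\|\nabla u^\star_\lambda\|_{L^p(D^C)}^p$ via ellipticity, Jensen's inequality and the coercivity (\ref{coersivity}). The paper runs the second step directly from the measure representation of Lemma \ref{Lf is a Radon Measure} rather than by citing Proposition \ref{Energ Estimate}, but the computation is the same.
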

\begin{proof}
Let $\mathcal{O}$ be your favorite smooth configuration
surrounding $D$ that satisfies
$$
    \Leb \left ( \mathcal{O} \setminus D \right ) = \iota,
$$
and let $\omega$ be its $\A$-potential, i.e., the $\A$-harmonic
function in $\mathcal{O} \setminus D$ taking $\varphi$ and $0$ as
boundary data on $\partial D$ and $\partial \mathcal{O}$
respectively. By the minimality property of
$\Omega^\star_\lambda$, we know
\begin{equation}\label{bound grad Eq01}
    \begin{array}{lll}
        \displaystyle \int_{\partial D} \Gamma(X, \partial_\A u^\star_\lambda) d\H
        &\le& \J_\lambda (\Omega^\star_\lambda) \\
        &\le& \J_\lambda (\mathcal{O}) \\
        & = & \displaystyle \int_{\partial D} \Gamma(X, \partial_\A \omega)
        d\H \\
        &=& \overline{C}_0,
    \end{array}
\end{equation}
where $\overline{C}_0$ is universal, as it depends only on you
choice for $\mathcal{O}$. On the other hand, using the results and
notations of Lemma \ref{Lf is a Radon Measure}, we have
\begin{equation}\label{bound grad Eq02}
    \begin{array}{lll}
        \displaystyle \int_{\R^n \setminus D} \langle \A(X,Du_\lambda^\star), Du_\lambda^\star
        \rangle dX &=& \displaystyle \int_{\R^n \setminus D}
        u_\lambda^\star(X) d\mu_{u_\lambda^\star}(X) \\
        &\le& \sup\limits_{\partial D} \varphi
        \cdot \mu_{u_\lambda^\star}(\R^n \setminus D) \\
        &=& \sup\limits_{\partial D} \varphi \cdot
        \displaystyle \int_{\partial D} \partial_\A
        u^\star_\lambda(S) d\mathcal{H}^{n-1}(S).
    \end{array}
\end{equation}
From ellipticity and (\ref{bound grad Eq02}), we conclude
\begin{equation}\label{bound grad Eq03}
    \underline{c}_1 \int_{D^C} |\nabla u^\star_\lambda(X)|^p dX \le
    \dfrac{1}{\mathcal{H}^{n-1}(\partial D)} \int_{\partial D} \partial_\A
    u^\star_\lambda(S)d\mathcal{H}^{n-1}(S),
\end{equation}
where $\underline{c}_1$ is a positive number that depends on $\A$,
$\varphi$ and $D$. Now, for each $Y \in \partial D$ fixed, we
obtain from (\ref{bound grad Eq03})
\begin{equation}\label{bound grad Eq04}
    \begin{array}{lll}
        \Gamma \left (Y, \underline{c}_1  \displaystyle\int_{D^C} |\nabla u^\star_\lambda(X)|^p dX
        \right ) &\le&
        \Gamma \left (Y,  \dfrac{1}{\mathcal{H}^{n-1}(\partial D)}
         \displaystyle \int_{\partial D} \partial_\A
        u^\star_\lambda(S)d\mathcal{H}^{n-1}(S) \right ) \\
        &\le & \dfrac{1}{\mathcal{H}^{n-1}(\partial D)}  \displaystyle \int_{\partial D}
        \Gamma \left (Y,  \partial_\A
        u^\star_\lambda(S) \right ) d\mathcal{H}^{n-1}(S).
    \end{array}
\end{equation}
In the last inequality we have used Jensen's Theorem. If we
integrate (\ref{bound grad Eq04}) with respect to $Y$ over
$\partial D$, we reach the following conclusion
\begin{equation}\label{bound grad Eq05}
    \int_{\partial D}
    \Gamma \left (Y, \underline{c}_1  \displaystyle\int_{D^C} |\nabla u^\star_\lambda(X)|^p dX
        \right ) d\mathcal{H}^{n-1}(Y) \le \overline{C}_2
         \displaystyle \int_{\partial D} \Gamma(X, \partial_\A u^\star_\lambda)
         d\H,
\end{equation}
where $\overline{C}_2$ depends only on $\partial D$ and the
non-linearity $\Gamma$. Finally, if we combine (\ref{bound grad
Eq01}), (\ref{bound grad Eq05}) and (\ref{coersivity}), we deduce
that there must exist a constant $C>0$ depending only on $\A$,
$\Gamma$, $D$ and $\varphi$, such that
\begin{equation}\label{bound grad Eq08}
        \int_{D^C} |\nabla u_\lambda^\star(X)|^p dX \le
        C,
\end{equation}
which is precisely the thesis of the Lemma.
\end{proof}
\begin{theorem} \label{existence of a minimum up to dim p}
Assume the dimension $n$ is less than $p$. Then there exists an
optimal configuration $\Omega^\star$ to problem (\ref{Minimization
Problem}).
\end{theorem}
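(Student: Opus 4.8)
The plan is to pass to the limit $\lambda \to \infty$ along the optimal configurations $\Omega^\star_\lambda$ of the penalized problems $(\mathfrak{P}_\lambda)$ produced by Theorem \ref{Exist for penalized problem}. Write $u_\lambda := u^\star_\lambda$ for the associated $\A$-potential. The first step is to collect the uniform bounds. By Lemma \ref{control on gradient}, the gradients are bounded in $L^p(D^C)$ uniformly in $\lambda$; since $n < p$, the Morrey embedding $W^{1,p}(D^C) \hookrightarrow C^{0,1-n/p}(\overline{D^C})$ gives a uniform H\"older bound on $u_\lambda$, with modulus and constant independent of $\lambda$. Hence, up to a subsequence, $u_\lambda \rightharpoonup u_\infty$ weakly in $W^{1,p}(D^C)$ and uniformly on compact sets, with $u_\infty \ge 0$, $u_\infty = \varphi$ on $\partial D$, $\L u_\infty \ge 0$, and $\L u_\infty = 0$ near $\partial D$ (the last two from Lemma \ref{semi-continuity}, since each $u_\lambda$ is $\A$-harmonic in $D_\gamma$ by Proposition \ref{Min is positive around D}, uniformly in $\lambda$).

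The second step is the volume bound. From the minimality of $\Omega^\star_\lambda$ compared against the fixed competitor $\mathcal{O}$ used in Lemma \ref{control on gradient}, we have
\begin{equation*}
    \lambda\,(\,\Leb(\Omega^\star_\lambda \setminus D) - \iota\,)^{+} \le \J_\lambda(\mathcal{O}) = \overline{C}_0,
\end{equation*}
so $\Leb(\Omega^\star_\lambda \setminus D) \le \iota + \overline{C}_0/\lambda \to \iota$. Combined with Fatou applied to $\chi_{\{u_\lambda > 0\}} \to \chi_{\{u_\infty > 0\}}$ pointwise a.e. (using uniform convergence), this yields $\Leb(\{u_\infty > 0\} \setminus D) \le \iota$, so $\Omega_\infty := \{u_\infty > 0\}$ is volume-admissible.

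The third and crucial step is to show that $(\Omega_\infty, u_\infty)$ is an \emph{admissible} pair, i.e. $\L u_\infty = 0$ in $\Omega_\infty$, and that $\Omega_\infty$ actually minimizes $\J$ under the volume constraint \eqref{Minimization Problem}. For admissibility: fix $X_0$ with $u_\infty(X_0) > 0$; by uniform convergence, $u_\lambda \ge c > 0$ on a small ball $B_{r_0}(X_0) \subset D^C$ for all large $\lambda$, hence $B_{r_0}(X_0) \subset \Omega^\star_\lambda$ and $\L u_\lambda = 0$ there; stability of the equation under weak $W^{1,p}$ convergence of solutions on a \emph{fixed} domain (standard for monotone operators, cf. the argument in Lemma \ref{semi-continuity}) gives $\L u_\infty = 0$ in $B_{r_0}(X_0)$. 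For minimality: let $\Omega$ be any admissible competitor for \eqref{Minimization Problem}, with $\A$-potential $u_\Omega$; since $\Leb(\Omega \setminus D) \le \iota$ we have $\varrho_\lambda(\Leb(\Omega\setminus D)) = 0$, so $\J_\lambda(\Omega) = \J(\Omega)$, and by minimality of $\Omega^\star_\lambda$,
\begin{equation*}
    \J(u_\lambda) \le \J_\lambda(\Omega^\star_\lambda) \le \J(\Omega).
\end{equation*}
Now pass to the liminf on the left using the $W^{1,p}$-weak lower semicontinuity of $\J$ from Lemma \ref{semi-continuity} (the $\varrho_\lambda$-term is nonnegative, so dropping it only helps) to get $\J(u_\infty) \le \liminf \J(u_\lambda) \le \J(\Omega)$. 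Finally, because $u_\infty$ is the $\A$-potential of $\Omega_\infty$ (admissibility just proved), $\J(\Omega_\infty) = \J(u_\infty) \le \J(\Omega)$, so $\Omega_\infty$ is optimal; Lemma \ref{Lemma 1 - less or equal replaced by equal} then upgrades the inequality $\Leb(\Omega_\infty\setminus D)\le\iota$ to equality.

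The main obstacle is the third step — specifically, ensuring that the weak limit $u_\infty$ is genuinely the $\A$-potential of its own positivity set rather than merely a subsolution. The dimensional restriction $n < p$ is exactly what rescues this: it upgrades the uniform energy bound to a \emph{uniform modulus of continuity}, so positivity of $u_\infty$ at a point propagates to positivity of all $u_\lambda$ on a genuine neighborhood, making the passage to the limit in $\L u_\lambda = 0$ a fixed-domain argument rather than a free-boundary one. One should also double-check that $\Omega_\infty$ is open and connected in the sense required by \eqref{A potential} (openness is immediate from continuity of $u_\infty$; if connectedness is needed one works component by component), but these are routine matters compared to the admissibility of the limiting pair.
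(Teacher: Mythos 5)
Your proposal is correct and follows essentially the same route as the paper: the uniform $W^{1,p}$ bound from Lemma \ref{control on gradient}, the compact Sobolev embedding for $n<p$ giving locally uniform convergence, the fixed-domain argument showing $\L u_\infty=0$ on balls where all $u_\lambda$ are eventually positive, the bound $\lambda(\Leb(\Omega^\star_\lambda\setminus D)-\iota)^{+}\le \overline{C}_0$ plus Fatou for volume admissibility, and the comparison chain $\J(\Omega)=\J_\lambda(\Omega)\ge\J_\lambda(\Omega^\star_\lambda)\ge\J(u_\lambda)$ closed by weak lower semicontinuity. No gaps; the extra remarks on openness and on invoking Lemma \ref{Lemma 1 - less or equal replaced by equal} are harmless additions.
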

\begin{proof}
Because of Lemma (\ref{control on gradient}), up to a subsequence,
we can assume $u_\lambda$ converges, as $\lambda \to \infty$,
weakly in $W^{1,p}(D^C)$ to a function $u^\star$. Furthermore,
since we have assumed $n < p$, it follows by the classical Sobolev
Imbedding (see, for instance, \cite{A}), that  passing to another
subsequence if necessary, we can further assume that $u_\lambda$
converges locally uniformly to $u^\star$ in $\R^n \setminus D$ and
thus, $u^\star$ is continuous in $D^C$. We claim that
$$
    \L u^\star = 0 \textrm{ in } \Omega^\star := \{ X \in D^C
    \suchthat u^\star(X) > 0 \}.
$$
Indeed, let $X_0 \in \Omega^\star$ be an arbitrary point in the
set of positivity of $u^\star$, say $u^\star (X_0) = \delta_0 >
0$. By continuity, there exists an $r_0 > 0$ such that
$$
    u^\star(X) > \dfrac{\delta_0}{3} \textrm{ in } B_{r_0}(X_0).
$$
Since $u^\star_\lambda$ converges uniformly to $u^\star$ in
$B_{r_0}(X_0)$, there exists a  $\lambda_0$ large enough, such
that
$$
    u^\star_\lambda(X) > \dfrac{\delta_0}{7} \textrm{ in }
    B_{r_0}(X_0), \quad \forall \lambda > \lambda_0.
$$
However, we have proven that $\L u_\lambda^\star = 0$ in $\{
u^\star_\lambda > 0 \}$. Therefore, for $\lambda$ large enough,
each $u^\star_\lambda$ is $\A$-harmonic in $B_{r_0}(X_0)$. Thus,
as argued in the proof of Lemma \ref{semi-continuity}, we in fact
conclude $u^\star$ is $\A$-harmonic in its set of positivity and
the first claim is proven.
\par
Notice furthermore that, in view of Proposition \ref{Min is
positive around D},
$$
    \dist (\partial D, \partial \Omega^\star) > \gamma,
$$
for some $\gamma > 0$. From inequality (\ref{bound grad Eq01}), we
have, in particular, that
$$
    \lambda \left ( \Leb \left ( \Omega^\star_\lambda \setminus D \right ) - \iota
    \right )^{+} \le \overline{C}_0,
$$
for a universal constant $\overline{C}_0$. Thus, using Fatou's
Lemma we see that
$$
    \begin{array}{lll}
        \left ( \Leb \left ( \Omega^\star \setminus D \right ) - \iota \right )^{+}
        &\le& \liminf\limits_{\lambda \to \infty}
        \left ( \Leb \left ( \Omega^\star_\lambda \setminus D \right ) - \iota \right
        )^{+} \\
        & = & 0.
    \end{array}
$$
That is, our candidate to an optimal design for problem
(\ref{Minimization Problem}), $\Omega^\star$, does satisfy
$$
    \Leb \left ( \Omega^\star \setminus D \right ) \le \iota,
$$
so it competes in problem (\ref{Minimization Problem}). Our final
step is to show that in fact $\Omega^\star$ is an optimal
configuration for problem (\ref{Minimization Problem}). For that,
let $\mathfrak{C}$ be any competing configuration for problem
(\ref{Minimization Problem}), i.e., $\Leb ( \mathfrak{C} \setminus
D )\le \iota$, and $v$ its $\A$-potential, that is, $v$ satisfies
$$
    \L v = 0 \textrm{ in } \mathfrak{C} \setminus D, \quad v =
    \varphi \textrm{ on } \partial D, \quad v = 0 \textrm{ on }
    \partial \mathfrak{C}.
$$
In particular $\mathfrak{C}$ competes with $u_\lambda^\star$ in
$(\mathfrak{P}_\lambda)$, problem (\ref{Penalized Min Problem});
therefore,
$$
    \begin{array}{lll}
        \J(\mathfrak{C}) & := &
        \displaystyle \int_{\partial D} \Gamma \left ( X, \partial_\A v(X) \right )
        d\H \\
        & = & \J_\lambda(\mathfrak{C}) \\
        &\ge& \J_\lambda(\Omega_\lambda^\star) \\
        & \ge &
        \displaystyle\int_{\partial D} \Gamma \left ( X, \partial_\A u_\lambda^\star (X) \right )
        d\H \\
        & \ge & \displaystyle\int_{\partial D} \Gamma \left ( X, \partial_\A u^\star (X) \right )
        d\H + O(1),
    \end{array}
$$
because of the weak lower semicontinuity feature of $\J$ proven in
Lemma \ref{semi-continuity}. Finally if we let $\lambda \to
\infty$ in the above chain of inequalities, the Theorem is proven.
\end{proof}
It is worth to point out that Theorem \ref{existence of a minimum
up to dim p} gives the existence of an optimal configuration to
problem (\ref{Minimization Problem}) with no regularity whatsoever
on the medium. That is, up to this point of the project, the
operator $\A$ has been a general bounded measurable degenerated
elliptic map. However, it turns out that in order to advance on
the study of existence of optimal shapes for problem
(\ref{Minimization Problem}), with no restriction on the
dimension, some extra information is needed to perform appropriate
perturbations on the optimal designs $\Omega_\lambda^\star$. This
will be the contents of the next two sections.

\section{Continuous medium and fine weak geometric properties of the free boundary}
\label{SECTION - Weak Geometric Properties of the free boundary}

In this section we will prove that the free boundary, $\partial
\Omega^\star_\lambda$ enjoys the appropriate weak geometry. This
feature will allow us to produce geometric-measures perturbations
that will ultimately lead us to conclude that, if the penalty term
$\lambda$ is too large, but still finite, then
$\Omega_\lambda^\star$, in fact, obey $  \Leb (
\Omega_\lambda^\star \setminus D ) \le \iota. $ The latter will be
carried out in section \ref{SECTION - Existence theory in any
dimension}.
\par
As highlighted in the last paragraph of the previous section, in
order to accomplish a deeper understanding on the free boundary
$\partial \Omega^\star_\lambda$, we will need to enforce a mild
continuity assumption on the medium. Thus, hereafter, unless
otherwise stated, we shall assume
 that for some $\epsilon > 0$, the map
\begin{equation} \label{continuity of the medium}
    X \mapsto \A(X, \xi) \in C^\epsilon(\R^n \setminus D), \quad \forall \xi \in
    \R^n.
\end{equation}
Mathematically, condition (\ref{continuity of the medium}) enables
$C^{1,\alpha}$ elliptic estimates for solutions to
$$
    \L \psi = 0
$$
and, at least equally important, it unlocks the Hopf's maximum
principle for $\A$-harmonic functions.
\par
As for our first theorem in this section, we will obtain optimal
regularity for $\A$-potentials $u^\star_\lambda$ associated to
optimal configurations $\Omega_\lambda^\star$ of Problem
$(\mathfrak{P}_\lambda)$, that is, Problem (\ref{Penalized Min
Problem}). Notice that inside $\Omega_\lambda^\star$, the function
$u^\star_\lambda$ satisfies $\L u^\star_\lambda$; therefore, it is
locally $C^{1,\alpha}$ smooth. However, from the Hopf's maximum
principle, $u^\star_\lambda$ reaches the free boundary with a
positive slope, thus $\nabla u^\star_\lambda$ jumps from a
positive value to zero through the free boundary, $\partial
\Omega_\lambda^\star$. The conclusion is that the optimal
regularity we can hope for $ u^\star_\lambda$ is Lipschitz
continuity. This is the contents of the next Theorem.
\begin{theorem}\label{Lipschitz Continuity}
    Let $\Omega_\lambda^\star$ be an optimal configuration to
    Problem (\ref{Penalized Min Problem}) and $u_\lambda^\star$
    its $\A$ potential. Then,
    $$
        \|\nabla u_\lambda^\star \|_{L^\infty(\R^n \setminus D)}
        \le C\lambda^{1/p},
    $$
for a constant $C$ that depends only on $\A$, $\Gamma$, $\varphi$
and $D$.
\end{theorem}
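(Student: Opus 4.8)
The plan is to establish the Lipschitz bound by the standard comparison-with-harmonic-replacement technique, exploiting the minimality of $u_\lambda^\star$ in the penalized problem $(\mathfrak{P}_\lambda)$ together with the $C^\epsilon$ regularity of the medium, which unlocks interior $C^{1,\alpha}$ estimates and Hopf's lemma. First I would fix a free boundary point $X_0 \in \partial \Omega_\lambda^\star$ and a small ball $B_r(X_0)$ with $r < \gamma/2$ (so that $B_r(X_0) \subset D^C$, using Proposition~\ref{Min is positive around D}), and let $\mathfrak{h}$ be the $\A$-harmonic replacement of $u_\lambda^\star$ in $B_r(X_0)$. Replacing $u_\lambda^\star$ by $\mathfrak{h}$ inside $B_r(X_0)$ yields a competitor for $(\mathfrak{P}_\lambda)$; comparing the functionals $\J_\lambda$ and using the Divergence Theorem (the representation in (\ref{weak Div Thm})) together with $\int \mathfrak{b}\,\L\mathfrak{b}\,dX = 0$ as in the proof of Theorem~\ref{Exist for penalized problem}, I would derive, via property~(iii) and the Lipschitz continuity of $\varrho_\lambda$, an energy-gap inequality of the form
$$
    \int_{B_r(X_0)} \langle \A(X,Du_\lambda^\star),Du_\lambda^\star \rangle - \langle \A(X,D\mathfrak{h}),D\mathfrak{h}\rangle\, dX \le C\lambda\, \Leb\big(B_r(X_0) \cap \{u_\lambda^\star = 0\}\big) \le C\lambda\, r^n.
$$
Then Lemma~\ref{p harmonic functions with same bondary data} (the case $p\ge 2$ directly; for $1<p\le 2$ one combines with the global gradient bound of Lemma~\ref{control on gradient} — wait, that is $\lambda$-dependent, so instead I would use the local energy bound $\int_{B_r}|\nabla u_\lambda^\star|^p \le C\lambda r^n$, which follows by a further comparison) converts this into
$$
    \fint_{B_r(X_0)} |\nabla(u_\lambda^\star - \mathfrak{h})|^p\, dX \le C\lambda.
$$

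Next I would run the standard dichotomy argument for Lipschitz regularity at the free boundary. Since $\mathfrak{h}$ is $\A$-harmonic in $B_r(X_0)$, interior $C^{1,\alpha}$ estimates give $\|\nabla\mathfrak{h}\|_{L^\infty(B_{r/2})} \le C r^{-1}\fint_{B_r}\mathfrak{h} \le C r^{-1}\fint_{B_r} u_\lambda^\star$ — here I use $0 \le \mathfrak{h} \le \sup\varphi$ and $\mathfrak{h} = u_\lambda^\star$ on $\partial B_r$. Writing $S(r) := r^{-1}\fint_{\partial B_r(X_0)} u_\lambda^\star\, d\mathcal{H}^{n-1}$, a Poincaré/Sobolev estimate applied to $u_\lambda^\star - \mathfrak{h}$ together with the energy gap above yields a recursive inequality
$$
    S(r/2) \le C_0\, S(r) + C_1 \lambda^{1/p},
$$
for a dimensional constant $C_0$. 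The classical iteration lemma (as in \cite{AC}) then forces either $S(r) \le C\lambda^{1/p}$ for all small $r$, or $u_\lambda^\star$ has positive density somewhere, contradicting $X_0 \in \partial\Omega_\lambda^\star$. Feeding $S(r) \lesssim \lambda^{1/p}$ back into the $C^{1,\alpha}$ bound for $\mathfrak{h}$ and the $L^p$-closeness of $u_\lambda^\star$ to $\mathfrak{h}$ gives $\fint_{B_{r/2}} u_\lambda^\star \le C\lambda^{1/p} r$, i.e. linear growth away from the free boundary.

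Finally, the linear growth estimate upgrades to the gradient bound: for an interior point $Y \in \Omega_\lambda^\star$ with $d := \dist(Y,\partial\Omega_\lambda^\star)$, applying interior $C^{1,\alpha}$ estimates for the $\A$-harmonic function $u_\lambda^\star$ in $B_d(Y)$ gives $|\nabla u_\lambda^\star(Y)| \le C d^{-1} \sup_{B_d(Y)} u_\lambda^\star \le C d^{-1} \cdot C\lambda^{1/p} d = C\lambda^{1/p}$, using the linear growth from the nearest free boundary point; away from the free boundary (distance $\ge \gamma$) the bound is immediate from global energy estimates. Hence $\|\nabla u_\lambda^\star\|_{L^\infty(\R^n\setminus D)} \le C\lambda^{1/p}$, with $C$ depending only on $\A$, $\Gamma$, $\varphi$, $D$.

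The main obstacle I anticipate is tracking the precise $\lambda$-dependence through the $1<p\le 2$ branch of Lemma~\ref{p harmonic functions with same bondary data}, where the factor $\alpha(f) = \big(\int|\nabla f|^p\big)^{1-2/p}$ appears: one must localize the energy bound (getting $\int_{B_r}|\nabla u_\lambda^\star|^p \lesssim \lambda r^n$ rather than invoking the global, $\lambda$-dependent Lemma~\ref{control on gradient}) so that the exponents recombine to produce exactly $\lambda^{1/p}$ and not a worse power; the non-local boundary term $\int_{\partial D}$ must be handled carefully here since local perturbations near $X_0 \in \partial\Omega_\lambda^\star$ only affect the flux through the Divergence-Theorem identity, not pointwise. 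Everything else is a routine adaptation of the Alt--Caffarelli machinery to the degenerate operator $\L$, for which the needed ingredients ($C^{1,\alpha}$ estimates, Hopf, the energy inequality of Lemma~\ref{p harmonic functions with same bondary data}) are already in place.
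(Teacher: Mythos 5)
Your argument for $p\ge 2$ is essentially the paper's first proof: harmonic replacement, the energy-gap inequality $\int_{B_r}\langle\A Du_\lambda^\star,Du_\lambda^\star\rangle-\langle\A D\mathfrak{h},D\mathfrak{h}\rangle\,dX\le C\lambda\,\Leb(B_r\cap\{u_\lambda^\star=0\})$ obtained by comparison with the obstacle problem of Theorem~\ref{obstable problem}, conversion to $\int|\nabla(u_\lambda^\star-\mathfrak{h})|^p$ via Lemma~\ref{p harmonic functions with same bondary data}, and then the Alt--Caffarelli dichotomy to get linear growth $u_\lambda^\star\lesssim\lambda^{1/p}\dist(\cdot,\partial\Omega_\lambda^\star)$, upgraded to a gradient bound by interior $C^{1,\alpha}$ estimates. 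One imprecision there: the mechanism is not a contraction $S(r/2)\le C_0S(r)+C_1\lambda^{1/p}$ (with $C_0$ merely ``dimensional'' this recursion does not iterate to a bound); it is the genuine dichotomy ``if $S(r)\ge C\lambda^{1/p}$ then, by Harnack plus a Hopf barrier for $\mathfrak{h}$ and the radial integration over directions $\nu$, the energy gap forces $\Leb(B_{r}\cap\{u_\lambda^\star=0\})=0$,'' which contradicts $X_0\in\partial\Omega_\lambda^\star$. That is exactly the computation (\ref{Lip Cont Eq01})--(\ref{Lip Cont Eq09}) in the paper, so this branch is fine.

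The genuine gap is the case $1<p<2$, which you flag but do not resolve. The $1<p\le 2$ branch of Lemma~\ref{p harmonic functions with same bondary data} carries the factor $\alpha(f)=\big(\int|\nabla f|^p\big)^{1-2/p}$ with a \emph{negative} exponent, so to extract $\int_{B_r}|\nabla(u_\lambda^\star-\mathfrak{h})|^p\lesssim\lambda r^n$ from the energy gap you must first control $\int_{B_r}|\nabla u_\lambda^\star|^p$ from above by $C\lambda r^n$. Your claim that this ``follows by a further comparison'' is not correct: comparing $u_\lambda^\star$ with its harmonic replacement only bounds the energy \emph{difference}, never the energy itself (the boundary datum on $\partial B_r$ is $u_\lambda^\star$, whose energy is what you are trying to estimate), and a Caccioppoli inequality for $u_\lambda^\star$ gives $\int_{B_{r/2}}|\nabla u_\lambda^\star|^p\lesssim r^{-p}\sup_{B_r}(u_\lambda^\star)^p\, r^n$, which is $\lesssim\lambda r^n$ only if you already know the linear growth you are trying to prove. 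So the argument is circular as written; closing it would require a delicate bootstrap across scales that you have not supplied. This is precisely why the paper declares the Alt--Caffarelli method ``restricted to the case $p\ge 2$'' and gives a second, different proof valid for all $p>1$: a blow-up/compactness argument in which one assumes $u_\lambda^\star(X_k)/\dist(X_k,\partial\Omega_\lambda^\star)\to\infty$, rescales by $N_k=\sup_{B_{d_k}(Y_k)}u_\lambda^\star$ so that the rescaled energy gap $\lambda(\gamma d_k/N_k)^p$ tends to zero, and concludes in the limit that $\mathscr{U}\equiv\mathscr{H}$ is $\A$-harmonic, vanishes at the origin, hence vanishes identically by the strong maximum principle, contradicting the Harnack lower bound (\ref{Lip Cont P Eq15.5}). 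That route needs only the qualitative uniqueness of the Dirichlet minimizer (strict monotonicity (c)(iii)), not the quantitative rate of Lemma~\ref{p harmonic functions with same bondary data}, and so is insensitive to the sign of $1-2/p$. You should either adopt that compactness argument for $1<p<2$ or supply the missing local energy bound; as it stands your proof only covers $p\ge2$.
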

\begin{proof} We will provide two proofs of this important
theorem. The first one follows the glamorous approach suggested in
\cite{AC}. Unfortunately, for non-local problems like ours, the
efficiency of that method is restricted to the case $p\ge 2$ and a
new and more modern argument is required to establish Lipschitz
continuity for $\A$-potential associated to an optimal design with
when $1<p<2$. The second proof we will present works for all
$p>1$.

{\it $1^\textrm{st}$ Proof. The case $p \ge 2$.} We shall
initially obtain an competing estimate for Inequality
(\ref{uniform continuity Eq03}), with $u_\lambda^\delta$ replaced
by $u_\lambda^\star$. Enhancing the notation in the proof of
Theorem \ref{Exist for penalized problem}, $B = B_d(X_0)$ will be
a ball centered at a point in $\Omega_\lambda^\star$, $
    \dist(X_0, \partial D) >\!\!> d \ge \dist (X_0, \partial \Omega_\lambda^\star)
$ and $\mathfrak{h}$ the $\A$-harmonic function in $B$ that agrees
with $u_\lambda^\star$ on $\partial B$. For any direction $\nu \in
\mathbb{S}^{n-1}$, we define
$$
    r_\nu := \min \left \{ r \suchthat \frac{1}{4} \le r \le 1
    \textrm{ and } u_\lambda^\star(X_0 + d r \nu) = 0 \right \}
$$
if such a set is nonempty; otherwise, we put $r_\nu = 1$. For
almost every direction $\nu$ the map $r \mapsto
u_\lambda^\star(X_0 + dr \nu)$ is in $W^{1,p}[\frac{1}{4},1]$.
Thus, taking into account that $u_\lambda^\star(X_0 + d r_\nu \nu)
= 0$ whenever $r_\nu < 1$, we can compute,
\begin{equation}\label{Lip Cont Eq01}
    \begin{array}{lll}
        \mathfrak{h}(X_0 + d r_\nu \nu) & = & \displaystyle \int_{r_\nu}^1 \dfrac{d}{dr}
        (u_\lambda^\star - \mathfrak{h})(X_0 + d r \nu) dr \\
        & \le & d \cdot (1-r_\nu)^{1/p'} \times \left [ \displaystyle \int_{r_\nu}^1
        |\nabla ( \mathfrak{h} - u_\lambda^\star )(X_0 +  r\nu)|^p dr \right
        ]^{1/p},
    \end{array}
\end{equation}
where, as usual, $p'$ denotes the conjugate of $p$, i.e.,
$\frac{1}{p} + \frac{1}{p'} = 1$. Now, by the Harnack Inequality,
we know
\begin{equation}\label{Lip Cont Eq02}
    \inf\limits_{B_{\frac{7}{8}}} \mathfrak{h} \ge c_1
    \mathfrak{h}(X_0),
\end{equation}
for a constant $c_1>0$ that depends only on dimension and $\A$.
Here $B_{\frac{7}{8}}$ stands for $B_{\frac{7}{8}d}(X_0)$. Let us
consider the universal barrier, $\mathfrak{B}$, given by
\begin{equation}\label{Lip Cont Eq03}
    \left \{
        \begin{array}{rll}
            \div \left ( \A(X_0 + dX, D\mathfrak{B}(X) \right ) &=& 0 \textrm{ in } B_1(0) \setminus
            B_{\frac{7}{8}}(0) \\
            \mathfrak{B} &=& 0 \textrm{ on } \partial B_1(0) \\
            \mathfrak{B} &=& c_1 \textrm{ in }
            \overline{B_{\frac{7}{8}}(0)},
        \end{array}
    \right.
\end{equation}
where $c_1$ is the universal constant in (\ref{Lip Cont Eq02}). By
the Hopf's maximum principle, there exists a universal constant
$c_2 >0$, such that
\begin{equation}\label{Lip Cont Eq04}
    \mathfrak{B}(X) \ge c_2 \left (1 - |X| \right ).
\end{equation}
By the maximum principle and (\ref{Lip Cont Eq04}) we can write
\begin{equation}\label{Lip Cont Eq05}
    \mathfrak{h}(X_0 + dX) \ge \mathfrak{h}(X_0)\cdot
    \mathfrak{B}(X) \ge c_2 \mathfrak{h}(X_0)\cdot  (1 - |X|).
\end{equation}
Combining (\ref{Lip Cont Eq01}) and (\ref{Lip Cont Eq05}) we end
up with
\begin{equation}\label{Lip Cont Eq06}
    d^p \cdot
    \left [ \displaystyle \int_{r_\nu}^1|\nabla ( \mathfrak{h} - u_\lambda^\star )
    (X_0 +  r\nu)|^p dr \right ] \ge c_3 \mathfrak{h}^p(X_0) \cdot  (1 -
    r_\nu).
\end{equation}
Integrating (\ref{Lip Cont Eq06}) with respect to $\nu$ over
$\mathbb{S}^{n-1}$, taking into account the definition of $r_\nu$,
we find
\begin{equation}\label{Lip Cont Eq07}
    \left ( \dfrac{\mathfrak{h}(X_0)}{d} \right )^p
    \cdot \int_{B_d(X_0) \setminus B_{d/4}(X_0)} \chi_{\{ u_\lambda^\star =
    0\}} dX \le C_4 \int_{B_d(X_0)} \left |\nabla \left (\mathfrak{h} -  u_\lambda^\star
    \right )(X) \right |^p dX.
\end{equation}
If we replace, in all of our arguments so far, $B_{d/4}(X_0)$ by
$B_{d/4}(\overline{X})$, for any $\overline{X} \in \partial
B_{d/2}(X_0)$, we obtain
\begin{equation}\label{Lip Cont Eq08}
    \left ( \dfrac{\mathfrak{h}(X_0)}{d} \right )^p
    \cdot \int_{B_d(X_0) \setminus B_{d/4}(\overline{X})} \chi_{\{ u_\lambda^\star =
    0\}} dX \le \tilde{C}_4 \int_{B_d(X_0)} \left |\nabla \left (\mathfrak{h} -  u_\lambda^\star
    \right )(X) \right |^p dX, \quad \forall \overline{X} \in \partial B_{d/2}(X_0).
\end{equation}
Integrating (\ref{Lip Cont Eq08}) with respect to $\overline{X}$,
we prove the following important estimate:
\begin{equation}\label{Lip Cont Eq09}
    \left ( \dfrac{\mathfrak{h}(X_0)}{d} \right )^p
    \cdot \left | \left \{ X \in B_d(X_0) \suchthat u_\lambda^\star (X) = 0  \right \} \right |
    \le C_5 \int_{B_d(X_0)} \left |\nabla \left (\mathfrak{h} -  u_\lambda^\star
    \right )(X) \right |^p dX.
\end{equation}
Now we argue as follows: let $\rho := \dist(X_0, \partial \Omega)$
and for each $0 < \delta <\!\!< 1$, denote $\mathfrak{h}_\delta$
the $\A$-harmonic function in $B_{\rho + \delta}(X_0)$ that agrees
with $u_\lambda^\star$ on $\partial B_{\rho + \delta}(X_0)$.
Combining (\ref{uniform continuity Eq03}) and (\ref{Lip Cont
Eq09}) together with standard elliptic estimate, we deduce
\begin{equation}\label{Lip Cont Eq10}
    \begin{array}{lll}
        u_\lambda^\star(X_0) & = & \mathfrak{h}_\delta(X_0) + O(1)
        \\
        &\le & C\lambda^{1/p} (\rho + \delta) + O(1).
    \end{array}
\end{equation}
Letting $\delta  \searrow 0$ in (\ref{Lip Cont Eq10}) we finally
conclude
$$
    u_\lambda^\star(X_0) \le C \dist \left (X_0, \partial
    \Omega_\lambda^\star \right ),
$$
which clearly implies that $u_\lambda^\star$ is Lipschitz
continuous up to the free boundary $\partial \Omega_\lambda^\star$
and $\|\nabla u_\lambda^\star\|_\infty \lesssim \lambda^{1/p}$.
\\
{\it $2^\textrm{nd}$ Proof. The general case.} Let us assume, for
purpose of contradiction, that there exists a sequence of points
$X_k \in  \Omega_\lambda^\star$, with
$$
    X_k \to \partial \Omega_\lambda^\star, \quad \textrm{ and }
    \quad
    \dfrac{u_\lambda^\star(X_k)}{\dist (X_k,
    \Omega_\lambda^\star)}
    \nearrow + \infty.
$$
For convenience, we will call $N_k := u(X_k)$ and $d_k := \dist
(X_k, \Omega_\lambda^\star)$, thus our assumption is that
\begin{equation} \label{Lip Cont P Eq11}
    \dfrac{d_k}{N_k} = O(1).
\end{equation}
For each $k$, let $Y_k$ be a point on $\partial
\Omega_\lambda^\star$ that satisfies
$$
    |Y_k - X_k| = d_k.
$$
By replacing $X_k$ by another point $\tilde{X}_k$, if necessary,
because of the weak maximum principle we can assume that
\begin{equation} \label{Lip Cont P Eq12}
    N_k = \sup\limits_{B_{d_k}(Y_k)} u.
\end{equation}
On the other hand, by the Harnack inequality, there exists a
universal constant $\kappa > 0$, for which,
$\inf\limits_{B_{\frac{2}{3}d_k}} u \ge \kappa N_k$. Thus
\begin{equation} \label{Lip Cont P Eq13}
    \sup\limits_{B_{\frac{1}{3}d_k}(Y_k)} u \ge \kappa N_k.
\end{equation}
Now for each $\frac{1}{3} \le \gamma < 1$, let
$\mathfrak{h}_\gamma$ be the $\L$-harmonic function in $B_{\gamma
d_k}(Y_k)$ taking boundary data equals $u_\lambda^\star$. By
comparing, in terms of the optimal design problem (\ref{Penalized
Min Problem}), $u_\lambda^\star$  and the solution to the Obstacle
problem in Theorem \ref{obstable problem} with $\mathcal{M} =
\{u_\lambda^\star = 0 \} \setminus B_{\gamma d_k}(Y_k)$, we
deduce, as in the proof of Theorem \ref{Exist for penalized
problem}, that
\begin{equation} \label{Lip Cont P Eq14}
    \left ( \int_{B_{\gamma d_k}(Y_k)} \langle \A (X,
    Du_\lambda^\star), D u_\lambda^\star \rangle - \langle \A (X,
    D\mathfrak{h}_\gamma), D \mathfrak{h}_\gamma \rangle dX \right ) \le
    \lambda \left (\gamma d_k \right )^n.
\end{equation}
For each $k\ge 1$, we consider the functions
$\mathscr{U}^\gamma_k, ~ \mathscr{H}^\gamma_k \colon B_1 \to
(0,1)$ given by
\begin{equation} \label{Lip Cont P Eq15}
    \mathscr{U}^\gamma_k(Z) := \dfrac{1}{N_k} u_\lambda^\star(Y_k +
    \gamma d_k Z) \quad \textrm{and} \quad \mathscr{H}^\gamma_k(Z) := \dfrac{1}{N_k} \mathfrak{h}(Y_k +
    \gamma d_k Z).
\end{equation}
From (\ref{Lip Cont P Eq13}), we know that
\begin{equation} \label{Lip Cont P Eq15.5}
    \sup\limits_{B_{\frac{1}{3}\gamma}} \mathscr{U}^\gamma_k \ge
    \kappa.
\end{equation}
We also know that $\mathscr{H}^\gamma_k$ is the unique minimizer
of
\begin{equation} \label{Lip Cont P Eq15.7}
    \mathscr{D}(v) := \int_{B_1} \langle \A (Y_k + \gamma d_kX,
    Dv), D v \rangle dX,
\end{equation}
among functions $v \in W^{1,p}_0(B_1) + \mathscr{H}_k^\gamma$ and
it satisfies
\begin{equation} \label{Lip Cont P Eq15.8}
    \left \{
        \begin{array}{rlll}
            \div \left ( \A \left (Y_k + \gamma d_k,
            D\mathscr{H}^\gamma_k \right ) \right ) &=& 0 &\textrm{ in }
            B_1 \\
            \mathscr{H}^\gamma_k & = & \mathscr{U}^\gamma_k &\textrm{ on }
            \partial B_1.
        \end{array}
    \right.
\end{equation}
 A direct computation reveals that
\begin{equation} \label{Lip Cont P Eq16}
    \nabla \mathscr{U}^\gamma_k(Z) = \dfrac{\gamma d_k}{N_k}
    \nabla u_\lambda^\star(Y_k + \gamma d_k Z) \quad \textrm{and similarly} \quad \nabla \mathscr{H}^\gamma_k(Z) =
    \dfrac{\gamma d_k}{N_k}\mathfrak{h}(Y_k + \gamma d_k Z).
\end{equation}
Combining (\ref{Lip Cont P Eq16}) and the Change of Variables
Theorem we obtain that
\begin{equation} \label{Lip Cont P Eq17}
    \int_{B_{\gamma d_k}(Y_k)} \langle \A (X,
    Du_\lambda^\star), D u_\lambda^\star \rangle dX = \left \{ \dfrac{\gamma d_k}{N_k} \right \}^{-p} \cdot \left (\gamma d_k \right
    )^n \int_{B_1} \langle \A (Y_k + \gamma d_k,
    D\mathscr{U}^\gamma_k), D \mathscr{U}^\gamma_k \rangle dX,
\end{equation}
and the same holds when we replace $u_\lambda^\star$ by
$\mathfrak{h}_\gamma$ and $\mathscr{U}^\gamma_k$ by
$\mathscr{H}^\gamma_k$. In particular, taking into account
 (\ref{Lip Cont P Eq11}), (\ref{Lip Cont P Eq14}), we find that
\begin{equation} \label{Lip Cont P Eq18}
     \left ( \int_{B_1} \langle \A (Y_k + \gamma d_k,
    D\mathscr{U}^\gamma_k), D \mathscr{U}^\gamma_k \rangle -  \langle \A (Y_k + \gamma d_k,
    D\mathscr{H}^\gamma_k), D \mathscr{H}^\gamma_k \rangle dX \right ) =
    O(1)
\end{equation}
as $k \to \infty$. Furthermore,  reasoning as in the proof of
Theorem \ref{Exist for penalized problem}, we show that the family
of functions $\left \{ \mathscr{U}^\gamma_k \right \}_{k\ge 1}$ is
uniformly continuous in $B_1$. Now we argue as follows, fix a
$\gamma^* < 1$. From the uniform continuity, up to a subsequence,
\begin{equation} \label{Lip Cont P Eq19}
    \mathscr{U}^{\gamma^{*}}_k \to \mathscr{U} \quad \textrm{ and } \quad \mathscr{H}^{\gamma^{*}}_k \to
    \mathscr{H},
\end{equation}
uniformly in $\overline{B}_1$. Also, $Y_k \to Y_0$. From (\ref{Lip
Cont P Eq15.8}) and (\ref{Lip Cont P Eq15.7}), we obtain that
\begin{equation} \label{Lip Cont P Eq20}
    \left \{
        \begin{array}{rlll}
            \div \left ( \A \left (Y_0,
            D\mathscr{H} \right ) \right ) &=& 0 &\textrm{ in }
            B_1 \\
            \mathscr{H} & = & \mathscr{U} &\textrm{ on }
            \partial B_1
        \end{array}
    \right.
\end{equation}
and that $\mathscr{H}$ is the unique minimizer of
\begin{equation} \label{Lip Cont P Eq21}
    \mathscr{D}_0(v) := \int_{B_1} \langle \A (Y_0,
    Dv), D v \rangle dX,
\end{equation}
among functions $v \in W^{1,p}_0(B_1) + \mathscr{H}$. However,
from (\ref{Lip Cont P Eq18}), we obtain that
\begin{equation} \label{Lip Cont P Eq22}
    \mathscr{D}_0 (\mathscr{U}) =\mathscr{D}_0 ( \mathscr{H}).
\end{equation}
Therefore, $\mathscr{U} \equiv \mathscr{H}$. In particular,
$\mathscr{U}$ solves the elliptic PDE
$$
    \div \left ( \A \left (Y_0, D\mathscr{U} \right ) \right ) = 0 \textrm{ in }
    B_1.
$$
However, since $\mathscr{U}(0) = 0$, by the strong maximum
principle, $\mathscr{U} \equiv 0$, which ultimately contradicts
(\ref{Lip Cont P Eq15.5}) and the Theorem is finally proven.
\end{proof}
Our next step is to prove that $u_\lambda^\star$ growths linearly
away from $\partial \Omega_\lambda^\star$. Notice that this is the
most admissible growth rate allowed by the Lipschitz regularity
previously proven in Theorem \ref{Lipschitz Continuity}. Here is
the precise statement:
\begin{theorem}\label{nondegeneracy} There exists a constant $\underline{c} > 0$,
depending on dimension $\A$, $D$, $\Gamma$ and $\varphi$, such
that
$$
    \lambda^{-1/p} \underline{c} \cdot \dist \left (X_0, \partial
    \Omega_\lambda^\star \right ) \le u_\lambda^\star(X_0),
$$
for any $X_0 \in \Omega_\lambda^\star$.
\end{theorem}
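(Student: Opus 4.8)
The plan is to establish the nondegeneracy estimate by a contradiction/blow-up argument, dual to the second proof of Theorem \ref{Lipschitz Continuity}, but now exploiting the \emph{upper} bound on the energy defect together with the coercivity built into the penalty term. Concretely, suppose the conclusion fails; then there is a sequence $X_k \in \Omega_\lambda^\star$ with $d_k := \dist(X_k, \partial\Omega_\lambda^\star) \to 0$ (or at least bounded) and
\[
    \frac{u_\lambda^\star(X_k)}{\lambda^{1/p} d_k} =: \varepsilon_k \searrow 0.
\]
Pick $Y_k \in \partial\Omega_\lambda^\star$ with $|X_k - Y_k| = d_k$ and rescale
\[
    \mathscr{U}_k(Z) := \frac{1}{\lambda^{1/p} d_k}\, u_\lambda^\star(Y_k + d_k Z), \qquad Z \in B_1,
\]
so that $\mathscr{U}_k(0) = 0$, $\mathscr{U}_k \ge 0$, and $\mathscr{U}_k(Z_k) = \varepsilon_k$ for the rescaled point $Z_k = (X_k - Y_k)/d_k \in \partial B_1$.

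**Key steps.** First I would record the competing inequality: comparing $u_\lambda^\star$ with the solution of the obstacle problem of Theorem \ref{obstable problem} with $\mathcal{M} = \{u_\lambda^\star = 0\}\setminus B_{d_k}(Y_k)$, exactly as in \eqref{Lip Cont P Eq14}, gives
\[
    \int_{B_{d_k}(Y_k)} \big( \langle \A(X, Du_\lambda^\star), Du_\lambda^\star\rangle - \langle \A(X, D\mathfrak{h}_k), D\mathfrak{h}_k\rangle \big)\, dX \le \lambda\, \omega_n d_k^{\,n},
\]
where $\mathfrak{h}_k$ is $\L$-harmonic in $B_{d_k}(Y_k)$ with boundary data $u_\lambda^\star$. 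Rescaling via the change of variables in \eqref{Lip Cont P Eq16}--\eqref{Lip Cont P Eq17} (noting the $\lambda^{1/p}$ normalization is chosen precisely so the right-hand side becomes $O(1)$), this says the rescaled $\mathscr{U}_k$ and its harmonic replacement $\mathscr{H}_k$ have energy defect bounded by a universal constant. Second, using Lemma \ref{p harmonic functions with same bondary data} I would convert this into an $L^p$-bound $\|\nabla(\mathscr{U}_k - \mathscr{H}_k)\|_{L^p(B_1)} \le C$ (with the $\alpha(\cdot)$-correction when $1<p\le 2$, which is harmless since the energies are a priori bounded), and via the Lipschitz bound of Theorem \ref{Lipschitz Continuity}, $\|\nabla \mathscr{U}_k\|_{L^\infty} \le C$; hence $\mathscr{U}_k \to \mathscr{U}_\infty$ and $\mathscr{H}_k \to \mathscr{H}_\infty$ uniformly on $\overline{B}_1$ along a subsequence, with $Y_k \to Y_0$ and the PDE passing to the limit $\div(\A(Y_0, D\mathscr{H}_\infty)) = 0$, $\mathscr{H}_\infty = \mathscr{U}_\infty$ on $\partial B_1$. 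Third, the measure-theoretic heart of the matter: I must show the limit $\mathscr{U}_\infty$ is \emph{not} identically zero, which then forces a contradiction. Since $\mathscr{U}_k(Z_k) = \varepsilon_k \to 0$ it is \emph{not} immediate that $\mathscr{U}_\infty \not\equiv 0$ — this is exactly where the Lipschitz estimate applied in the \emph{reverse} normalization is needed: from $|\nabla u_\lambda^\star| \le C\lambda^{1/p}$ we get, by integrating from $X_k$ to the nearest free boundary point $Y_k$, that automatically $u_\lambda^\star(X_k) \le C\lambda^{1/p} d_k$, so $\varepsilon_k \le C$; but to produce a genuine lower bound I would instead argue that if $\mathscr{U}_\infty \equiv 0$ then for large $k$, $u_\lambda^\star \le \eta \lambda^{1/p} d_k$ on, say, $B_{d_k/2}(Y_k)$ for arbitrarily small $\eta$, and then compare $u_\lambda^\star$ with the function obtained by filling the ball $B_{d_k/2}(Y_k)$ with the $\A$-harmonic function matching its (small) boundary data — this perturbation decreases the zero set by a definite fraction of $|B_{d_k/2}|$ while changing the boundary flux term by at most $C\eta^{p}\lambda d_k^n$ (by Lemma \ref{p harmonic functions with same bondary data} run backwards, the energy gain is $O(\eta^p \lambda d_k^n)$, which transfers to the flux term through the representation \eqref{meaure mu is normal bdry int} and properties 1--2 of $\Gamma$, as in \eqref{Prop Min is positive around D 2nd ineq}--\eqref{Prop Min is positive around D 4nd ineq}). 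Since the penalty saved is $\ge c\lambda d_k^n$ and the flux cost is $o(\lambda d_k^n)$ once $\eta$ is small, this contradicts the minimality of $\Omega_\lambda^\star$ in problem \eqref{Penalized Min Problem}.

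**Main obstacle.** The delicate point is the last step — extracting a quantitative nondegeneracy from the minimality when the geometry is rescaled and the operator $\A$ only has $C^\epsilon$ regularity. One must be careful that the "fill-in" competitor is admissible (it still contains $D$, and by Proposition \ref{Min is positive around D} the ball $B_{d_k/2}(Y_k)$ sits well inside $D^C$ once $d_k < \gamma$, which we may assume since otherwise $d_k$ is bounded below and the estimate is trivial from the already-known lower bound $D_\gamma \subset \{u_\lambda^\star > 0\}$), and that the non-local flux term genuinely only increases by the claimed order — here the $C^\epsilon$-continuity of $\A$ enters through the $C^{1,\alpha}$ estimates that make the harmonic replacement well-behaved and through Hopf's principle, which guarantees the comparison direction. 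The alternative, cleaner route — a direct blow-up showing $\mathscr{U}_\infty$ solves a constant-coefficient degenerate elliptic equation with $\mathscr{U}_\infty(0)=0$ and hence, \emph{if} one knows $\mathscr{U}_\infty$ has a positive zero-set of definite density, deriving a contradiction — requires establishing the positive-density property of $\{\mathscr{U}_\infty = 0\}$, which again reduces to the same minimality-based perturbation. I expect roughly half the work to be in setting up this perturbation carefully and tracking how the energy defect controls the flux defect uniformly in $\lambda$.
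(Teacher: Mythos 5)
Your proposal replaces the paper's direct comparison argument with a blow-up/contradiction scheme, but the step that carries all the weight — the minimality-based perturbation — is oriented the wrong way, so the argument does not close. You propose to fill $B_{d_k/2}(Y_k)$ with the $\A$-harmonic replacement of $u_\lambda^\star$; this \emph{enlarges} the positivity set, and since $\varrho_\lambda$ is nondecreasing the competitor's penalty is $\ge$ that of $u_\lambda^\star$, while its flux is $\le$ that of $u_\lambda^\star$ (harmonic replacement decreases the $\A$-Dirichlet energy, hence the flux through the representation of Lemma \ref{Lf is a Radon Measure}). Minimality then yields only ``flux decrease $\le$ penalty increase,'' which is precisely the continuity-type estimate (\ref{uniform continuity Eq03}) and contains no nondegeneracy information; there is no ``penalty saved,'' and both signs in your final comparison are reversed. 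Nondegeneracy requires the opposite perturbation: a competitor whose zero set is \emph{larger} by $c_n d^n$, obtained by killing $u_\lambda^\star$ on a ball contained in $\{u_\lambda^\star>0\}$, so that the penalty drops by a definite multiple of $d^n$ and minimality forces the flux cost to exceed that gain. The paper does exactly this: it takes the obstacle-problem solution of Theorem \ref{obstable problem} constrained to vanish on $\{u_\lambda^\star=0\}\cup B_{d/2}(X_0)$, bounds its energy from above by the explicit truncation $\mathfrak{g}=\min\{u_\lambda^\star,\,c_2 u_\lambda^\star(X_0)h\}$ with $h$ the $\A$-harmonic barrier of the annulus $B_{2d/3}(X_0)\setminus B_{d/2}(X_0)$ (Harnack guarantees $\mathfrak{g}=u_\lambda^\star$ outside $B_{2d/3}(X_0)$, and the $C^{1,\alpha}$ estimate gives $|Dh|\lesssim 1/d$), so the flux cost is at most $C\big(u_\lambda^\star(X_0)/d\big)^p d^n$, and the estimate follows.

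Two further points. First, the ball must be centered at the interior point $X_0$, where $u_\lambda^\star$ is $\A$-harmonic and Harnack propagates the value $u_\lambda^\star(X_0)$ over $B_{2d/3}(X_0)$; centering at the free boundary point $Y_k$, as you do, puts you in the setting of Theorem \ref{strong nondegeneracy}, which requires replacing Harnack by a sup-based barrier and is derived \emph{after} the present theorem. Second, your normalization $\mathscr{U}_k=u_\lambda^\star(Y_k+d_kZ)/(\lambda^{1/p}d_k)$ and the claim ``penalty saved $\ge c\lambda d_k^n$'' are calibrated to prove $u_\lambda^\star(X_0)\ge c\lambda^{1/p}d$, whereas the statement (and the paper's inequality (\ref{nondeg Eq01}), in which the volume gain enters with coefficient $\lambda^{-1}$) asserts $u_\lambda^\star(X_0)\ge\underline{c}\lambda^{-1/p}d$; as written, the scaling of your blow-up does not match the theorem. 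The first half of your argument (the $O(1)$ energy defect between $\mathscr{U}_k$ and $\mathscr{H}_k$) is correct but, as you yourself observe, inconclusive on its own, so the entire burden falls on the perturbation step, which fails as stated.
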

\begin{proof} Let us fix $X_0 \in \Omega_\lambda^\star$ near the free boundary
and label $d := \dist (X_0, \partial
\Omega_\lambda^\star)$. From Theorem \ref{obstable problem}, there
exists a unique solution, $\phi$, to the following obstacle
problem
\begin{equation}\label{obs problem - nondegeneray}
    \textrm{Min } \left \{ \int_{D^C} \langle \A(X,Df),Df \rangle dX
    \suchthat f \in W^{1,p}(D^C)~ f = \varphi \textrm{ on } \partial D
    \textrm{ and } f \le 0 \textrm{ in } \{ u_\lambda^\star = 0 \} \cup
    B_{\frac{d}{2}}(X_0) \right \}.
\end{equation}
Recall, in Theorem \ref{Exist for penalized problem}, we proved
that $u_\lambda^\star$ is too a minimizer for problem
$(\mathfrak{P}_\lambda^\mathrm{weak})$, that is problem (\ref{Weak
Min Problem}) and clearly $\phi$ competes with $u_\lambda^\star$
in such a problem; therefore
\begin{equation} \label{nondeg Eq01}
    \int_{\partial D} \left ( \Gamma(X, \partial_\A \phi) -
    \Gamma(X, \partial_\A u_\lambda^\star) \right ) d\H \ge \lambda^{-1} c_n
    d^n,
\end{equation}
for a dimensional constant $c_n$. Since both $u_\lambda^\star$ and
$\phi$ are $\A$-harmonic in $D_\gamma$, where $\gamma$ is the
number in Proposition \ref{Min is positive around D}, for a
constant $C_1 = C_1(\Gamma)$, we can estimate
\begin{equation} \label{nondeg Eq02}
    \begin{array}{lll}
        \displaystyle \int_{\partial D} \left ( \Gamma(X, \partial_\A \phi) -
        \Gamma(X, \partial_\A u_\lambda^\star) \right ) d\mathcal{H}^{n-1} &\le& C_1
        \displaystyle  \int_{\partial D} \left ( \partial_\A \phi -
        \partial_\A u_\lambda^\star \right ) d\mathcal{H}^{n-1}\\
        &\le & \frac{C_1}{\inf\limits_{\partial D} \varphi}
        \displaystyle  \int \left ( \langle \A (X, D\phi), D\phi \rangle -
        \langle \A (X, Du_\lambda^\star), Du_\lambda^\star \rangle
        \right )
        dX.
    \end{array}
\end{equation}
Here we have used the measure representation provided by Lemma
\ref{Lf is a Radon Measure}. Now let $h$ satisfy
$$
    \L h = 0 \textrm{ in } B_{\frac{2}{3}d}(X_0) \setminus
    B_{\frac{d}{2}}(X_0), \quad h = 0 \textrm{ in }
    B_{\frac{d}{2}}(X_0), \quad \textrm{ and } \quad h = 1 \textrm{ on } \partial
    B_{\frac{2}{3}d}(X_0).
$$
By the Harnack inequality, there exists a constant $c_2 > 0$, such
that
\begin{equation}\label{employ H.I. for linear growth}
    u_\lambda^\star(X) \ge c_2 u_\lambda^\star(X_0) h(X) \textrm{ in } B_{\frac{2}{3}d}(X_0).
\end{equation}
Consider the auxiliary function
$$
    \mathfrak{g}(X) := \left \{
        \begin{array}{cll}
            \min \left \{ u_\lambda^\star(X), c_2u_\lambda^\star(X_0)h(X) \right
            \} &\textrm{ in }& B_{\frac{2}{3}d}(X_0) \\
            u_\lambda^\star(X) &\textrm{ in }&  D^C \setminus B_{\frac{2}{3}d}(X_0).
        \end{array}
    \right.
$$
Notice that $\mathfrak{g}$ competes with $\phi$ in the obstacle
problem, thus, combining (\ref{nondeg Eq01}), (\ref{nondeg Eq02})
and replacing $\phi$ by $\mathfrak{g}$, we obtain
\begin{equation} \label{nondeg Eq03}
    \lambda^{-1} c_3 \le \dfrac{1}{d^n} \displaystyle  \int_{\Pi}
    \left ( \langle \A (X, D\mathfrak{g}), D\mathfrak{g} \rangle -
    \langle \A (X, Du_\lambda^\star), Du_\lambda^\star \rangle
    \right ) dX.
\end{equation}
Where set set of integration in the above estimate can be taken to
be
$$
    \Pi := \left \{ X \in B_{\frac{2}{3}d}(X_0) \setminus
    B_{\frac{1}{2}d}(X_0) \suchthat c_2u_\lambda^\star(X_0)h(X)
    \le u_\lambda^\star(X) \right \}.
$$
However, in this set, we can estimate
\begin{equation} \label{nondeg Eq04}
    \begin{array}{lll}
        \langle \A (X, D\mathfrak{g}), D\mathfrak{g}(X) \rangle &\le & \Lambda
        |D \mathfrak{g}|^p \\
        & \le &  |Dh(X)|^p  \cdot \left [ c_2 u_\lambda^\star(X_0) \right ]^p
       \\
        & \le &  {C} \left  [ \dfrac{c_2 u_\lambda^\star(X_0)}{d} \right
        ]^p.
    \end{array}
\end{equation}
In the last inequality we have used the $C^{1,\alpha}$ estimate
for $h$. Finally, a combination of (\ref{nondeg Eq03}) and
(\ref{nondeg Eq04}) leads us to
$$
    \lambda^{-1/p}\underline{c} d \le u_\lambda^\star(X_0),
$$
for a constant  $\underline{c} =  \underline{c}(n,\A, D, \Gamma,
\varphi)$, and the Theorem is proven.
\end{proof}
Sometimes it is convenient to express nondegeneracy in any ball
centered at a point $X_0 \in \overline{\Omega_\lambda^\star}$.
This is the contents of the next Theorem.
\begin{theorem}\label{strong nondegeneracy}
Let $K$ be a compact set and $X_0 \in
\overline{\Omega_\lambda^\star} \cap K$. Then,
$$
    \sup\limits_{B_r(X)} u_\lambda^\star \ge c r,
$$
for some constant $c>0$ depending on dimension, $K$, $\A$, $D$,
$\Gamma$, $\varphi$ and $\lambda$.
\end{theorem}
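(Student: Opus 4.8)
The statement upgrades the "interior" nondegeneracy of Theorem \ref{nondegeneracy}, which asserts $u_\lambda^\star(X_0)\gtrsim \dist(X_0,\partial\Omega_\lambda^\star)$ for $X_0\in\Omega_\lambda^\star$, to a statement about the $\sup$ of $u_\lambda^\star$ over an arbitrary ball $B_r(X_0)$ centered at a point of the \emph{closure} $\overline{\Omega_\lambda^\star}$. The plan is to reduce to Theorem \ref{nondegeneracy} by a simple geometric dichotomy on how $B_r(X_0)$ meets $\Omega_\lambda^\star$, handling the easy case directly and pulling the hard case back to the interior estimate.

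**Step 1 (reduction to free boundary points).** First I would observe that it suffices to prove the estimate when $X_0\in\partial\Omega_\lambda^\star$; indeed if $X_0\in\Omega_\lambda^\star$ and $\dist(X_0,\partial\Omega_\lambda^\star)\ge r/2$, then $u_\lambda^\star(X_0)\ge c r$ directly by Theorem \ref{nondegeneracy}, so $\sup_{B_r(X_0)}u_\lambda^\star\ge cr$; and if $\dist(X_0,\partial\Omega_\lambda^\star)< r/2$, replacing $X_0$ by a nearby free boundary point $Z_0$ only shrinks the ball by a constant factor, since $B_{r/2}(Z_0)\subset B_r(X_0)$. By continuity of $u_\lambda^\star$ the same reduction covers $X_0\in\overline{\Omega_\lambda^\star}$.

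**Step 2 (dyadic propagation from the interior estimate).** So fix $X_0\in\partial\Omega_\lambda^\star$. I would pick a point $Y_0\in B_{r/2}(X_0)\cap\Omega_\lambda^\star$ with $\dist(Y_0,\partial\Omega_\lambda^\star)$ comparable to $r$ — this exists because $X_0$ is a limit of interior points and, by Proposition \ref{Min is positive around D} together with the nondegeneracy already proven, $\Omega_\lambda^\star$ cannot be "thin" near a free boundary point at scale $r$ (one can also simply take the supremum over the ball and use that the set $\{u_\lambda^\star>0\}$ occupies a definite portion of $B_r(X_0)$, by a standard density/Lebesgue point argument on $\chi_{\{u_\lambda^\star>0\}}$, or by iterating Theorem \ref{nondegeneracy} along a chain of balls of geometrically decreasing radii reaching from a far interior point down to $X_0$). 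Concretely, the cleanest route is the chain argument: start from any point where $u_\lambda^\star\ge c r$ at unit-comparable distance to the free boundary, and walk toward $X_0$ through balls $B_{r_j}$ with $r_j = 2^{-j}r$, applying Theorem \ref{nondegeneracy} at each scale to keep the value of $u_\lambda^\star$ at a controlled positive level; this produces some $Y_0\in B_r(X_0)$ with $u_\lambda^\star(Y_0)\ge c\,r$. Then $\sup_{B_r(X_0)}u_\lambda^\star\ge u_\lambda^\star(Y_0)\ge c r$, with $c=c(n,K,\A,D,\Gamma,\varphi,\lambda)$.

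**Main obstacle.** The delicate point is guaranteeing the existence of the interior point $Y_0\in B_r(X_0)$ at which $u_\lambda^\star$ is already of size $\gtrsim r$: a priori $\Omega_\lambda^\star$ could, near $X_0$, consist of a cusp or a very thin sliver inside $B_r(X_0)$, in which case every interior point of $\Omega_\lambda^\star\cap B_r(X_0)$ is close to $\partial\Omega_\lambda^\star$ and Theorem \ref{nondegeneracy} alone gives nothing useful. Ruling this out is exactly a \emph{uniform positive density} statement, $|\{u_\lambda^\star>0\}\cap B_r(X_0)|\ge \theta\, r^n$, which in turn follows from comparing $u_\lambda^\star$ with the obstacle-problem solution of Theorem \ref{obstable problem} with obstacle supported on $\{u_\lambda^\star=0\}\cup B_{r}(X_0)$, precisely as in the proof of Theorem \ref{nondegeneracy} but now forcing a \emph{large} ball into the zero set; minimality of $u_\lambda^\star$ for $(\mathfrak{P}_\lambda^{\mathrm{weak}})$ then bounds the penalty gain $\lambda\,\theta r^n$ from above by the energy that the competitor saves on $\partial D$, which is itself controlled by $\|\nabla u_\lambda^\star\|_\infty\lesssim\lambda^{1/p}$ from Theorem \ref{Lipschitz Continuity}. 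This is where the $\lambda$-dependence of the constant $c$ enters (it did not appear in Theorem \ref{nondegeneracy}), and it is the only genuinely nontrivial ingredient; once the density bound is in hand, Steps 1--2 are routine.
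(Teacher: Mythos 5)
Your reduction does not close, and the gap is precisely at the point you yourself flag as the ``main obstacle.'' The paper's proof does not look for an interior point $Y_0$ at distance $\gtrsim r$ from the free boundary at all: it reruns the obstacle/truncation comparison of Theorem \ref{nondegeneracy} verbatim with $B_{r/2}(X_0)$ forced into the zero set, replacing the Harnack inequality step (\ref{employ H.I. for linear growth}) --- which is unavailable because $u_\lambda^\star$ is not $\A$-harmonic across $X_0$ --- by the trivial barrier $\upsilon := \sup_{B_r} u_\lambda^\star \cdot h \ge u_\lambda^\star$ on $\partial B_r$, and taking $\mathfrak{g} = \min\{u_\lambda^\star, \upsilon\}$ as competitor. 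The energy cost of $\mathfrak{g}$ is then $\lesssim (\sup_{B_r} u_\lambda^\star / r)^p\, r^n$ and the lower bound $\sup_{B_r} u_\lambda^\star \ge c\,r$ drops out of the same chain of inequalities (\ref{nondeg Eq01})--(\ref{nondeg Eq04}) in one step, with no density input.

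Your two-step route fails twice. First, even granting the density bound $\Leb(\{u_\lambda^\star > 0\} \cap B_r(X_0)) \ge \theta r^n$, it does not follow that some $Y_0 \in B_r(X_0)$ satisfies $\dist(Y_0, \partial \Omega_\lambda^\star) \ge c\,r$: the positivity set could a priori consist of many slivers of width $\ll r$ whose total measure is still $\theta r^n$. Excluding that requires the upper bound $\mathcal{H}^{n-1}(\partial \Omega_\lambda^\star \cap B_r) \le \overline{C} r^{n-1}$ of Theorem \ref{weak geometry}, which the paper proves \emph{after} and \emph{from} strong nondegeneracy. Second, your proposed proof of the density bound is circular and runs the inequality the wrong way. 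In the paper, the lower density estimate in (\ref{uniform positive density}) is explicitly ``an immediate consequence of Lipschitz regularity and strong nondegeneracy'' --- it is downstream of the present theorem. And the obstacle comparison you describe yields ``penalty saving $\le$ flux cost,'' where the penalty saving is itself proportional to the unknown quantity $\Leb(\{u_\lambda^\star > 0\} \cap B_{r/2}(X_0))$ and the flux cost is bounded \emph{above} by the truncation energy; minimality can therefore only bound that volume from above in terms of $\sup_{B_r} u_\lambda^\star$, never from below. Read correctly, that comparison is exactly the paper's proof of the present theorem (small $\sup_{B_r} u_\lambda^\star$ forces the positivity set in $B_{r/2}$ to be null, contradicting $X_0 \in \overline{\Omega_\lambda^\star}$), not a proof of positive density.
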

\begin{proof}
The proof is basically the same of as the proof of Theorem
\ref{nondegeneracy}. The only difference is that $u_\lambda^\star$
is no longer $\A$-harmonic near a free boundary point $X_0$, thus
we replace the employment of Harnack inequality in (\ref{employ
H.I. for linear growth}) by:
$$
    \upsilon(X) := \sup\limits_{B_r} u_\lambda^\star \cdot h(X) \ge
    u_\lambda^\star \textrm{ on } \partial B_r,
$$
where $h$ is the $\A$-harmonic function in $B_r\setminus B_{r/2}$
taking boundary data 1 on $\partial B_r$ and $0$ in $B_{r/2}$. We
then define the auxiliary function $\mathfrak{g}(X) : = \min \left
\{u_\lambda^\star(X), \upsilon(X) \right \}$. The proof now
follows the same path as in  the proof of Theorem
\ref{nondegeneracy}.
\end{proof}
As usual, optimal regularity, Theorem \ref{Lipschitz Continuity}
and nondegeneracy, Theorem \ref{nondegeneracy} or Theorem
\ref{strong nondegeneracy}, as you like, allow a deeper
understanding on the geometric-measure properties of the free
boundary. In the next Theorem we will show that the free boundary
$\Omega_\lambda^\star$ has the appropriate weak geometry.
\begin{theorem} \label{weak geometry} There exists a constant $
0<\varsigma<1$, depending on dimension, $\A$, $D$, $\Gamma$,
$\varphi$, and $\lambda^{1/p}$, such that,
\begin{equation}\label{uniform positive density}
   \varsigma \omega_n r^n  \le \Leb \left ( B_r(Z) \cap
    \Omega_\lambda^\star \right ) \le (1 - \varsigma) \omega_n
    r^n,
\end{equation}
for any ball $B_r(Z)$ centered at a free boundary point $Z \in
\partial \Omega_\lambda^\star$. Furthermore,  the optimal
configuration $\Omega_\lambda^\star$ is a set of locally finite
perimeter and for positive constants $\underline{c}$,
$\overline{C}$, depending on $\A$, $D$, $\Gamma$, $\varphi$, and
$\lambda^{1/p}$, there holds
\begin{equation}\label{estimate on Hausdorff measure}
    \underline{c} r^{n-1} \le \mathcal{H}^{n-1} \left (\partial
    \Omega_\lambda^\star \cap B_r(Z) \right ) \le \overline{C}
    r^{n-1}
\end{equation}
for any ball $B_r(Z)$ centered at a free boundary point. In
particular, $\mathcal{H}^{n-1} \left ( \partial
\Omega_\lambda^\star \setminus \redbdry  \Omega_\lambda^\star
\right ) = 0$.
\end{theorem}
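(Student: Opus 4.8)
The plan is to run the classical Alt--Caffarelli scheme, now powered by the two pieces of fine information already established: the Lipschitz bound $\|\nabla u_\lambda^\star\|_{L^\infty(\R^n\setminus D)}\lesssim\lambda^{1/p}$ of Theorem \ref{Lipschitz Continuity} and the linear nondegeneracy of Theorems \ref{nondegeneracy} and \ref{strong nondegeneracy}. Write $u=u_\lambda^\star$, $\Omega=\Omega_\lambda^\star$, $L=\|\nabla u\|_{L^\infty}$, and note that by Proposition \ref{Min is positive around D} one has $\dist(Z,\partial D)>\gamma$ for every free boundary point $Z$, so all the balls considered below sit well inside $D^C$, where $\A$ is $C^\epsilon$ and the Hopf and $C^{1,\alpha}$ machinery unlocked by (\ref{continuity of the medium}) is at our disposal.

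\emph{Lower density of the positive phase.} Fix $Z\in\partial\Omega$ and $r$ small. By Theorem \ref{strong nondegeneracy} applied on $B_{r/2}(Z)$ there is $Y\in B_{r/2}(Z)$ with $u(Y)\ge c\,r$, $c=c(\lambda^{1/p})>0$; since $u$ is $L$-Lipschitz, $u>0$ on $B_\rho(Y)$ with $\rho=c\,r/L\gtrsim\lambda^{-1/p}r$. After shrinking $\rho$ so that $\rho\le r/2$ if necessary, $B_\rho(Y)\subset B_r(Z)\cap\Omega$, which is the left inequality in (\ref{uniform positive density}).

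\emph{Lower density of the zero phase.} This is the genuinely free-boundary assertion, and I expect it to be the main obstacle. Suppose it fails: there are free boundary points $Z_j$ and radii $r_j$ with $\Leb(B_{r_j}(Z_j)\cap\{u=0\})=\varsigma_j\,\omega_n r_j^{\,n}$ and $\varsigma_j\to0$. Rescale: put $\tilde u_j(X):=r_j^{-1}u(Z_j+r_jX)$ and $\A_j(X,\xi):=\A(Z_j+r_jX,\xi)$ on $B_1$. By Theorem \ref{Lipschitz Continuity} the $\tilde u_j$ are uniformly Lipschitz, so along a subsequence $\tilde u_j\to\tilde u_0$ uniformly on $\overline B_1$, $Z_j\to Z_0$, and $\A_j\to\A(Z_0,\cdot)$ locally uniformly by (\ref{continuity of the medium}); moreover $\tilde u_0\ge0$, $\tilde u_0(0)=0$, and $\sup_{B_1}\tilde u_0\ge c(\lambda^{1/p})>0$ since nondegeneracy (Theorem \ref{strong nondegeneracy}) survives the blow-up. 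The crucial point is that $\tilde u_0$ is $\A(Z_0,\cdot)$-harmonic in all of $B_1$: comparing $u$ with its $\A$-harmonic replacement in $B_{r_j}(Z_j)$, using the minimality of $u$ in problem (\ref{Weak Min Problem}), the measure representation of Lemma \ref{Lf is a Radon Measure}, the monotonicity and convexity of $\Gamma$, and Lemma \ref{p harmonic functions with same bondary data}, one obtains---exactly as in the second proof of Theorem \ref{Lipschitz Continuity}---that the energy defect of $\tilde u_j$ against its $\A_j$-harmonic replacement on $B_1$ is bounded by $C\varsigma_j\to0$; here the boundary flux term and the volume penalty are inert at rate $r_j^{\,n}$ under the blow-up, and the factor $\alpha(u)$ of Lemma \ref{p harmonic functions with same bondary data} appearing when $1<p<2$ is absorbed using the $\lambda$-uniform $W^{1,p}$ bounds of Proposition \ref{Energ Estimate} and Lemma \ref{control on gradient}. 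Passing to the limit, $\tilde u_0$ coincides with its own $\A(Z_0,\cdot)$-harmonic extension; being nonnegative and vanishing at the origin, it is $\equiv0$ by the strong maximum principle, contradicting $\sup_{B_1}\tilde u_0>0$. Hence $\varsigma$ is bounded below, which is the right inequality in (\ref{uniform positive density}). The real work sits precisely where it did for Theorem \ref{Lipschitz Continuity}: the singular range $1<p<2$, and the non-local character of $\J$, which forbids naive local filling and forces one to perturb through the obstacle problem of Theorem \ref{obstable problem}.

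\emph{Finite perimeter and the Hausdorff estimates.} Since $u$ is Lipschitz, the coarea formula gives $\int_0^1\Per(\{u>t\};K)\,dt\le L\,\Leb(K)<\infty$ for every compact $K\subset D^C$; choosing $t_k\searrow0$ along which $\Per(\{u>t_k\};K)$ stays bounded and using lower semicontinuity of perimeter yields $\Per(\Omega;K)<\infty$, so $\Omega$ has locally finite perimeter. The left inequality in (\ref{estimate on Hausdorff measure}) then follows from the relative isoperimetric inequality on $B_r(Z)$ together with \emph{both} inequalities of (\ref{uniform positive density}): $\underline c\,r^{n-1}\lesssim\min\{\Leb(B_r(Z)\cap\Omega),\Leb(B_r(Z)\setminus\Omega)\}^{(n-1)/n}\lesssim\Per(\Omega;B_r(Z))\le\mathcal{H}^{n-1}(\partial\Omega\cap B_r(Z))$. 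For the right inequality, recall from Lemma \ref{Lf is a Radon Measure} that $\mu_u:=\L u$ is a nonnegative Radon measure which vanishes in $\Omega$ and in $D^C\setminus\overline\Omega$, hence is carried by $\partial\Omega$, with $\mu_u(B_r(Z))=\int_{\partial B_r(Z)}\partial_\A u\,d\mathcal{H}^{n-1}\le C\,\lambda^{(p-1)/p}r^{n-1}$ by the Lipschitz bound, while nondegeneracy (comparison with $\A$-harmonic barriers in the annulus $B_r(Z)\setminus B_{r/2}(Z)$, as in the proof of Theorem \ref{nondegeneracy}) gives $\liminf_{r\to0}\mu_u(B_r(Z))/r^{n-1}\ge c\,\lambda^{-(p-1)/p}>0$ at every $Z\in\partial\Omega$; the standard density comparison between $\mu_u$ and $\mathcal{H}^{n-1}$ then yields $\mathcal{H}^{n-1}(\partial\Omega\cap B_r(Z))\le C\,\mu_u(B_r(Z))\le\overline C\,r^{n-1}$. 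Finally, (\ref{uniform positive density}) shows that every point of $\partial\Omega$ has $\Leb$-density of $\Omega$ lying in $[\varsigma,1-\varsigma]$, so $\partial\Omega$ coincides with the measure-theoretic boundary of $\Omega$; as $\Omega$ has locally finite perimeter, Federer's structure theorem gives $\mathcal{H}^{n-1}(\partial\Omega\setminus\redbdry\Omega)=0$, completing the proof.
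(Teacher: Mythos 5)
Your proposal is correct, and for the heart of the theorem---the two-sided density estimate (\ref{uniform positive density})---it runs exactly the paper's argument: the positive-phase density from Lipschitz continuity plus nondegeneracy, and the zero-phase density by contradiction via a blow-up $\mathfrak{q}_j(Y)=r_j^{-1}u(Z+r_jY)$, the harmonic replacement, the rescaled version of (\ref{uniform continuity Eq03})/(\ref{uniform continuity Eq04}) forcing the energy defect to vanish, and the strong maximum principle against Theorem \ref{strong nondegeneracy}. Where you genuinely diverge is in (\ref{estimate on Hausdorff measure}). The paper gets the lower bound by a second blow-up, this time at the level of the measures $\nu_j=\div(\A(Z+r_jX,D\mathfrak{q}_j))\,dX$: assuming $\mathcal{H}^{n-1}(\partial\Omega_\lambda^\star\cap B_{r_j})/r_j^{n-1}\to 0$ forces $\nu_j\rightharpoonup 0$, while Lipschitz regularity, nondegeneracy and the already-proven density estimates force $\nu_j\rightharpoonup\div(\A(Z,D\mathfrak{q}_0))\,dX$, so $\mathfrak{q}_0$ is $\A$-harmonic and again contradicts nondegeneracy. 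You instead deduce the lower bound directly from the relative isoperimetric inequality applied to $B_r(Z)$ together with both halves of (\ref{uniform positive density}); this is shorter, avoids the delicate identification of the weak-$*$ limit of $\nu_j$, and only costs you the (easy) prior observation that $\Omega_\lambda^\star$ has locally finite perimeter, which you supply via the coarea formula. Conversely, for the upper bound the paper simply invokes Lipschitz regularity, whereas you supply the missing mechanism: the flux representation of Lemma \ref{Lf is a Radon Measure} bounds $\mu_u(B_r(Z))\le C\lambda^{(p-1)/p}r^{n-1}$ from above, a barrier argument as in Theorem \ref{nondegeneracy} bounds the $(n-1)$-density of $\mu_u$ from below at every free boundary point, and the standard density-comparison theorem converts this into $\mathcal{H}^{n-1}(\partial\Omega_\lambda^\star\cap B_r(Z))\lesssim\mu_u(B_{2r}(Z))\lesssim r^{n-1}$; this is the classical Alt--Caffarelli route and is in fact more complete than the paper's one-line justification. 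The concluding step---$\mathcal{H}^{n-1}(\partial\Omega_\lambda^\star\setminus\redbdry\Omega_\lambda^\star)=0$ from the density estimates, locally finite perimeter, and Federer's theorem---is the same in both. The only place where you should add a line or two of detail is the lower density bound for $\mu_u$ at free boundary points, which you assert by analogy with Theorem \ref{nondegeneracy} but do not carry out; it is standard, but it is the one ingredient of your upper Hausdorff bound that is not already proved elsewhere in the paper.
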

\begin{proof}
The estimate by below in (\ref{uniform positive density}), that
is, $\varsigma \omega_n r^n  \le \Leb \left ( B_r(Z) \cap
\Omega_\lambda^\star \right )$, is an immediate consequence of
Lipschitz regularity and strong nondegeneracy.
\par
Let us focus our effort to prove the uniform density of the zero
phase, $\R^n \setminus \Omega_\lambda^\star$. Let us assume, for
purpose of contradiction, the existence of a sequence of positive
real numbers $r_j$ with $r_j \searrow 0$ as $j \to \infty$ and
\begin{equation}\label{weak geo Eq01}
    \dfrac{\Leb \left (B_{r_j}(Z) \cap \{ u_\lambda^\star = 0 \} \right
    ) }{{r_j}^n} = O(1).
\end{equation}
We consider then the blow-up sequence $\mathfrak{q}_j  \colon B_1
\to \R$, defined as
\begin{equation}\label{weak geo Eq02}
    \mathfrak{q}_j (Y) := \dfrac{1}{r_j} u_\lambda^\star (Z +r_j
    Y).
\end{equation}
Let $\mathfrak{h}_j$ be the solution to
\begin{equation}\label{weak geo Eq03}
    \left \{
        \begin{array}{cll}
            \div \left ( \A(Z + r_jX, D\mathfrak{h}_j) \right ) & = & 0 \textrm{
            in } B_1 \\
            \mathfrak{h}_j & = & \mathfrak{q}_j  \textrm{
            on } \partial B_1.
        \end{array}
    \right.
\end{equation}
A renormalization of (\ref{uniform continuity Eq03}), when $p \ge
2$ or (\ref{uniform continuity Eq04}) when $1< p \le 2$, under the
assumption (\ref{weak geo Eq01}), reveals
\begin{equation}\label{weak geo Eq04}
    \int_{B_1} \left | \nabla \left ( \mathfrak{h}_j - \mathfrak{q}_j
    \right ) (Y) \right |^p dY = O(1).
\end{equation}
By Lipschitz regularity of $u_\lambda^\star$, and $C^{1,\alpha}$
elliptic estimate, up to a subsequence, we may assume
\begin{equation}\label{weak geo Eq05}
    \mathfrak{q}_j \stackrel{j\to \infty}{\longrightarrow}
    \mathfrak{q}_0 \quad \textrm{ and } \quad
    \mathfrak{h}_j \stackrel{j\to \infty}{\longrightarrow}
    \mathfrak{h}_0.
\end{equation}
uniformly in $B_{9/11}$. From (\ref{bound grad Eq03})
$\mathfrak{h}_0$ satisfies $\div (\A(Z, D\mathfrak{h}_0(Y)) =  0,$
and from (\ref{weak geo Eq04}) so does $\mathfrak{q}_0$, that is,
\begin{equation}\label{weak geo Eq06}
    \div \left ( \A(Z, D\mathfrak{q}_0) (Y)\right ) =  0 \textrm{ in } B_{1/2}.
\end{equation}
Since $\mathfrak{q}(0) = 0$, by the strong maximum principle, we
conclude $\mathfrak{q}(0) \equiv 0$ in $B_{1/2}$. However, this is
a contraction on the nondegeneracy property guaranteed by Theorem
\ref{strong nondegeneracy}.
\par
We now turn our attention to (\ref{estimate on Hausdorff
measure}). The estimate by above, that is $\mathcal{H}^{n-1} \left
(\partial \Omega_\lambda^\star \cap B_r(Z) \right ) \le
\overline{C} r^{n-1}$ is a consequence of Lipschitz regularity of
$u_\lambda^\star$. In order to prove the estimate by below in
(\ref{estimate on Hausdorff measure}), as before, let us assume,
for the sake of contradiction, that there exists a sequence $r_j
\searrow 0$ such that
\begin{equation}\label{weak geo Eq07}
    \dfrac{\mathcal{H}^{n-1} \left (\partial \Omega_\lambda^\star \cap
    B_{r_j}(Z)\right ) }{{r_j}^{n-1}} = O(1).
\end{equation}
With the notation as in (\ref{weak geo Eq02}), let us define the
sequence of nonnegative measures $\nu_j$, in $B_{2/3}$, as
\begin{equation}\label{weak geo Eq08}
    \nu_j:= \div \left ( \A(Z + r_jX, D\mathfrak{q}_j) \right ) dX.
\end{equation}
Via a compactness argument, we may assume, modulo passing to a
subsequence if necessary, that  $\nu_j \rightharpoonup \nu_0$ in
the sense of measures. However, condition (\ref{weak geo Eq07})
translates in terms of the measures $\nu_j$ as
\begin{equation}\label{weak geo Eq8.5}
    \nu_j \rightharpoonup 0.
\end{equation}
Moreover, by Lipschitz regularity, nondegeneracy and uniform
positive density of both phases, estimate (\ref{uniform positive
density}), it is not hard to verify that
\begin{equation}\label{weak geo Eq09}
    \nu_j \rightharpoonup \nu_0 := \div \left ( \A(Z,
    D\mathfrak{q}_0) \right ) dX.
\end{equation}
Indeed, from (\ref{uniform positive density}), $\Leb (\partial
\{\mathfrak{q}_0 > 0 \}) = 0$, thus in order to justify (\ref{weak
geo Eq09}), it is enough to attest such an identity holds true for
balls entirely contained in $\{\mathfrak{q}_0 > 0 \}$ and in
$\{\mathfrak{q}_0 = 0 \}$. If $B \subset \{\mathfrak{q}_0 > 0 \}$,
then by elliptic estimate, $\mathfrak{q}_j$ converges to
$\mathfrak{q}_0$ in a $C^{1,\alpha}$ fashion in $B$. Thus clearly
(\ref{weak geo Eq09}) is true. Now, if $B \subset \{\mathfrak{q}_0
= 0 \}$, then
$$
    \Big [ \div \left ( \A(Z, D\mathfrak{q}_0) \right ) dX \Big ] (B) =0,
$$
so we have to show that $\nu_j(B) \to 0$ as $j\to \infty$. This is
a consequence of nondegeneracy. In fact, let $\tilde{B} \subset\!
\subset B$. If there were a subsequence, $\mathfrak{q}_{j_k}$, for
each $\mathfrak{q}_{j_k} \not \equiv 0$ in $\tilde{B}$, then by
Theorem \ref{strong nondegeneracy}, there should exist points
$P_{k_j} \in \tilde{B}$, such that $\mathfrak{q}_{j_k}(P_{k_j})
\ge c > 0$. Then, passing to another subsequence, $P_{k_j} \to
\overline{P} \in \tilde{B}$, and since $\mathfrak{q}_{j_k}$
converges uniformly to $\mathfrak{q}_{0}$, we would reach the
conclusion that $\mathfrak{q}_{0}(P) > c$, which is not possible.
In conclusion, if $B_k$ is a nested sequence of balls, with $B_k
\nearrow B$, then, for some $j_k \in \mathbb{N}$,
$\mathfrak{q}_{j} \equiv 0$ in $B_k$, for any $j > j_k$.
Therefore, $\nu_j(B) \stackrel{j \to \infty}{\longrightarrow} 0$,
as desired.
\par
Having verified (\ref{weak geo Eq09}), the observation in
(\ref{weak geo Eq8.5}) tells us that
$$
    \div \left ( \A(Z, D\mathfrak{q}_0) \right ) = 0 \textrm{ in }
    B_{2/3},
$$
and as argued before, this leads us to a contradiction on the
nondegeneracy feature of $\mathfrak{q}_0$ assured in Theorem
\ref{strong nondegeneracy}.
\end{proof}
An immediate, yet quite important consequence of Theorem \ref{weak
geometry} is a substantial enhancement of Lemma \ref{Lf is a Radon
Measure} for the measure $\L u_\lambda^\star$.
\begin{theorem}\label{representation} There exists a Borel
function $Q_\lambda$, such that $ \L u_\lambda^\star = Q_\lambda
\lfloor \partial \Omega_\lambda^\star$. That is,
$$
    \int \div \left ( \A(X, Du_\lambda^\star) \right )
    \phi(X) dX = \int_{\partial \Omega_\lambda^\star} Q_\lambda(S)
    \phi(S) d \mathcal{H}^{n-1}(S),
$$
for any $\phi \in C^1_0(\R^n \setminus D)$. Moreover, $Q_\lambda$
bounded away from zero and infinity, that is for a positive
constant $C = C(\lambda, n,\A,D, \Gamma, \varphi)$, there holds
$$
    0< C^{-1} \le Q_\lambda \le C < \infty.
$$
\end{theorem}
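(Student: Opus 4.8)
The plan is to realize the nonnegative Radon measure $\mu:=\L u_\lambda^\star$ (Lemma~\ref{Lf is a Radon Measure}) as a density against surface measure on the free boundary, by proving matching two‑sided $r^{n-1}$ bounds for $\mu$ on balls centered at free boundary points and then invoking a differentiation theorem. First I would \emph{localize} $\mu$. By Theorem~\ref{Exist for penalized problem}, $\L u_\lambda^\star=0$ in $\Omega_\lambda^\star$, and since $u_\lambda^\star$ vanishes identically on the interior of its zero set and $\A(X,0)=0$ by the homogeneity (iv), we also have $\L u_\lambda^\star=0$ there; hence $\mu$ is concentrated on $\partial\Omega_\lambda^\star$. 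Moreover Proposition~\ref{Min is positive around D} keeps $\partial\Omega_\lambda^\star$ at distance $\ge\gamma$ from $\partial D$, so every ball $B_{2r}(Z)$ with $Z\in\partial\Omega_\lambda^\star$ and $r<\gamma/2$ sits well inside $D^C$, and all the estimates of Section~\ref{SECTION - Weak Geometric Properties of the free boundary} are available on such balls.

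The upper bound $\mu(B_r(Z))\le\overline C r^{n-1}$ is the easy half. Testing $\L u_\lambda^\star=\mu$ against a cutoff $\varphi_r$ that equals $1$ on $B_r(Z)$, is supported in $B_{2r}(Z)$, and has $|\nabla\varphi_r|\lesssim 1/r$, and then inserting the growth bound (ii) on $\A$ together with the Lipschitz estimate $|\nabla u_\lambda^\star|\le C\lambda^{1/p}$ of Theorem~\ref{Lipschitz Continuity}, one gets
\[
\mu(B_r(Z))\le\int\varphi_r\,d\mu=\int_{D^C}\langle\A(X,Du_\lambda^\star),D\varphi_r\rangle\,dX\le \Lambda(C\lambda^{1/p})^{p-1}\frac{2}{r}\,\Leb(B_{2r}(Z))\le\overline C r^{n-1}.
\]

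The lower bound $\mu(B_r(Z))\ge\underline c\,r^{n-1}$ is the heart of the matter and the step I expect to be the main obstacle. I would compare $u_\lambda^\star$ with its $\A$‑harmonic replacement $w$ in $B_r(Z)$: since $\L u_\lambda^\star\ge 0$, the comparison principle gives $0\le u_\lambda^\star\le w$, and by the maximum principle and Lipschitz continuity (recall $u_\lambda^\star(Z)=0$) one has $\sup_{B_r(Z)}w\le C\lambda^{1/p}r$. Nondegeneracy (Theorem~\ref{strong nondegeneracy}) produces a point in $B_{r/2}(Z)$ where $u_\lambda^\star\gtrsim r$, hence $w\gtrsim r$ there, and Harnack's inequality for the nonnegative $\A$‑harmonic function $w$ then makes $w\gtrsim r$ on all of $B_{r/2}(Z)$; the uniform positive density of the zero phase, estimate (\ref{uniform positive density}), provides a set $S\subset B_{r/2}(Z)$ with $\Leb(S)\gtrsim r^n$ on which $w-u_\lambda^\star=w\gtrsim r$. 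Since $w-u_\lambda^\star\in W^{1,p}_0(B_r(Z))$, Poincar\'e's inequality forces $\int_{B_r(Z)}|\nabla(w-u_\lambda^\star)|^p\,dX\gtrsim r^n$. On the other hand, testing $\L u_\lambda^\star=\mu$ against $w-u_\lambda^\star$ and using that $w$ is $\A$‑harmonic rewrites $\int_{B_r(Z)}(w-u_\lambda^\star)\,d\mu=\int_{B_r(Z)}\langle\A(X,Du_\lambda^\star)-\A(X,Dw),D(u_\lambda^\star-w)\rangle\,dX$, which by the quantitative strict monotonicity of $\A$ underlying Lemma~\ref{p harmonic functions with same bondary data} dominates $\int_{B_r(Z)}|\nabla(w-u_\lambda^\star)|^p\,dX$ when $p\ge2$ (and, when $1<p<2$, dominates it up to the factor $\alpha(u_\lambda^\star)=(\int|\nabla u_\lambda^\star|^p)^{1-2/p}$, which the Lipschitz bound keeps bounded below by a constant depending on $\lambda,n,p$ — this bookkeeping of powers of $\lambda$ and $r$ in the subquadratic range is where the most care is needed). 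Bounding the same integral above by $(\sup_{B_r(Z)}w)\,\mu(B_r(Z))\le C\lambda^{1/p}r\,\mu(B_r(Z))$ and combining the two estimates yields $\mu(B_r(Z))\gtrsim r^{n-1}$.

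Finally I would package everything through a standard differentiation argument. By Theorem~\ref{weak geometry}, $\partial\Omega_\lambda^\star$ carries a locally finite $\mathcal H^{n-1}$‑measure $\sigma:=\mathcal H^{n-1}\lfloor\partial\Omega_\lambda^\star$ which also satisfies two‑sided $r^{n-1}$ bounds on balls centered at free boundary points, and $\partial\Omega_\lambda^\star$ agrees $\mathcal H^{n-1}$‑a.e. with its reduced boundary. Since both $\mu$ and $\sigma$ are concentrated on $\partial\Omega_\lambda^\star$ and, for each $Z\in\partial\Omega_\lambda^\star$, the ratio $\mu(B_r(Z))/\sigma(B_r(Z))$ stays trapped between two positive constants as $r\to0$, the Besicovitch differentiation theorem gives $\mu\ll\sigma$ and $\sigma\ll\mu$, and the Radon--Nikodym density $Q_\lambda:=d\mu/d\sigma$ is a Borel function with $C^{-1}\le Q_\lambda\le C$ holding $\mathcal H^{n-1}$‑a.e.\ on $\partial\Omega_\lambda^\star$. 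This is exactly the representation $\L u_\lambda^\star=Q_\lambda\lfloor\partial\Omega_\lambda^\star$, with $0<C^{-1}\le Q_\lambda\le C<\infty$.
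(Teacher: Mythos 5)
Your proposal is correct and is precisely the classical Alt--Caffarelli representation argument (two-sided $r^{n-1}$ bounds for the measure $\L u_\lambda^\star$ on balls centered at free boundary points, obtained from the Lipschitz bound, nondegeneracy, positive density of the zero phase and the quantitative monotonicity of Lemma \ref{p harmonic functions with same bondary data}, followed by Besicovitch differentiation against $\mathcal{H}^{n-1}\lfloor\partial\Omega_\lambda^\star$). The paper omits the proof entirely, presenting the theorem as an immediate consequence of Theorem \ref{weak geometry}; what you wrote is exactly the standard argument it is implicitly invoking, with the $1<p<2$ bookkeeping correctly flagged and harmless since the constants are allowed to depend on $\lambda$.
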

As to provide some further insight, allow us to make some loose
comments regarding the representation Theorem
\ref{representation}. The Borel function $Q_\lambda$ should be
understood as a weak notion for the $\partial_\A u_\lambda^\star$
along the reduced free boundary $\redbdry \Omega_\lambda^\star$.
Indeed, in any $C^1$ peace of $\partial \Omega_\lambda^\star$,
there holds
\begin{equation}\label{Relation Qlambda and Normal Der. Eq01}
    Q_\lambda(S) = \langle \A \left (S, D u_\lambda^\star(S) \right ), \nu(S) \rangle,
\end{equation}
where $\nu$ is the unit inward normal vector to $\partial
\Omega_\lambda^\star$ at $S$. However, $\nu(S) = \frac{\nabla
u_\lambda^\star(S)}{\left | \nabla u_\lambda^\star(S) \right |}$,
thus, taking into account the scaling feature of $\A$, property
(c)(iv), from identity (\ref{Relation Qlambda and Normal Der.
Eq01}) we reach that
\begin{equation}\label{Relation Qlambda and Normal Der. Eq02}
    \left | \nabla u_\lambda^\star(S) \right | =
    \sqrt[p-1]{\dfrac{Q_\lambda(S)}{\langle \A \left (S, \nu(S) \right ), \nu(S)
    \rangle}}.
\end{equation}
In a more rigorous way, expression (\ref{Relation Qlambda and
Normal Der. Eq02}) can be proven to hold in terms of an asymptotic
approximation, that is, the following is true:
\begin{theorem}\label{FBC pointwise sense} Let $X_0 \in
\redbdry \Omega_\lambda^\star$. Then, for any $X \in
\Omega_\lambda^\star$ near $X_0$, we have
$$
     u_\lambda^\star(X) = \theta_\lambda(X_0) \left  \langle X - X_0, \nu(X_0)
    \right \rangle^{+}  + o(|X-X_0|),
$$
where $\theta_\lambda(X_0) =
\sqrt[p-1]{\frac{Q_\lambda(X_0)}{\langle \A \left (X_0, \nu(X_0)
\right ), \nu(X_0) \rangle}}$.
\end{theorem}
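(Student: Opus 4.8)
The statement is the pointwise (first-order) asymptotic development of $u_\lambda^\star$ at a reduced free boundary point, so the natural route is a blow-up argument: rescale $u_\lambda^\star$ at $X_0$, extract a global limit using the Lipschitz bound and nondegeneracy, classify the limit via a half-space Liouville theorem for the frozen operator $\A(X_0,\cdot)$, and then read off the slope by pulling back the measure representation of Theorem \ref{representation}. Concretely, set $u_r(X):=\tfrac1r u_\lambda^\star(X_0+rX)$, $r\searrow 0$. By Theorem \ref{Lipschitz Continuity} the family $\{u_r\}$ is nonnegative and uniformly Lipschitz (with constant $\lesssim\lambda^{1/p}$), so along a subsequence $u_r\to u_0$ in $C^0_{\loc}(\R^n)$ with $u_0$ globally Lipschitz. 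Since $X\mapsto\A(X,\xi)$ is $C^\epsilon$ by (\ref{continuity of the medium}), $\A(X_0+rX,\xi)\to\A(X_0,\xi)$ locally uniformly; combining this with the interior $C^{1,\alpha}$ estimates on $\{u_r>0\}$ and passing to the limit in the equation exactly as in the proof of Theorem \ref{Lipschitz Continuity} gives $\div\big(\A(X_0,Du_0)\big)=0$ in $\{u_0>0\}$ and $\div\big(\A(X_0,Du_0)\big)\ge 0$ in $\R^n$. Theorem \ref{strong nondegeneracy} passes to the limit too, so $u_0\not\equiv 0$ and $\{u_0>0\}$ inherits the uniform positive density of Theorem \ref{weak geometry}.

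Next I would identify the positivity set of the blow-up. Because $X_0\in\redbdry\Omega_\lambda^\star$, the rescaled sets $r^{-1}(\Omega_\lambda^\star-X_0)$ converge in $L^1_{\loc}$ to the half-space $H:=\{X:\langle X,\nu(X_0)\rangle>0\}$; the two-sided density bound (\ref{uniform positive density}) together with nondegeneracy upgrades this to Hausdorff convergence of the free boundaries, so $\{u_0>0\}=H$ and $\partial\{u_0>0\}=\partial H$. Rotating so that $\nu(X_0)=e_n$, the function $u_0$ is a nonnegative global solution of $\div(\A(X_0,Du_0))=0$ in $\{x_n>0\}$ vanishing on $\{x_n\le 0\}$. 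The classification step then asserts $u_0(X)=a\,x_n^{+}$ for some constant $a>0$: indeed $u_0$ is Lipschitz and vanishes on the flat boundary, so $0\le u_0(X)\le C x_n^{+}$, and rescaling $u_0$ at all scales while invoking the boundary $C^{1,\alpha}$ regularity for the $X$-independent, positively homogeneous (property (c)(iv)) operator $\div(\A(X_0,D\,\cdot))$ shows that every rescaled limit is an affine function; hence $u_0$ is itself affine, and being $\ge 0$ and zero on $\{x_n=0\}$ it equals $a\,x_n^{+}$, with $a>0$ forced by nondegeneracy.

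It remains to evaluate $a$. A change of variables in Theorem \ref{representation} gives, for $\phi\in C_0^1$, the identity $\L u_r = Q_\lambda(X_0+r\,\cdot)\,\mathcal{H}^{n-1}\lfloor\, r^{-1}(\partial\Omega_\lambda^\star-X_0)$. On one hand $\L u_r\rightharpoonup \L u_0$; on the other, the rescaled perimeter measures converge weakly-$\star$ to $\mathcal{H}^{n-1}\lfloor\{x_n=0\}$ (De Giorgi's blow-up at the reduced point $X_0$, the bounds (\ref{estimate on Hausdorff measure}) excluding any loss or concentration of mass), while $Q_\lambda(X_0+r\,\cdot)$ averages to $Q_\lambda(X_0)$ at an $\mathcal{H}^{n-1}$-Lebesgue point of $Q_\lambda$ on $\redbdry\Omega_\lambda^\star$. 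A direct computation using (c)(iv) and (c)(i) gives $\L u_0 = a^{p-1}\langle\A(X_0,\nu(X_0)),\nu(X_0)\rangle\,\mathcal{H}^{n-1}\lfloor\{x_n=0\}$, and matching the two limits yields $a^{p-1}=Q_\lambda(X_0)/\langle\A(X_0,\nu(X_0)),\nu(X_0)\rangle$, i.e. $a=\theta_\lambda(X_0)$. Since every subsequential blow-up equals the same profile $\theta_\lambda(X_0)\langle X,\nu(X_0)\rangle^{+}$, the full family $u_r$ converges to it as $r\searrow 0$, and unwinding the rescaling is precisely the claimed expansion $u_\lambda^\star(X)=\theta_\lambda(X_0)\langle X-X_0,\nu(X_0)\rangle^{+}+o(|X-X_0|)$.

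The main obstacle I anticipate is the half-space Liouville classification for the merely degenerate-elliptic operator $\A(X_0,\cdot)$: ruling out nonlinear global profiles in $\{x_n>0\}$ with linear growth and zero boundary data must be done carefully, combining boundary $C^{1,\alpha}$ estimates with the homogeneity (c)(iv) (and, for $1<p<2$, the weaker coercivity built into Lemma \ref{p harmonic functions with same bondary data}). Secondary to that is the measure-theoretic bookkeeping in the last paragraph: since $Q_\lambda$ is only Borel, the identification of the slope has to be performed at $\mathcal{H}^{n-1}$-Lebesgue points of $Q_\lambda$ along $\redbdry\Omega_\lambda^\star$, and one must ensure the rescaled boundary measures neither disperse nor concentrate, which is exactly where the two-sided estimates of Theorem \ref{weak geometry} are used.
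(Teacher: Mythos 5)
Your proposal tracks the paper's proof in its essentials: blow up at $X_0$, identify the positivity set of the limit as the half-space $\{\langle X,\nu(X_0)\rangle>0\}$, and extract the slope from the measure representation of Theorem \ref{representation} (the paper does the last two steps in the opposite order, first deducing $\nabla\mathfrak{q}_0\cdot\nu(X_0)\equiv\theta_\lambda(X_0)$ on the hyperplane from the convergence of the blown-up measures, then classifying the profile; the computation is the same). The one place you genuinely diverge is the Liouville classification of the half-space profile, and there your argument as written has a gap: from ``every rescaled limit of $u_0$ is affine'' you cannot conclude ``$u_0$ is affine'' --- blow-ups and blow-downs of a non-affine function can perfectly well all be affine. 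The ingredients you name do suffice, but the correct deduction is quantitative: the scaling-invariant boundary $C^{1,\alpha}$ estimate for the frozen, homogeneous operator on half-balls of radius $R$ with zero Dirichlet data bounds the $C^{\alpha}$ seminorm of $Du_0$ on the half-ball of radius $R/2$ by $CR^{-\alpha}\|Du_0\|_{L^\infty}$, which tends to $0$ as $R\to\infty$ since $u_0$ is globally Lipschitz; hence $Du_0$ is constant. The paper avoids boundary regularity altogether: having already shown the normal derivative of $\mathfrak{q}_0$ is the constant $\theta_\lambda(X_0)$ along the hyperplane, it takes the odd reflection $\mathfrak{q}_0^{*}$, notes that the matching fluxes make it an entire solution with bounded gradient, and invokes the interior Liouville-type result of \cite{KSZ} to conclude $\mathfrak{q}_0^{*}$ is affine. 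Your route buys independence from any reflection symmetry of $\A(X_0,\cdot)$ at the price of boundary $C^{1,\alpha}$ theory for degenerate operators; the paper's is shorter given the citation.

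Two smaller points. Your identification of the slope is carried out at $\mathcal{H}^{n-1}$-Lebesgue points of $Q_\lambda$ on $\redbdry\Omega_\lambda^\star$, which you flag explicitly; the paper is silent on this restriction even though its own matching of measures needs it just as much, so this is honesty rather than a defect relative to the source. And your insistence that the two-sided density and $\mathcal{H}^{n-1}$ bounds of Theorem \ref{weak geometry} are what prevent dispersion or concentration of the rescaled boundary measures is exactly right and is implicit in the paper's ``one verifies from Theorem \ref{representation}.''
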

\begin{proof}
Indeed, consider a convergent blow-up sequence
\begin{equation}\label{FBC point Eq01}
    \mathfrak{q}_r (Y) := \dfrac{1}{r} u_\lambda^\star(X_0 + rY)
    \stackrel{r\searrow 0}{\longrightarrow} \mathfrak{q}_0.
\end{equation}
Easily, from standard geometric-measures arguments, combined with
nondegeneracy and the convergence in (\ref{FBC point Eq01}), we
see that
\begin{equation}\label{FBC point Eq02}
    \mathfrak{q}_0 \equiv 0 \textrm{ in } \left \{ X \in \R^n
    \suchthat \langle X, \nu(X_0) \rangle < 0 \right \}
    \quad \textrm{and} \quad
    \{\mathfrak{q}_0 > 0 \} = \left \{ X \in \R^n
    \suchthat \langle X, \nu(X_0) \rangle > 0 \right \}.
\end{equation}
Moreover
\begin{equation}\label{FBC point Eq03}
     \div \left ( \A \left ( X_0, D\mathfrak{q}_0 \right ),
     D\mathfrak{q}_0 \right ) = 0 \textrm{ in } \{\mathfrak{q}_0 > 0
     \}.
\end{equation}
Notice that $\partial \left \{\mathfrak{q}_0> 0 \right \}$ is the
hyperplane $\left \{ X \in \R^n \suchthat \langle X, \nu(X_0)
\rangle = 0 \right \}$: a smooth surface. One verifies from
Theorem \ref{representation} that
\begin{equation}\label{FBC point Eq04}
    \div \left ( \A \left ( X_0, D\mathfrak{q}_0 \right ),
    D\mathfrak{q}_0 \right ) = Q_\lambda(X_0) \lfloor
    \left \{ X \in \R^n \suchthat \langle X, \nu(X_0) \rangle = 0 \right
    \},
\end{equation}
hence, reasoning as before, we reach the following conclusion
\begin{equation}\label{FBC point Eq05}
      \nabla \mathfrak{q}_0 (Y) \cdot \nu(X_0) = \theta_\lambda(X_0), \quad \forall Y  \in \left \{ \langle X, \nu(X_0) \rangle = 0 \right \}.
\end{equation}
Recall $\mathfrak{q}_0$ is Lipschitz continuous in the entire
$\R^n$. Let $\mathfrak{q}^*_0$ be the odd reflation of
$\mathfrak{q}_0$ with respect to the hyperplane $\left \{ X \in
\R^n \suchthat \langle X, \nu(X_0) \rangle = 0 \right \}$. It is
easy to verify that $\|\nabla \mathfrak{q}^*_0\|_{L^\infty(\R^n)}
= \|\nabla \mathfrak{q}_0\|_{L^\infty(\R^n)} < C$ and that $\div
\left ( \A \left ( X_0, D\mathfrak{q}^{*}_0 \right ),
D\mathfrak{q}^{*}_0 \right ) = 0$  in the whole $\R^n$. From the
$C^{1,\alpha}$ regularity of $\mathfrak{q}^{*}_0$, we can employ
the beautiful and recent blow-up argument from \cite{KSZ} to
conclude that $\mathfrak{q}^{*}_0$ is an affine function. Thus in
view of (\ref{FBC point Eq05}), we obtain
$$
    \mathfrak{q}_0(X) = \theta_\lambda(X_0) \left  \langle X - X_0, \nu(X_0)
    \right \rangle^{+},
$$
and the Theorem is proven.
\end{proof}
We finish this section by proving the reduced free boundary,
$\redbdry \Omega_\lambda^\star$, admits a nice ``stratification".
More precisely, we have
\begin{theorem} There exists a collection of $C^1$ hypersurfaces
$\left \{ \mathfrak{S}_j \right \}_{j\ge 1}$, and compact subsets
${K}_j \subset \mathfrak{S}_j$, such that
$$
    \mathcal{H}^{n-1} \left ( \redbdry \Omega_\lambda^\star
    \setminus \bigcup\limits_{j\ge 1} K_j \right ) = 0.
$$
Furthermore, if $X \in K_j$, the unit outward theoretical normal
vector $-\nu(X)$ to $\redbdry \Omega_\lambda^\star$ is normal to
$\mathfrak{S}_j$.
\end{theorem}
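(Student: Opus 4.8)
The plan is to derive the claimed stratification from the structure theory of sets of finite perimeter, since Theorem \ref{weak geometry} already supplies everything needed about $\Omega_\lambda^\star$: it is a set of locally finite perimeter, $\mathcal{H}^{n-1}(\partial \Omega_\lambda^\star \setminus \redbdry \Omega_\lambda^\star) = 0$, and the upper density bound in (\ref{estimate on Hausdorff measure}) makes $\mathcal{H}^{n-1}\lfloor \redbdry \Omega_\lambda^\star$ a Radon measure that is finite on compact subsets of $\R^n\setminus D$. I would also record at the outset that, by the blow-up analysis of Theorem \ref{FBC pointwise sense} (or simply by the very definition of the reduced boundary), the vector $\nu(X)$ attached to $X \in \redbdry \Omega_\lambda^\star$ is precisely the measure-theoretic inward unit normal of $\Omega_\lambda^\star$ at $X$.

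The first main step is to invoke De Giorgi's structure theorem: the reduced boundary $\redbdry \Omega_\lambda^\star$ is countably $(n-1)$-rectifiable, hence it can be covered, up to an $\mathcal{H}^{n-1}$-negligible set, by countably many $C^1$ hypersurfaces $\{\mathfrak{S}_j\}_{j\ge 1}$ of $\R^n$. To pass from ``hypersurfaces'' to ``compact subsets $K_j\subset\mathfrak{S}_j$'', I would first disjointify, replacing $\mathfrak{S}_j\cap\redbdry\Omega_\lambda^\star$ by $E_j := (\mathfrak{S}_j \cap \redbdry \Omega_\lambda^\star) \setminus \bigcup_{i<j}\mathfrak{S}_i$, and then, using inner regularity of the Radon measure $\mathcal{H}^{n-1}\lfloor\redbdry\Omega_\lambda^\star$, exhaust each $E_j$ from inside by compact sets; relabelling the resulting countable family yields compact $K_j\subset\mathfrak{S}_j$ with $\mathcal{H}^{n-1}(\redbdry\Omega_\lambda^\star\setminus\bigcup_j K_j)=0$. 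This part is routine bookkeeping.

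The step that genuinely requires an argument, and the one I would flag as the crux, is the compatibility of normals: on each $K_j$ the measure-theoretic normal $-\nu$ of $\redbdry\Omega_\lambda^\star$ must agree, up to sign, with the classical normal of the $C^1$ surface $\mathfrak{S}_j$. The mechanism is uniqueness of tangent hyperplanes. The $C^1$ hypersurface $\mathfrak{S}_j$ carries a classical tangent hyperplane $T_X\mathfrak{S}_j$ at each of its points $X$, while De Giorgi's theorem provides, at every $X\in\redbdry\Omega_\lambda^\star$, a unique approximate tangent hyperplane, namely $\nu(X)^\perp$. At $\mathcal{H}^{n-1}$-a.e.\ $X\in K_j$ the set $K_j$ has density one with respect to $\mathcal{H}^{n-1}\lfloor\mathfrak{S}_j$ (Lebesgue differentiation on the $C^1$ manifold $\mathfrak{S}_j$), so the approximate tangent hyperplane of $K_j\subset\redbdry\Omega_\lambda^\star$ at $X$, which equals $\nu(X)^\perp$, is forced to coincide with $T_X\mathfrak{S}_j$; equivalently, $-\nu(X)$ is normal to $\mathfrak{S}_j$ at $X$. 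Discarding from each $K_j$ the corresponding $\mathcal{H}^{n-1}$-null exceptional set (harmless for the covering property) finishes the argument.

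If one prefers not to quote rectifiability as a black box, an alternative route is to manufacture the $\mathfrak{S}_j$ directly from Theorem \ref{FBC pointwise sense}: near $\mathcal{H}^{n-1}$-a.e.\ free boundary point the blow-ups are the half-plane profiles $\theta_\lambda(X_0)\langle X - X_0, \nu(X_0)\rangle^{+}$, which --- together with the uniform two-phase density estimates of Theorem \ref{weak geometry} --- is exactly the hypothesis of the classical lemma that a set with an approximate tangent plane at each point and locally finite $\mathcal{H}^{n-1}$ measure is covered $\mathcal{H}^{n-1}$-a.e.\ by $C^1$ graphs; the normal compatibility then drops out of the same blow-up, the graph being tangent to $\nu(X_0)^\perp$.
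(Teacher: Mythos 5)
Your argument is correct, and at bottom it rests on the same pillar as the paper's: De Giorgi's structure theorem for sets of locally finite perimeter, which is applicable here precisely because Theorem \ref{weak geometry} guarantees that $\Omega_\lambda^\star$ has locally finite perimeter. The difference is in how the $\mathcal{H}^{n-1}$-negligibility of the exceptional set is secured. You quote the classical statement in full strength (countable rectifiability of $\redbdry \Omega_\lambda^\star$ together with the identity $|D\chi_{\Omega_\lambda^\star}| = \mathcal{H}^{n-1}\lfloor \redbdry \Omega_\lambda^\star$), and then supply the routine disjointification/inner-regularity bookkeeping and the tangent-plane-uniqueness argument for the compatibility of $-\nu$ with the normals of the $\mathfrak{S}_j$. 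The paper instead inserts a preliminary quantitative step: a mollification computation based on the representation $\L u_\lambda^\star = Q_\lambda\lfloor\partial\Omega_\lambda^\star$ of Theorem \ref{representation} and the positive lower bound $Q_\lambda \ge C^{-1}$, yielding $|D\chi_{\Omega_\lambda^\star}|(B) \ge c\,\mathcal{H}^{n-1}\left( B\cap\redbdry\Omega_\lambda^\star\right)$, i.e.\ absolute continuity of $\mathcal{H}^{n-1}\lfloor\redbdry\Omega_\lambda^\star$ with respect to the perimeter measure, before invoking De Giorgi. That extra step lets one get by with the weaker, ``perimeter-measure-a.e.'' form of the decomposition and ties the measure-theoretic boundary data to the PDE object $Q_\lambda$; your route is shorter if one accepts the full classical theorem, and your second (blow-up-based) alternative is consistent with Theorem \ref{FBC pointwise sense} and would also work. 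No gap either way.
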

\begin{proof}
Let $B = B_r(X_0)$ be a generically ball centered at a point of
the reduced free boundary. By the Lipschitz continuity of
$u_\lambda^\star$ and the ellipticity of $\A$, we know there
exists a constant $L$, such that
\begin{equation}\label{C1 stratif. Eq01}
    \sup\limits_{B} \A(X, D u_\lambda^\star) \le \dfrac{L}{6}.
\end{equation}
Let $\mathcal{I}$ be your favorite nonnegative radially symmetric
smooth function whose support is $B_1$. Normalize it so that $0\le
\mathcal{I} \le 1$; $\|\mathcal{I}\|_{L^1(B_1)} = 1$. Let
$\mathcal{I}_\epsilon$ be the family of mollification induced by
$\mathcal{I}$, that is, $\mathcal{I}_\epsilon(X) = \epsilon^{-n}
\mathcal{I}_\epsilon(\epsilon^{-1}X).$ Also, select your favorite
nonnegative function $\eta \in C^{\infty}_0(B)$, satisfying $\sup
\eta = L^{-1}$. For sake of notation convenience, let us call $V
(X) :=  \A(X, D u_\lambda^\star)$. If $\nu$ denotes the Radon
measure $D \chi_{\Omega_\lambda^\star}$, we have, for $\epsilon
<\!\!< 1$,
\begin{equation}\label{C1 stratif. Eq02}
    \begin{array}{lll}
        \nu (B) & := & \sup \left \{ \displaystyle \int_{\Omega_\lambda^\star}
        \div ~\psi dX \suchthat \psi \in C^1_0(B; \mathbb{R}^n ),
        ~ \| \psi \| \le 1 \right \} \\
        & \ge & \displaystyle \int_{\Omega_\lambda^\star}
        \div  \left ( (\eta V) * \mathcal{I}_\epsilon  \right
        ) dX \\
        & = & \displaystyle \int_{\Omega_\lambda^\star}
        \mathcal{I}_\epsilon * \div  \left ( \eta V \right
        ) dX \\
        & = & \displaystyle \int_{\Omega_\lambda^\star}
        \div  \left ( \eta V \right
        ) dX + O(1) \\
        & = & \displaystyle \int_{\Omega_\lambda^\star} V \cdot \nabla \eta dX +
        \displaystyle \int_{\redbdry \Omega_\lambda^\star} Q_\lambda(S)
        \eta(S) d \mathcal{H}^{n-1}(S) + O(1).
    \end{array}
\end{equation}
Letting $\epsilon \to 0$ in (\ref{C1 stratif. Eq02}) and
afterwards letting $\eta \to L^{-1}$, we conclude there exists a
constant $c(\lambda, \A, n, \Gamma, \varphi)$, such that
\begin{equation}\label{C1 stratif. Eq02}
    \nu(B) \ge c \mathcal{H}^{n-1} \left ( B \cap \redbdry
    \Omega_\lambda^\star \right ).
\end{equation}
In particular $ \mathcal{H}^{n-1} \lfloor \redbdry
\Omega_\lambda^\star$ is absolutely continuous with respect to $D
\chi_{\Omega_\lambda^\star}$. Now, arguing as in \cite{DeGiorgi}
(see also \cite{Giusti} page 54 or \cite{EG} page 205) we prove
the Theorem.
\end{proof}

\section{Existence of an optimal configuration for problem (\ref{Minimization
Problem}) in any dimension} \label{SECTION - Existence theory in
any dimension}

In section \ref{SECTION - Existence up to dimension p}, upon a
restriction on the dimension, we have shown problem
(\ref{Minimization Problem}) has a minimal configuration. The
strategy there was to let the penalizing parameter $\lambda$ go to
infinity and use appropriate estimates that becomes available
under the constraint $n < p$, due to the Sobolev Imbedding
Theorem.
\par
The goal of this section is to explore the geometric-measure
properties of the free boundary $\partial \Omega_\lambda^\star$,
established in the previous section, to settle to existence of an
optimal design for problem (\ref{Minimization Problem}) in all
dimensions. However, as the readers should expect, the analysis
here is rather more delicate as we will not be able to pass the
limit on the penalty parameter $\lambda$. Instead, we will show
that if we adjust the penalty term $\varrho_\lambda$ properly, any
optimal configuration, $\Omega^{\star} = \Omega^{\star}_\lambda$,
for problem (\ref{Penalized Min Problem}) will obey
$$
   \Leb \left ( \Omega^{\star} \setminus D \right ) \le \iota.
$$
Therefore, $\Omega^{\star}$ itself will be an optimal design for
our primary optimization problem (\ref{Minimization Problem}) and
all the regularity features proven to hold for a solution to
problem (\ref{Penalized Min Problem}) will automatically extend to
a solution to problem (\ref{Minimization Problem}).
\par
Before continuing, let us explain our strategy in a bit more
technical terms. We will perform a small perturbation on an
optimal configuration $\Omega_\lambda^\star$, around a point on
the reduced free boundary: the portion of $\partial
\Omega_\lambda^\star$ where we can replace classical differential
geometry arguments by geometric-measures ones. We will not compute
the Borel function $Q_\lambda$ of Theorem \ref{representation}, as
it is an extraordinary hard task: the free boundary condition for
problem (\ref{Penalized Min Problem}) is expected to be highly
nonlocal. Instead, we will show that assuming
$\Leb(\Omega_\lambda^\star \setminus D) > \iota$ enforces a
universal bound to the penalty parameter $\lambda$.
\par
With the strategy well understood, let us establish the first
supporting result towards the main goal of this section.
\begin{lemma}\label{estimate by above on Q lambda}
There exists a constant $M >0$, depending on dimension, $D$,
$\varphi$, $\Gamma$ and $\A$, but independent of $\lambda$, such
that
$$
    \inf\limits_{\redbdry
    \Omega_\lambda^\star} Q_\lambda < M,
$$
where $Q_\lambda$ is the Borel function in Theorem
\ref{representation}.
\end{lemma}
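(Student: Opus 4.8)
The plan is to read off the total flux through the free boundary from Theorem~\ref{representation}, bound it above by a $\lambda$-independent constant, and bound the size of the free boundary below by a $\lambda$-independent constant; the infimum of $Q_\lambda$ is then forced to be small. Concretely, since $u_\lambda^\star\in\mathcal V$ and its free boundary sits at distance $>\gamma$ from $\partial D$ (Proposition~\ref{Min is positive around D}), combining Theorem~\ref{representation}, identity~\eqref{meaure mu is normal bdry int} of Lemma~\ref{Lf is a Radon Measure}, and the fact that $\mathcal H^{n-1}(\partial\Omega_\lambda^\star\setminus\redbdry\Omega_\lambda^\star)=0$ (Theorem~\ref{weak geometry}) gives
$$
    \int_{\redbdry\Omega_\lambda^\star}Q_\lambda(S)\,d\mathcal H^{n-1}(S)
    \;=\; \mu_{u_\lambda^\star}(\mathbb R^n\setminus D)
    \;=\; \int_{\partial D}\partial_\A u_\lambda^\star\,d\H .
$$
Thus I need (i) a constant $\overline C$, independent of $\lambda$, with $\int_{\partial D}\partial_\A u_\lambda^\star\,d\H\le\overline C$, and (ii) a constant $c_0>0$, independent of $\lambda$, with $\mathcal H^{n-1}(\redbdry\Omega_\lambda^\star)\ge c_0$; since $Q_\lambda\ge 0$ this yields $c_0\cdot\inf_{\redbdry\Omega_\lambda^\star}Q_\lambda\le\overline C$, and $M:=2\overline C/c_0$ works.

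For (i) I would start from the competitor inequality $\int_{\partial D}\Gamma(X,\partial_\A u_\lambda^\star)\,d\H\le\overline C_0$ (a fixed smooth $\mathcal O$ with $\Leb(\mathcal O\setminus D)=\iota$, exactly as in~\eqref{bound grad Eq01}), with $\overline C_0$ independent of $\lambda$. Fix $Y_0\in\partial D$. Property~(iii) of $\Gamma$ gives $\Gamma(Y_0,\partial_\A u_\lambda^\star(X))\le L\,\Gamma(X,\partial_\A u_\lambda^\star(X))$ a.e.\ on $\partial D$, so after integrating and applying Jensen's inequality to the convex function $\Gamma(Y_0,\cdot)$,
$$
    \Gamma\!\left(Y_0,\ \tfrac1\alpha\!\int_{\partial D}\!\partial_\A u_\lambda^\star\,d\H\right)
    \;\le\; \tfrac1\alpha\!\int_{\partial D}\!\Gamma(Y_0,\partial_\A u_\lambda^\star)\,d\H
    \;\le\; \tfrac{L\,\overline C_0}{\alpha},
$$
where $\alpha=\mathcal H^{n-1}(\partial D)$; since $\Gamma(Y_0,\cdot)$ is increasing and coercive (cf.\ \eqref{coersivity} and property~(iii)), inverting it produces the claimed $\lambda$-independent bound. (Alternatively one may invoke the uniform $C^{1,\alpha}$ estimate for the $\A$-harmonic function $u_\lambda^\star$ in $D_\gamma$ available from~\eqref{Continuity of A close to D} together with $0\le u_\lambda^\star\le\sup_{\partial D}\varphi$.)

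For (ii) --- which is the real point --- one must avoid the density estimates of Theorem~\ref{weak geometry}, whose constants blow up with $\lambda^{1/p}$, and argue globally instead. From~\eqref{bound grad Eq01} one has $\lambda\big(\Leb(\Omega_\lambda^\star\setminus D)-\iota\big)^{+}\le\overline C_0$, hence $\Leb(\Omega_\lambda^\star\setminus D)\le\iota+\overline C_0$, so the set $\Omega_\lambda^\star\cup D$ (which is bounded, by the nondegeneracy of Theorem~\ref{strong nondegeneracy} together with this volume bound) has finite Lebesgue measure, with $\Leb(D)\le\Leb(\Omega_\lambda^\star\cup D)\le\Leb(D)+\iota+\overline C_0$, and finite perimeter whose topological --- hence reduced --- boundary is the free boundary $\redbdry\Omega_\lambda^\star$, because $D_\gamma\subset\Omega_\lambda^\star$ fills the outer collar of $\partial D$ while $D$ fills the inner side, so $\partial D$ is interior to $\Omega_\lambda^\star\cup D$. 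The isoperimetric inequality and Theorem~\ref{weak geometry} then give
$$
    \mathcal H^{n-1}(\redbdry\Omega_\lambda^\star)
    \;\ge\;\Per(\Omega_\lambda^\star\cup D)
    \;\ge\; c_n\,\Leb(\Omega_\lambda^\star\cup D)^{\frac{n-1}{n}}
    \;\ge\; c_n\,\Leb(D)^{\frac{n-1}{n}}\;=:\;c_0\;>\;0,
$$
with $c_n$ dimensional, so $c_0$ depends only on $n$ and $D$. Plugging (i) and (ii) into the displayed flux identity finishes the proof. The main obstacle is precisely recognizing that the weak-geometry constants of Theorem~\ref{weak geometry} are $\lambda$-dependent and therefore useless here, and that the required perimeter lower bound must instead be extracted from the uniform volume bound via isoperimetry.
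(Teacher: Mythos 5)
Your proposal is correct and follows essentially the same route as the paper: identify $\int_{\redbdry\Omega_\lambda^\star}Q_\lambda\,d\mathcal{H}^{n-1}$ with the flux $\int_{\partial D}\partial_\A u_\lambda^\star\,d\H$ via Theorem \ref{representation} and Lemma \ref{Lf is a Radon Measure}, bound that flux above independently of $\lambda$ by the fixed competitor together with property (iii) of $\Gamma$, Jensen's inequality and coercivity, and bound $\mathcal{H}^{n-1}(\redbdry\Omega_\lambda^\star)$ below by a $\lambda$-independent constant via the isoperimetric inequality. The only (harmless) divergence is in the volume lower bound feeding the isoperimetric step: the paper extracts it from the Trace Theorem applied to $u_\lambda^\star$ on $\partial D$, whereas you simply use $\Leb(D)>0$ after observing that $\Omega_\lambda^\star\supset D$ has uniformly bounded, strictly positive measure --- which works just as well.
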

\begin{proof}
Indeed, in the lights of Lemma \ref{control on gradient}, there
exists a constant $C$, independent of $\lambda$, such that
$\|u_\lambda^\star\|_{W^{1,p}} \le C$. Thus, from the Trace
Theorem for Sobolev functions, we can write
$$
   \|u_\lambda^\star\|_{W^{1,p}} \cdot
   \left [\Leb \left( \{ u_\lambda^\star > 0 \} \right ) \right
    ]^{\frac{1}{p'}} \ge
     \int_{\partial D} \varphi(Z) d\mathcal{H}^{n-1}(Z).
$$
The above estimate combined with the Isoperimetric Inequality
assures the existence of a constant $\underline{c}_1 > 0$,
independent of $\lambda$, for which the following estimate holds
\begin{equation} \label{estimate by above on Q lambda Eq01}
    \mathcal{H}^{n-1} \left( \redbdry \Omega_\lambda^\star \right ) \ge
    \underline{c}_1.
\end{equation}
From (\ref{estimate by above on Q lambda Eq01}) and the
representation in Theorem \ref{representation}, we have
\begin{equation} \label{estimate by above on Q lambda Eq02}
    \begin{array}{lll}
        \displaystyle \int_{\partial D} \partial_\A u_\lambda^\star(X) d\H &=&
        \displaystyle \int_{\redbdry \Omega_\lambda^\star}  Q_\lambda(X)
        d\H\\
        &\ge& \underline{c}_1 \inf\limits_{\redbdry
        \Omega_\lambda^\star} Q_\lambda.
    \end{array}
\end{equation}
Now, in view of estimate (\ref{bound grad Eq04}), for each $Y \in
\partial D$ fixed, we establish the following estimate
$$
    \int_{\partial D} \Gamma \left ( Y,
    \underline{c}_2 \cdot \left [
    \inf\limits_{\redbdry \Omega_\lambda^\star} Q_\lambda \right ]
    \right ) d\H \le \int_{\partial D} \Gamma \left ( Y,
    \intav{\partial D}{\partial_\A u_\lambda^\star}
    \right ) d\H \le C_2.
$$
Integrating the above estimate with respect to $Y$ over $\partial
D$ and arguing as before,  we conclude the proof of the Lemma.
\end{proof}

We now pass to describe the mathematical setup for the suitable
perturbation technique we shall employ on $\Omega_\lambda^\star$
near a point on the reduced free boundary. Initially, select and
fix, throughout this section, a free boundary point $Z_0 \in
\redbdry \Omega_\lambda^\star$, such that
\begin{equation}\label{Existence ALL Dimensions Eq01}
    Q_\lambda(Z_0) \le 5 \inf\limits_{\redbdry \Omega_\lambda^\star}
    Q_\lambda \le M_1,
\end{equation}
where $M_1$ depends only on dimension, $\A$, $D$, $\Gamma$ and
$\varphi$, but it is independent of $\lambda$. The existence of
such a point is guaranteed by Lemma \ref{estimate by above on Q
lambda}.
\par
Let $\psi \colon \mathbb{R} \to \mathbb{R}$ be your favorite
nonnegative smooth function whose support equals $[0,1]$.
Normalize it so that
$$
    \int \psi(\tau)d \tau = 1.
$$
For a fixed positive, but small, real number $\alpha$, we define
the inward perturbation map around $Z_0$ as
\begin{equation}\label{Perturbation}
    \Phi_r(X) := \left \{
        \begin{array}{cr}
            X - \alpha r \psi \left ( \dfrac{|X-Z_0|}{r} \right ) \nu(Z_0) & X \in B_r(Z_0)
            \\
            X & X \not \in B_r(Z_0).
        \end{array}
        \right.
\end{equation}
Here, $\nu(Z_0)$ denotes the theoretical measure outward normal
vector at $Z_0$. The idea now is to compare $\Omega_\lambda^\star$
with its inward perturbed configuration given by:
\begin{equation}\label{def of inward pert configuration}
    \Omega_r :=  \Phi_r\left ( \Omega_\lambda^\star \right ).
\end{equation}
For that, let us call $u_r$ the $\A$-potential associated to
$\Omega_r$, that is, $u_r$ is the solution to
\begin{equation}\label{A potential of perturbated configuration}
\left \{
\begin{array}{rll}
\L u_r &=& 0 \textrm{ in } \Omega_r \setminus D \\
u_r & = & \varphi \textrm{ on } \partial D \\
u_r &=& 0 \textrm{ on } \partial \Omega_r
\end{array}
\right.
\end{equation}
Although it is possible to compare $u_r$ and $u$ directly, it
turns out to the more convenient to use the auxiliary function,
$v_r$, implicitly  by
\begin{equation}\label{vr}
    v_r \left( \Phi_r(X) \right ) = u_\lambda^\star(X).
\end{equation}
Notice that $\left ( \{ v_r > 0 \}, v_r \right )$ is not suitable
for our minimization problem (\ref{Penalized Min Problem}). Also
it not efficient to compare it with $u_\lambda^\star$ in terms of
the minimization problem (\ref{Weak Min Problem}), since
$\partial_\A u_\lambda^\star \equiv \partial_\A v_r.$ Our strategy
is to compare $v_r$ with $u_\lambda^\star$ and with $u_r$
separately and then combine these information using $v_r$ as a
bridge from $u_r$ and $u_\lambda^\star$.
\par
The next two Lemmas are from \cite{OT}, Section 4, though in that
paper the computations are carried out only for the $p$-Laplacian
operator. Thus we decide to include in this present work
``economic versions" of their proofs as a courtesy to the readers.
\begin{lemma}\label{estimate on Leb meausre on the perturbation}
With the notation previously set, we have
$$
     \Leb \left ( \{ u > 0 \} \right ) - \Leb \left (  \{ v_r > 0 \}  \right )
     = M_2 \alpha r^{n} + o(r^{n}),
$$
for a universal constant $M_2 > 0$.
\end{lemma}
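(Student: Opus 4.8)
The plan is to turn the statement into a change-of-variables computation followed by a blow-up at $Z_0$. Since $v_r(\Phi_r(X)) = u_\lambda^\star(X)$ and $\Phi_r$ is a bi-Lipschitz homeomorphism of $\R^n$ (a diffeomorphism once $\alpha < \|\psi'\|_{L^\infty}^{-1}$) which equals the identity outside $B_r(Z_0)$, one has $\{v_r > 0\} = \Phi_r(\{u_\lambda^\star > 0\}) = \Phi_r(\Omega_\lambda^\star) = \Omega_r$, and, by the area formula,
$$\Leb(\{u > 0\}) - \Leb(\{v_r > 0\}) = \int_{\Omega_\lambda^\star \cap B_r(Z_0)}\big(1 - |\det D\Phi_r(X)|\big)\,dX .$$
Thus the whole computation localizes in $B_r(Z_0)$.

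Next I would compute the Jacobian explicitly. Writing $\phi_r(X) := \alpha r\,\psi(|X-Z_0|/r)$ so that $\Phi_r(X) = X - \phi_r(X)\,\nu(Z_0)$, the matrix $D\Phi_r = I - \nu(Z_0)\otimes\nabla\phi_r$ is a rank-one perturbation of the identity, hence
$$\det D\Phi_r(X) = 1 - \langle\nu(Z_0),\nabla\phi_r(X)\rangle = 1 - \alpha\,\psi'\!\left(\tfrac{|X-Z_0|}{r}\right)\Big\langle\nu(Z_0),\tfrac{X-Z_0}{|X-Z_0|}\Big\rangle ,$$
which is positive for $\alpha$ small. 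Plugging this in and rescaling $X = Z_0 + rY$ gives
$$\Leb(\{u>0\}) - \Leb(\{v_r>0\}) = \alpha r^{n}\int_{E_r\cap B_1}\psi'(|Y|)\Big\langle\nu(Z_0),\tfrac{Y}{|Y|}\Big\rangle\,dY, \qquad E_r := \tfrac{1}{r}\big(\Omega_\lambda^\star - Z_0\big) .$$

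The final step is the passage to the limit $r\searrow 0$. By Theorem \ref{weak geometry}, $\Omega_\lambda^\star$ is a set of locally finite perimeter with two-sided uniform density bounds; choosing (as we may at $\mathcal{H}^{n-1}$-a.e. point, hence compatibly with (\ref{Existence ALL Dimensions Eq01})) $Z_0$ to be a point of the reduced boundary $\redbdry\Omega_\lambda^\star$, the standard blow-up for sets of finite perimeter gives $\chi_{E_r}\to\chi_H$ in $L^1_{\mathrm{loc}}(\R^n)$, where $H = \{Y : \langle Y,\nu(Z_0)\rangle \le 0\}$ is the half-space into whose interior $\Phi_r$ pushes $\Omega_\lambda^\star$. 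Since $\psi'$ is bounded and $B_1$ has finite measure, dominated convergence yields
$$\int_{E_r\cap B_1}\psi'(|Y|)\Big\langle\nu(Z_0),\tfrac{Y}{|Y|}\Big\rangle\,dY \xrightarrow[r\to 0]{} M_2 := \int_{H\cap B_1}\psi'(|Y|)\Big\langle\nu(Z_0),\tfrac{Y}{|Y|}\Big\rangle\,dY ,$$
which is precisely the claimed expansion $M_2\alpha r^n + o(r^n)$. To see $M_2 > 0$, take coordinates with $\nu(Z_0) = e_n$ and pass to polar coordinates: $M_2$ factors as
$$M_2 = \left(\int_0^1\psi'(\rho)\rho^{n-1}\,d\rho\right)\left(\int_{S^{n-1}\cap\{\omega_n\le 0\}}\omega_n\,d\mathcal{H}^{n-1}(\omega)\right),$$
and integration by parts gives $\int_0^1\psi'(\rho)\rho^{n-1}\,d\rho = -(n-1)\int_0^1\psi(\rho)\rho^{n-2}\,d\rho < 0$, while the spherical factor is also $< 0$; hence $M_2$ is strictly positive and depends only on $n$ and $\psi$.

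The main obstacle is the blow-up convergence $\chi_{E_r}\to\chi_H$ and the attendant control of the error as genuinely $o(r^n)$: this is exactly where the weak geometry of Theorem \ref{weak geometry} — local finiteness of perimeter together with the uniform density estimates (\ref{uniform positive density}), which rule out a degenerate blow-up — is indispensable. Everything else is routine: the area formula, the rank-one determinant identity, and an elementary one-variable integration by parts.
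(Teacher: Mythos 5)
Your proof is correct and follows essentially the same route as the paper's: a change of variables with the explicit rank-one Jacobian of $\Phi_r$, followed by the $L^1_{\mathrm{loc}}$ blow-up of $\Omega_\lambda^\star$ at the reduced-boundary point $Z_0$ to the limiting half-space. The only differences are cosmetic --- you subtract the two volumes before passing to the limit rather than computing each separately, and you additionally verify the strict positivity of $M_2$ via integration by parts, which the paper asserts without computation.
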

\begin{proof}
For sake of notation convenience, we will write $u$ for
$u_\lambda^\star$. For each $r>0$ small, we consider the
$r$-normalization of $u$ around $Z_0$, $u_r \colon B_1 \to
\mathbb{R}$, defined as

$$
    u_r(Y) := \dfrac{1}{r} u \big (Z_0 + rY \big ).
$$
Since $Z_0 \in \redbdry \Omega_\lambda^\star$,
\begin{equation}\label{Had0}
    B_1 \cap \left \{ u_r > 0 \right \} \stackrel{r\to 0}{\longrightarrow}
    \left \{ Y \in B_1 ~ \big | ~ \langle Y, \nu(Z_0) \rangle < 0 \right \},
\end{equation}
in the sense that the characteristic functions of the above sets
in the LHS converge to the characteristic function of the set in
the RHS in the $L^1_\textrm{loc}(\mathbb{R}^n)$ topology.  One
easily sees, by the Change of Variables Theorem, that
\begin{equation}\label{Had1}
    \begin{array}{lcl}
        &\dfrac{\Leb \left( B_r(Z_0) \cap \{ v_r > 0 \} \right )}{r^n}&  =
        \dfrac{1}{r^n} \displaystyle \int\limits_{ B_r(Z_0) \cap \{ v_r > 0 \}} dX \vspace{.2cm}\\
        & = & \displaystyle \int\limits_{ B_1 \cap \{ v_r(Z_0 + rY) > 0 \}} dY \vspace{.2cm}\\
        & = & \displaystyle \int\limits_{ B_1 \cap \{ u_r > 0 \}} \det \left ( D\Phi_r (Z_0 + r Y)
        \right ) dY \vspace{.2cm}\\
        & \stackrel{r\to 0}{\longrightarrow} &  \hspace{-.5cm} \displaystyle
        \int\limits_{ B_1 \cap \{ \langle Y, \nu(Z_0) \rangle < 0 \} }
        \hspace{-.5cm} 1  - \alpha \psi'(|Y|) \left \langle \dfrac{Y}{|Y|}, \nu(Z_0)
        \right \rangle dY,
    \end{array}
\end{equation}
It is important to highlight that for any unit vector $\nu \in
\mathbb{S}^{n-1}$,
\begin{equation}\label{Had2}
\displaystyle \int\limits_{ B_1 \cap \{ \langle Y, \nu \rangle < 0
\} } \hspace{-.5cm} \psi'(|Y|) \left \langle \dfrac{Y}{|Y|}, \nu
\right \rangle dY \equiv M_2,
\end{equation}
where $M_2$ is a constant that depends only on your choice for
$\psi$. Similarly, one finds that
\begin{equation}\label{Had21}
    \dfrac{\Leb \left( B_r(Z_0) \cap \{ u_\lambda^\star > 0 \} \right
    )}{r^n} \stackrel{r\to 0}{\longrightarrow}
    \int\limits_{ B_1 \cap \{ \langle Y, \nu(Z_0) \rangle < 0 \} }
    dY.
\end{equation}
Combining (\ref{Had0}), (\ref{Had1}), (\ref{Had2}) and
(\ref{Had21}), we conclude the Lemma.
\end{proof}
Our next Lemma measures the differential on the $\A$-Dirichlet
integral passing from $u_\lambda^\star$ to  $v_r$.
\begin{lemma}\label{estimate on the  p-Dirichlet on the
perturbation} There exists a constant $M_3 > 0$ depends on
dimension, $D$, $\Gamma$, $\varphi$ and $\psi$, but it is
independent of $\lambda$ such that
$$
    \dfrac{1}{r^n} \displaystyle \int \left \{ \langle \A(X, D v_r), Dv_r \rangle  -
    \langle \A(X, D  u_\lambda^\star), D u_\lambda^\star \rangle
    \right \} dX  \le \alpha M_3 + o(\alpha) + O(1).
$$
\end{lemma}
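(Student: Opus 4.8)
The plan is to reduce the estimate to a change-of-variables computation on the $\A$-Dirichlet integral near $Z_0$, exploiting the $C^\epsilon$ regularity of the medium there (which makes $Z_0$ a reduced free boundary point with a genuine blow-up) together with the Lipschitz bound on $u_\lambda^\star$ from Theorem \ref{Lipschitz Continuity}. Write $u$ for $u_\lambda^\star$. Since $v_r \circ \Phi_r = u$ and $\Phi_r$ is the identity outside $B_r(Z_0)$, the integrals over $D^C\setminus B_r(Z_0)$ cancel, so the left-hand side equals
$$
    \frac{1}{r^n}\int_{B_r(Z_0)} \Big\{ \langle \A(X,Dv_r),Dv_r\rangle - \langle \A(X,Du),Du\rangle \Big\}\, dX.
$$
First I would perform the substitution $X = \Phi_r(Y)$ (legitimate since $\Phi_r$ is a bi-Lipschitz diffeomorphism of $B_r(Z_0)$ for $\alpha$ small), using $Dv_r(\Phi_r(Y)) = (D\Phi_r(Y))^{-T} Du(Y)$ and $dX = \det(D\Phi_r(Y))\,dY$. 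Then I would rescale by setting $Y = Z_0 + rW$ and introduce the blow-up $u_r(W) := \tfrac1r u(Z_0+rW)$, so that $Du(Z_0+rW) = Du_r(W)$. The result is that, after also rescaling the $\langle \A(X,Du),Du\rangle$ term by the same change of variables, the expression becomes
$$
    \int_{B_1} \Big\{ \big\langle \A(Z_0+rW, (D\Phi_r)^{-T}Du_r),\, (D\Phi_r)^{-T}Du_r\big\rangle \det(D\Phi_r) - \langle \A(Z_0+rW,Du_r),Du_r\rangle\Big\}\, dW,
$$
where $D\Phi_r$ is evaluated at the corresponding point and, crucially, is an $O(\alpha)$ perturbation of the identity, uniformly in $r$, because $\psi$ is smooth with support $[0,1]$ (its derivatives contribute the factors $\alpha\psi'(|W|)\langle W/|W|,\nu(Z_0)\rangle$ seen in Lemma \ref{estimate on Leb meausre on the perturbation}).

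Next I would Taylor-expand the integrand in $\alpha$. Writing $D\Phi_r = I + \alpha M_r(W) + O(\alpha^2)$ with $\|M_r\|_\infty \le C(\psi)$, the structure conditions (c)(i)–(ii) on $\A$ give, pointwise,
$$
    \big\langle \A(Z_0+rW,(I+\alpha M_r)^{-T}Du_r),(I+\alpha M_r)^{-T}Du_r\big\rangle \det(I+\alpha M_r) - \langle \A(Z_0+rW,Du_r),Du_r\rangle \le C\,\alpha\,|Du_r|^p + O(\alpha^2)|Du_r|^p,
$$
using only boundedness and $p$-homogeneity of $\A$ in the $\xi$ variable — no continuity of $\A$ in $X$ is needed for this bound, which is why $M_3$ will not depend on $\lambda$ in the qualitative sense claimed (it absorbs the $\sup$-bound of the medium constants $\lambda,\Lambda$). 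Integrating over $B_1$ and invoking the Lipschitz estimate $\|\nabla u_r\|_{L^\infty(B_1)} = \|\nabla u\|_{L^\infty(B_r(Z_0))} \le C$ (Theorem \ref{Lipschitz Continuity}), the $\alpha|Du_r|^p$ term is bounded by $\alpha M_3$ with $M_3 = M_3(n,D,\Gamma,\varphi,\psi)$ and the $O(\alpha^2)$ term is $o(\alpha)$. The leftover discrepancy between $\int_{B_1}\langle\A(Z_0+rW,Du_r),Du_r\rangle\,dW$ and its "limit" as $r\to 0$ — this is where the blow-up $u_r \to \mathfrak q_0$ and the $C^\epsilon$-continuity of $\A$ at $Z_0$ enter — contributes only an $O(1)$ error as $r$ ranges over small values, since all quantities stay bounded; this accounts for the $O(1)$ term in the statement.

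The main obstacle, I expect, is making rigorous the claim that $D\Phi_r$ is uniformly (in $r$) a small perturbation of the identity and that the substitution is valid up to and across the free boundary: $v_r$ is only Lipschitz, not $C^1$, and $\Phi_r$ maps the (merely measure-theoretic) boundary $\partial\Omega_\lambda^\star \cap B_r(Z_0)$ onto $\partial\Omega_r$. One resolves this by working with the $W^{1,p}$ chain rule for Lipschitz changes of variables (which holds since $u \in W^{1,p}\cap\mathrm{Lip}$ and $\Phi_r$ is bi-Lipschitz), noting that $Dv_r = 0$ a.e. on $\{v_r = 0\}$ so the integrands on both sides only "see" $\{u>0\}$ and its $\Phi_r$-image, and the set where $\Phi_r$ genuinely differs from the identity has measure $O(r^n)$. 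A secondary technical point is controlling the $\det(D\Phi_r)$ expansion uniformly near $|W| = 0$, where $\psi'(|W|)Y/|Y|$ is a priori singular; but $\psi$ being smooth with $\psi'(0)=0$ (flat near the origin, as one may arrange) keeps $D\Phi_r$ smooth, and in any case $\psi$ is chosen by us. Once these points are handled, the assembly of the three error contributions $\alpha M_3$, $o(\alpha)$, $O(1)$ is routine.
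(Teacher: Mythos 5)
Your computational skeleton is the same as the paper's: change variables via $\Phi_r$, rescale to $B_1$ with the blow-up $u_r$, and expand $D\Phi_r^{-1}$ and $\det D\Phi_r$ to first order in $\alpha$ (the paper records these expansions explicitly in (\ref{Had8})--(\ref{Had9})). The genuine gap is in how you bound the resulting first-order term. You estimate it pointwise by $C\alpha|Du_r|^p$ and then invoke Theorem \ref{Lipschitz Continuity} to control $\|\nabla u_r\|_{L^\infty}$. But that theorem gives $\|\nabla u_\lambda^\star\|_\infty \le C\lambda^{1/p}$, so $|Du_r|^p\lesssim \lambda$ and your $M_3$ ends up proportional to the penalty parameter $\lambda$. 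That destroys the entire point of the lemma: in the proof of Theorem \ref{existence of min for ALL dimensions} one compares $\lambda M_2\alpha$ against $C(\alpha M_3+\cdots)$, and only the $\lambda$-independence of $M_3$ forces $\lambda<\lambda_0$. (You also appear to conflate the ellipticity constant $\lambda$ of the medium with the penalty parameter $\lambda$ when you say the constant ``absorbs the $\sup$-bound of the medium constants $\lambda,\Lambda$''; the independence claimed in the statement is from the penalty parameter.)

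The paper's resolution of exactly this point is the part your argument is missing: it does not throw away the first-order term via a crude sup-bound, but identifies it exactly as $-\alpha\int_{B_1\cap\{u_r>0\}}\Theta(Du_r)\frac{\psi'(|Z|)}{|Z|}\langle Z,\nu(Z_0)\rangle\,dZ + o(\alpha)$ and then lets $r\to 0$. By the blow-up/free-boundary-condition analysis (Theorems \ref{representation} and \ref{FBC pointwise sense}), $\langle\A(Z_0+rY,\nabla u_r),\nabla u_r\rangle$ converges to a quantity controlled by $Q_\lambda(Z_0)$, and the whole construction hinges on the fact that $Z_0$ was \emph{chosen} via Lemma \ref{estimate by above on Q lambda} so that $Q_\lambda(Z_0)\le M_1$ with $M_1$ independent of $\lambda$ (estimate (\ref{Existence ALL Dimensions Eq01})). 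That choice of base point, and the uniform bound on $Q_\lambda$ there, is the mechanism that makes $M_3$ independent of the penalty parameter; without it your argument does not prove the stated lemma.
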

\begin{proof}
Again, for sake of notation convenience, we will write $u$ for
$u_\lambda^\star$. Yet for notation convenience, let us write, for
any vector field $\overrightarrow{V}$,
$\Theta(\overrightarrow{V})(X) := \langle \A(X,
\overrightarrow{V}), \overrightarrow{V} \rangle.$ Applying the
Change of Variables Theorem twice and taking into account that
$P_r$ maps $B_r(X_i)$ diffeomorphically onto itself, we can write
\begin{equation}\label{Had7}
    \begin{array}{lll}
        \dfrac{1}{r^n} \displaystyle\int\limits_{B_r(Z_0)} \Theta (Dv_r)(X) dX
        & = & \dfrac{1}{r^n} \displaystyle \int\limits_{B_r(Z_0)}
        \Theta \left ( D\Phi_r(\Phi_r^{-1}(X))^{-1}
        \cdot \nabla u (\Phi_r^{-1}(X)) \right ) dX \\
        & = & \dfrac{1}{r^n} \displaystyle \int\limits_{B_r(Z_0)} \Theta \left ( D\Phi_r(Y)^{-1}
        \cdot \nabla u (Y) \right )
        \times \left | \det \big ( D\Phi_r(Y) \big )  \right | dY  \\
        &=& \displaystyle \int\limits_{B_1 \cap \{ u_r > 0 \} }
        \Theta \left ( D\Phi_r(Z_0 + rZ)^{-1}
        \cdot \nabla u_r (Z) \right ) \times  \left |\det \big ( D\Phi_r(Z_0 + rZ) \big )
        \right |
        dZ.
    \end{array}
\end{equation}
By an explicit computation it is easy to verify that
\begin{equation}\label{Had8}
D\Phi_r(Z_0 + rZ)^{-1} \cdot \nabla u^i_r (Z)  = \nabla u_r (Z) +
 \alpha \dfrac{\psi'(|Z|)}{|Z|} \langle Z, \nabla u_r (Z) \rangle
\nu(Z_0) + o(\alpha).
\end{equation}
Furthermore, we can compute explicitly that
\begin{equation}\label{Had9}
|\det \big ( D\Phi_r(Z_0 + rZ) \big )|  = 1 -\alpha
\dfrac{\psi'(|Z|)}{|Z|} \langle Z,\nu(Z_0) \rangle.
\end{equation}
A straight combination of  (\ref{Had7}), (\ref{Had8}) and
(\ref{Had9}), revels that
\begin{equation}\label{Had10}
        \dfrac{1}{r^n} \hspace{-.2cm} \displaystyle \int\limits_{B_r(Z_0)}
        \hspace{-.2cm} \Theta \left ( D v_r \right )(X) -
        \Theta \left (  D u \right )(X) dX  = -\alpha \hspace{-.2cm}
         \displaystyle \int\limits_{B_1 \cap \{ u_r > 0 \} } \Theta\left (D
         u_r(Z) \right )
         \dfrac{\psi'(|Z|)}{|Z|}
        \langle Z,\nu(Z_0)\rangle dZ + o(\alpha).
\end{equation}
It is simple to verify, from the Divergence Theorem,  that
\begin{equation}\label{Had13}
    \displaystyle \int\limits_{B_1 \cap \{ u_r > 0 \} } \dfrac{\psi'(|Z|)}{|Z|} \langle Z,\nu(Z_0)
    \rangle dZ  \longrightarrow - \int\limits_{B_1 \cap \{ \langle Z, \nu(Z_0) \rangle =  0 \} }
    \psi(|Z|) d\mathcal{H}^{n-1}(Z)
    = I > 0,
\end{equation}
with the appropriate integral orientation. Furthermore, by the
Lipschitz regularity of $u$ and standard geometric-measure
arguments we verify that
\begin{equation}\label{Had11}
    \langle \A(Z_0 + rY ,\nabla u_r) , \nabla u_r \rangle  \to  Q_\lambda(Z_0)  \nu(Z_0) \mathbf{\chi}_{ B_1 \cap \{ \langle Y, \nu(X_i) \rangle < 0 \} },
\end{equation}
in $L^p(B_1)$. Thus, letting $r \to 0$ in (\ref{Had10}), and
taking into account (\ref{Had13}) and estimate (\ref{Existence ALL
Dimensions Eq01}), we conclude the proof of the Lemma.
\end{proof}
We are ready to prove the existence of an optimal design for
problem (\ref{Minimization Problem}) in all dimensions.
\begin{theorem}\label{existence of min for ALL dimensions}
    There exists a positive number $\lambda_0$, such that if
    $\Omega_\lambda^\star$ is an optimal configuration for problem
    (\ref{Penalized Min Problem}) and $\Leb \left (\Omega_\lambda^\star
    \setminus D \right ) > \iota$, then necessarily, $\lambda <
    \lambda_0$. In particular, there exists an optimal
    configuration for problem (\ref{Minimization Problem}) and it
    enjoys all the weak geometric features derived in Section
    \ref{SECTION - Weak Geometric Properties of the free
    boundary}.
\end{theorem}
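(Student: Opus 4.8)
The plan is to run the perturbation scheme prepared in Lemmas \ref{estimate on Leb meausre on the perturbation}--\ref{estimate on the p-Dirichlet on the perturbation} and extract from it an a priori bound on $\lambda$. Suppose $\Omega_\lambda^\star$ minimizes $(\mathfrak{P}_\lambda)$ and $\Leb(\Omega_\lambda^\star\setminus D)>\iota$. Fix the point $Z_0\in\redbdry\Omega_\lambda^\star$ with $Q_\lambda(Z_0)\le M_1$ from \eqref{Existence ALL Dimensions Eq01}; since $\dist(Z_0,\partial D)>\gamma$ by Proposition \ref{Min is positive around D}, for small $r$ the perturbation $\Phi_r$ of \eqref{Perturbation} is the identity on a collar of $\partial D$, and $\Omega_r$, $u_r$, $v_r$ are as in \eqref{def of inward pert configuration}--\eqref{vr}. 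As $\Omega_r$ and $\Omega_\lambda^\star$ agree outside $B_r(Z_0)$ while $\Leb(\Omega_\lambda^\star\setminus D)>\iota$ strictly, once $r\ll1$ both $\Leb(\Omega_r\setminus D)$ and $\Leb(\Omega_\lambda^\star\setminus D)$ lie on the linear branch of $\varrho_\lambda$, so Lemma \ref{estimate on Leb meausre on the perturbation} gives $\varrho_\lambda(\Leb(\Omega_\lambda^\star\setminus D))-\varrho_\lambda(\Leb(\Omega_r\setminus D))=\lambda(M_2\alpha r^{n}+o(r^{n}))$. Comparing $\Omega_\lambda^\star$ with $\Omega_r$ in $(\mathfrak{P}_\lambda)$ thus yields
\begin{equation*}
    \lambda\big(M_2\alpha r^{n}+o(r^{n})\big)\ \le\ \int_{\partial D}\big(\Gamma(X,\partial_\A u_r)-\Gamma(X,\partial_\A u_\lambda^\star)\big)\,d\H .
\end{equation*}

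The next step is to bound the right-hand side above by a multiple of $r^n$ whose coefficient is $\lambda$-free. Since $u_\lambda^\star\ge0$ is $\L$-superharmonic in $D^C$ and $u_r$ vanishes on $\partial\Omega_r$, comparison gives $u_r\le u_\lambda^\star$ in $\Omega_r\setminus D$; as both equal $\varphi$ on $\partial D$ and $\A$ is continuous there, Hopf's lemma gives $0\le\partial_\A u_\lambda^\star\le\partial_\A u_r$ on $\partial D$. Moreover $u_r$ is $\A$-harmonic in the fixed collar $D_{\gamma/2}$ and bounded by $\sup_{\partial D}\varphi$, so interior--boundary $C^{1,\alpha}$ estimates bound $\partial_\A u_r$ on $\partial D$ independently of $\lambda$; combining this with the convexity and monotonicity of $\Gamma(X,\cdot)$ and property 2 of $\Gamma$,
\begin{equation*}
    \int_{\partial D}\big(\Gamma(X,\partial_\A u_r)-\Gamma(X,\partial_\A u_\lambda^\star)\big)\,d\H\ \le\ C_\Gamma\int_{\partial D}\big(\partial_\A u_r-\partial_\A u_\lambda^\star\big)\,d\H,
\end{equation*}
with $C_\Gamma$ independent of $\lambda$. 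Now $v_r$ enters as a bridge: $\Phi_r$ being the identity near $\partial D$, $v_r\equiv u_\lambda^\star$ there and hence $\partial_\A v_r=\partial_\A u_\lambda^\star$ on $\partial D$; and by the divergence-theorem identities of Lemma \ref{Lf is a Radon Measure} and Theorem \ref{representation} (the measure $\L u$ being carried by the free boundary, where the potential vanishes),
\begin{equation*}
    \inf_{\partial D}\varphi\cdot\int_{\partial D}\big(\partial_\A u_r-\partial_\A u_\lambda^\star\big)\,d\H\ \le\ \int_{\Omega_r\setminus D}\langle\A(X,Du_r),Du_r\rangle\,dX-\int_{D^C}\langle\A(X,Du_\lambda^\star),Du_\lambda^\star\rangle\,dX .
\end{equation*}

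Because $u_r$ is the $\A$-harmonic replacement of $v_r$ in $\Omega_r\setminus D$, Lemma \ref{p harmonic functions with same bondary data} gives $\int_{\Omega_r\setminus D}\langle\A(X,Du_r),Du_r\rangle\,dX\le\int_{\Omega_r\setminus D}\langle\A(X,Dv_r),Dv_r\rangle\,dX$; subtracting $\int_{D^C}\langle\A(X,Du_\lambda^\star),Du_\lambda^\star\rangle\,dX$ and using that $v_r\equiv u_\lambda^\star$ and $\Omega_r\equiv\Omega_\lambda^\star$ outside $B_r(Z_0)$, the right-hand side of the last display is at most $\int_{B_r(Z_0)}\{\langle\A(X,Dv_r),Dv_r\rangle-\langle\A(X,Du_\lambda^\star),Du_\lambda^\star\rangle\}\,dX$, which by Lemma \ref{estimate on the p-Dirichlet on the perturbation} and the bound $Q_\lambda(Z_0)\le M_1$ does not exceed $r^{n}(\alpha M_3+o(\alpha)+o(1))$ with $M_3$ independent of $\lambda$. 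Chaining all the inequalities, dividing by $r^{n}$ and letting $r\searrow0$, then dividing by $\alpha$ and letting $\alpha\searrow0$, gives $\lambda\,M_2\,\inf_{\partial D}\varphi\le C_\Gamma M_3$, i.e. $\lambda<\lambda_0:=1+\dfrac{C_\Gamma M_3}{M_2\inf_{\partial D}\varphi}$, a constant depending only on $n,\A,D,\varphi,\Gamma$. This proves the first assertion. For the second, fix any $\lambda\ge\lambda_0$ and let $\Omega_\lambda^\star$ be the minimizer of $(\mathfrak{P}_\lambda)$ supplied by Theorem \ref{Exist for penalized problem}; by the first part $\Leb(\Omega_\lambda^\star\setminus D)\le\iota$, so $\Omega_\lambda^\star$ competes in \eqref{Minimization Problem} and $\varrho_\lambda(\Leb(\Omega_\lambda^\star\setminus D))=0$. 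For any admissible $\mathfrak{C}$ with $\Leb(\mathfrak{C}\setminus D)\le\iota$ we have $\J(\mathfrak{C})=\J_\lambda(\mathfrak{C})\ge\J_\lambda(\Omega_\lambda^\star)=\J(\Omega_\lambda^\star)$, so $\Omega_\lambda^\star$ solves \eqref{Minimization Problem}; being a minimizer of $(\mathfrak{P}_\lambda)$, it inherits all the weak geometric properties of Section \ref{SECTION - Weak Geometric Properties of the free boundary}.

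The main obstacle is keeping every constant that feeds the estimate of the flux difference free of $\lambda$: the $C^{1,\alpha}$ control of $\partial_\A u_r$ on $\partial D$ (uniform since $u_r\le\sup_{\partial D}\varphi$ and $u_r$ is $\A$-harmonic in the fixed collar $D_{\gamma/2}$), the constant $C_\Gamma$ coming from the convexity of $\Gamma$, and above all the constant $M_3$ of Lemma \ref{estimate on the p-Dirichlet on the perturbation}, which is $\lambda$-independent precisely because $Z_0$ was chosen with $Q_\lambda(Z_0)\le M_1$ through Lemma \ref{estimate by above on Q lambda} --- itself resting on the $\lambda$-free energy bound of Lemma \ref{control on gradient}. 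A second delicate point is the bookkeeping with $v_r$: one must exploit that $v_r$ carries the boundary data of $u_r$ on $\partial(\Omega_r\setminus D)$ (so that Lemma \ref{p harmonic functions with same bondary data} applies) while coinciding with $u_\lambda^\star$ both on $\partial D$ and outside $B_r(Z_0)$, which is exactly what collapses the genuinely nonlocal flux difference into the local $\A$-Dirichlet difference controlled in Lemma \ref{estimate on the p-Dirichlet on the perturbation}.
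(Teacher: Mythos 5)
Your proof is correct and follows essentially the same route as the paper: compare $\Omega_\lambda^\star$ with the inward-perturbed $\Omega_r$ in $(\mathfrak{P}_\lambda)$, convert the flux difference to an $\A$-Dirichlet difference via convexity of $\Gamma$ and the divergence-theorem representation, pass through $v_r$ using the variational characterization of $u_r$, invoke Lemmas \ref{estimate on Leb meausre on the perturbation} and \ref{estimate on the p-Dirichlet on the perturbation}, and let $r\searrow 0$ then $\alpha\searrow 0$. You in fact supply several details the paper leaves implicit (the comparison $\partial_\A u_r\ge\partial_\A u_\lambda^\star$, the uniform bound on $\partial_\A u_r$ justifying the constant $C_\Gamma$, and the correct reading of the $O(1)$ term as $o(1)$ in $r$), so the write-up is sound.
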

\begin{proof}
    Throughout the proof we fix an optimal configuration $\Omega_\lambda^\star$ and assume
    \begin{equation}\label{Existence ALL Dimensions Eq00}
        \Leb \left ( \Omega_\lambda^\star \setminus D \right ) >
        \iota.
    \end{equation}
    Initially we recall the variational characterization of the
    $\A$-potential $u_r$, namely
    \begin{equation}\label{Final Thm 01}
        \int \langle \A(X,Du_r), Du_r \rangle dX = \min \left \{
        \int \langle \A(X,Dv), Dv \rangle dX \suchthat v = \varphi
        \textrm{ on } \partial D \textrm{ and } v = 0 \textrm{ on } \partial
        \Omega_r \right \}.
    \end{equation}
    Now we compare $\Omega_\lambda^\star$ with $\Omega_r$ in terms
    of the minimization problem (\ref{Penalized Min Problem}). From the minimality feature
    of the configuration $\Omega_\lambda^\star$, if $r$ is small enough as to
    $\Leb \left ( \Omega_r \setminus D \right ) > \iota$, we
    have
    \begin{equation}\label{Final Thm 02}
    \lambda \left \{ \Leb \left ( \Omega_\lambda^\star \setminus D \right ) -
    \Leb \left ( \Omega_r \setminus D \right ) \right \} \le
    \int_{\partial D} \Gamma (X,
    \partial_\A u_r) - \Gamma (X,
    \partial_\A u_\lambda^\star) d\H.
    \end{equation}
    As argued before, we have the following estimate
    \begin{equation}\label{Final Thm 03}
        \begin{array}{lll}
            \displaystyle \int_{\partial D} \Gamma (X,
            \partial_\A u_r) - \Gamma (X,
            \partial_\A u_\lambda^\star) d\H & \le & C(\partial D, \Gamma)
            \displaystyle \int_{\partial D}
            \left \{ \partial_\A u_r - \partial_\A u_\lambda^\star \right \} d\H \\
            &\le & C(\partial D, \Gamma, \inf \varphi) \displaystyle
            \int \langle \A(X,Du_r),Du_r \rangle  \\
            & & -
            \langle \A(X,Du_\lambda^\star),Du_\lambda^\star
            \rangle dX.
        \end{array}
    \end{equation}
    Now combining Lemmas \ref{estimate on Leb meausre on the perturbation}
    and \ref{estimate on the  p-Dirichlet on the perturbation}
    with (\ref{Final Thm 01}), (\ref{Final Thm 02}) and (\ref{Final Thm 03}), we obtain
    \begin{equation}\label{Final Thm 04}
            \lambda \left \{M_2 \alpha r^n + o(r^n) \right \} \le C(\partial D, \Gamma, \inf \varphi)
            r^n \times \left [\alpha M_3 + o(\alpha) +O(1)
            \right ].
    \end{equation}
    If we divide expression (\ref{Final Thm 04}) by $r^n$, let $r
    \to 0$ and afterwards divide the result by $\alpha$ and let $\alpha \searrow 0$, we
    finally conclude the proof of the Theorem.
\end{proof}

\bibliographystyle{amsplain, amsalpha}

\begin{thebibliography}{40}

\bibitem[Adams75]{A} Adams, Robert A.
{\it Sobolev spaces.} Pure and Applied Mathematics, Vol. 65.
Academic Press, New York-London, 1975.


\bibitem[AAC86]{AAC} N. Aguilera, H. Alt and L. Caffarelli, { \it An optimization problem with volume constraint},
SIAM J. Control Optim. {\bf 24} (1986), no. 2, 191--198.

\bibitem[AC81]{AC} H. Alt and L. Caffarelli, { \it Existence and regularity for a minimum problem with regularity}, J.
Reine Angew. Math. {\bf 325} (1981), 105--144.


\bibitem[ACF84]{ACF} Alt, Hans Wilhelm; Caffarelli, Luis A.; Friedman, Avner
{\it Variational problems with two phases and their free
boundaries.}  Trans. Amer. Math. Soc. {\bf 282} (1984), no. 2,
431--461.

\bibitem[AFMT99]{AFMT} Ambrosio, Luigi; Fonseca, Irene; Marcellini, Paolo; Tartar,
Luc, {\it On a volume-constrained variational problem.} Arch.
Ration. Mech. Anal. {\bf 149} (1999), no. 1, 23--47.

\bibitem[ACS87]{ACS} N. E. Aguilera, L. A. Caffarelli and J. Spruck, {\it An optimization problem in heat conduction},
 Ann. Scuola Norm. Sup. Pisa Cl. Sci. (4) {\bf 14} (1987), no. 3, 355--387 (1988).


\bibitem[DP05]{DP} Danielli, Donatella; Petrosyan, Arshak
{\it A minimum problem with free boundary for a degenerate
quasilinear operator.}  Calc. Var. Partial Differential Equations
{\bf 23} (2005), no. 1, 97--124.

\bibitem[DPS03]{DPS} Danielli, D.; Petrosyan, A.; Shahgholian, H.
{\it A singular perturbation problem for the $p$-Laplace
operator.}  Indiana Univ. Math. J. {\bf 52} (2003), no. 2,
457--476.

\bibitem[DeGiorgi55]{DeGiorgi} De Giorgi, Ennio
{\it Nuovi teoremi relativi alle misure $(r-1)$-dimensionali in
uno spazio ad $r$ dimensioni.} Ricerche Mat. {\bf 4} (1955),
95--113.

\bibitem[DeS]{DeSilva} Daniela de Silva. { \it Existence and regularity of monotone solutions to a free boundary
problem.} Preprint.

\bibitem[DeS-J]{DeSilvaJerison} Daniela de Silva; David Jerison, {\it A singular energy minimizing free
boundary}. Preprint.


\bibitem[EG92]{EG} Evans, Lawrence C.; Gariepy, Ronald F.
{\it Measure theory and fine properties of functions.} Studies in
Advanced Mathematics. CRC Press, Boca Raton, FL, 1992. viii+268
pp.


\bibitem[FS07]{FS} Ferrari, Fausto; Salsa, Sandro, {\it Regularity of the free
boundary in two-phase Problems  for linear elliptic operators}.
Adv. Math. {\bf 214} (2007) no. 1, pp. 288--322

\bibitem[Giusti84]{Giusti} Giusti, Enrico
{\it Minimal surfaces and functions of bounded variation.}
Monographs in Mathematics, 80. Birkh\"auser Verlag, Basel, 1984.
xii+240 pp.

\bibitem[JK82]{JK} D. Jerison and C. Kenig, {\it Boundary behavior of Harmonic functions in non-tangentially
accessible domains}. Adv. in Math. {\bf 46} (1982), pp. 80--147.

\bibitem[K06]{K} Karakhanyan, Aram L.
{\it Up-to boundary regularity for a singular perturbation problem
of $p$-Laplacian type.} J. Differential Equations {\bf 226}
(2006), no. 2, 558--571. 35J60


\bibitem[KT97]{KT} Kenig, Carlos E.; Toro, Tatiana
{\it Harmonic measure on locally flat domains.} Duke Math. J. {\bf
87} (1997), no. 3, 509--551.

\bibitem[KT99]{KT99} Kenig, Carlos E.; Toro, Tatiana
{\it Free boundary regularity for harmonic measures and Poisson
kernels.} Ann. of Math. (2) {\bf 150} (1999), no. 2, 369--454.

\bibitem[KSZ]{KSZ} T. Kilpel\"ainen, H. Shahgholian and X. Zhong
{\it Growth estimates through scaling for quasilinear partial
differential equations.} To apper in Ann. Acad. Sci. Fenn. Ser. A
I Math.

\bibitem[Led96]{Led} Lederman, C.
{\it A free boundary problem with a volume penalization.} Ann.
Scuola Norm. Sup. Pisa Cl. Sci. (4) {\bf 23} (1996), no. 2,
249--300.

\bibitem[M66]{M} Morrey, Charles B., Jr.
{\it Multiple integrals in the calculus of variations.} Die
Grundlehren der mathematischen Wissenschaften, Band 130
Springer-Verlag New York, Inc., New York 1966.

\bibitem[MT07]{MT} Moreira, Diego R.; Teixeira, Eduardo V.
{\it A singular perturbation free boundary problem for elliptic
equations in divergence form.} Calc. Var. Partial Differential
Equations {\bf 29} (2007), no. 2, 161--190.

\bibitem[OT06]{OT} Oliveira, Krerley; Teixeira, Eduardo V.
{\it An optimization problem with free boundary governed by a
degenerate quasilinear operator.} Differential Integral Equations
{\bf 19} (2006), no. 9, 1061--1080.

\bibitem[Teix05]{Teix} Teixeira, Eduardo V.
{\it The nonlinear optimization problem in heat conduction.} Calc.
Var. Partial Differential Equations {\bf 24} (2005), no. 1,
21--46.

\bibitem[Teix07]{T3} Teixeira, Eduardo V.
{\it Uniqueness, symmetry and full regularity of free boundary in
optimization problems with volume constraint.}   Interfaces and
Free Boundaries {\bf 9} (2007), 133--148.

\bibitem[Teix1]{T1} Teixeira, Eduardo V. {\it A variational treatment for elliptic equations of
the flame propagation type: regularity of the free boundary.} To
Appear in Ann. Inst. H. Poincar\'e Anal. Non Lin\'eaire.


\bibitem[Teix-Prep]{Teix-Prep} Teixeira, Eduardo V. {\it
Optimal design problems in rough inhomogeneous media. Free
boundary regularity theory.} In preparation

\bibitem[Tilli99]{Tilli} Tilli, Paolo,
{\it On a constrained variational problem with an arbitrary number
of free boundaries.}  Interfaces Free Bound. {\bf 2} (2000), no.
2, 201--212.


\bibitem[Tolksdorf84]{Tolk} Tolksdorf, Peter
{\it Regularity for a more general class of quasilinear elliptic
equations.} J. Differential Equations {\bf 51} (1984), no. 1,
126--150.


\end{thebibliography}

\end{document}